\newcommand{\dimension}{n}
\newcommand{\ri}{\ensuremath{\rightarrow \infty}}
\newcommand{\dist}{\operatorname{dist}}
\newcommand{\half}{\frac{1}{2}}
\newcommand{\supp}{\operatorname{supp}}
\newcommand{\dx}{\;dx}
\renewcommand{\d}{\;d}
\newcommand{\ha}[1]{\frac{#1}{2}}
\newcommand{\ov}[1]{\frac{1}{#1}}
\newcommand{\expl}[2]{{\stackrel{\text{#1}}{#2}}}
\newcommand{\abs}[1]{\left|#1\right|}
\newcommand{\pth}[1]{\left(#1\right)}
\newcommand{\set}[1]{{\left\{#1\right\}}}
\newcommand{\ang}[1]{{\left\langle#1\right\rangle}}
\newcommand{\cl}[1]{\overline{#1}}
\newcommand{\no}[1]{\left\|#1\right\|}
\newcommand{\al}{\ensuremath{\alpha}}
\newcommand{\de}{\ensuremath{\delta}}
\newcommand{\vp}{\ensuremath{\varphi}}
\newcommand{\la}{\ensuremath{\lambda}}
\newcommand{\e}{\ensuremath{\varepsilon}}
\newcommand{\ve}{\ensuremath{\varepsilon}}
\newcommand{\Om}{\ensuremath{\Omega}}
\newcommand{\om}{\ensuremath{\omega}}
\newcommand{\R}{\ensuremath{\mathbb{R}}}
\newcommand{\Rd}{\ensuremath{{\mathbb{R}^{\dimension}}}}
\newcommand{\Rn}{\ensuremath{{\mathbb{R}^n}}}
\newcommand{\limto}[1]{\lim_{{#1} \rightarrow \infty}}
\newcommand{\bcs}{\begin{cases}}
\newcommand{\ecs}{\end{cases}}
\newcommand{\seq}[2]{\ensuremath{\{{#1}_{#2}\}_{{#2} = 1}^{\infty}}}
\newcommand{\pd}[2]{\frac{\partial {#1}}{\partial {#2}}}
\numberwithin{equation}{section}
\theoremstyle{definition}
\newtheorem{theorem}{Theorem}[section]
\newtheorem{lemma}[theorem]{Lemma}
\newtheorem{proposition}[theorem]{Proposition}
\newtheorem{definition}[theorem]{Definition}
\theoremstyle{definition}
\newtheorem{remark}[theorem]{Remark}
\newtheorem*{notation}{Notation}
\begin{document}

\baselineskip=17pt

\title{Long-time behavior of a Hele-Shaw type problem in random media}
\author{Norbert Po\v z\'ar\\
UCLA Department of Mathematics\\
Box 951555\\ Los Angeles, CA 90095-1555\\npozar@math.ucla.edu}
\maketitle

\abstract{We study the long-time behavior of an exterior Hele-Shaw problem in random media with a free boundary velocity that depends on position in dimensions $n \geq 2$. A natural rescaling of solutions that is compatible with the evolution of the free boundary leads to homogenization of the free boundary velocity. By studying a limit obstacle problem for a Hele-Shaw system with a point source, we are able to show uniform convergence of the rescaled solution to a self-similar limit profile and we deduce that the rescaled free boundary uniformly approaches a sphere.
}

\section{Introduction}

We consider a Hele-Shaw type problem (HS) in random media and study long-time behavior of the solutions. Let $n \geq 2$ and let $K \subset \Omega_0 \subset \Rd$, $K$ be a nonempty compact set and $\Om_0$ be a bounded open set such that $K$ and $\Omega_0$ have smooth boundaries (see Figure~\ref{fig:domain}).
\begin{figure}[t]
\centering
\includegraphics{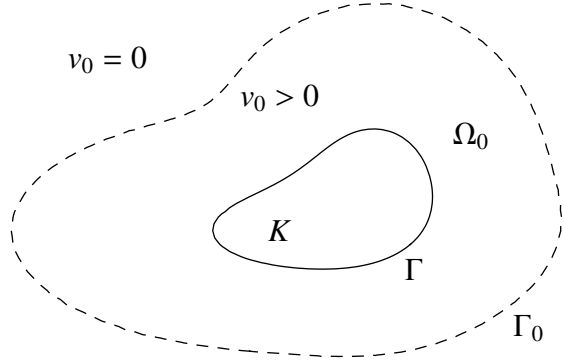}
\caption{Initial configuration}
\label{fig:domain}
\end{figure}
 Let $\Gamma = \partial K$ and $\Gamma_0 = \partial \Om_0$.
Formally, the Hele-Shaw-type problem is to find a function $v(x,t) : \Rd \times [0, \infty) \to \R$ that solves
\begin{equation}
\label{HSL}
\tag{HS}
\left\{
\begin{aligned} 
-\Delta v &= 0 && \text{in } \set{v > 0} \setminus K,\\
v &= 1 && \text{on } K,\\
v_t &= g(x, \om) |Dv|^2 && \text{on } \partial \set{v > 0},\\
v(\cdot, 0) &= v_0 && \text{on } \Omega_0 \setminus K, 
\end{aligned}
\right.
\end{equation}
where the initial data $v_0(x) : \Rd \to \R$ is the solution to
$$
\left\{
\begin{aligned} 
-\Delta v_0 &= 0 && \text{in } \Om_0 \setminus K,\\
v_0 &= 1 && \text{on } K,\\
v_0 &= 0 && \text{on } \partial \Omega_0.
\end{aligned}
\right.
$$
Here $D v$ and $\Delta v$ are, respectively, the gradient and Laplacian of $v$, with respect to the space variable $x$, and $v_t$ is the partial derivative of $v$ with respect to the time variable $t$. 

Let $(A, \mathcal F, \mu)$ be a probability space. We consider $g(x, \om) : \Rd \times A \to \R$ a continuous function in $x$ for a.e. $\om \in A$, satisfying  
\begin{equation}
\label{boundong}
0 < m \leq g(x,\om) \leq M \qquad \text{for all } x\in \Rd, \text{ a.e. } \om \in A,
\end{equation}
where $m$ and $M$ are positive constants. Furthermore, in order to observe some averaging behavior as $t \to \infty$, we assume that $g$ is \emph{stationary ergodic}. In other words, we assume that for every $x \in \Rd$ there exists an ergodic transformation $\tau_x: A \to A$ such that
\begin{align}
\label{statergodicg}
g(x + x', \om) = g(x', \tau_{x} \om) \qquad \text{for all $x,\, x' \in \Rd$ and a.e. $\om \in A$.}
\end{align}
We say that a measure preserving transformation $\tau_x$ is ergodic if the only invariant sets are either zero or full measure, that is, if $B \subset A$ such that $\tau_x(B) = B$ for all $x \in \Rd$, then $\mu(B) = 0$ or $1$.
For a more detailed discussion on stationary ergodic media, see for instance \cites{PV, CSW}.
 
The classical Hele-Shaw problem in two dimensions with $g \equiv 1$ was introduced in \cite{HS} modelling a slow movement of a viscous fluid injected in between two parallel plates close to each other that form the so called Hele-Shaw cell. This problem naturally generalizes to all dimensions $n \geq 1$.  

We study a Hele-Shaw type problem \eqref{HSL} describing a pressure-driven motion of a fluid in an inhomogeneous random medium where the velocity law of the fluid on the free boundary depends on the position. The homogenization of this problem was recently studied in \cite{KM}. Free boundary problems with similar  velocity laws have various applications in the plastics industry \cite{Richardson}, in electromechanical machining \cite{MG} and others. In fact, Hele-Shaw problem can be thought of as a quasi-stationary limit of the Stefan problem with a similar boundary velocity law, modelling heat transfer, see \cites{P, Rou}. 

In this paper, we are concerned with the behavior of the solution for large times. The main result, Theorem~\ref{uniformConvergenceOfGamma}, is the locally uniform convergence of a rescaled viscosity solution of the inhomogeneous problem \eqref{HSL}, 
\begin{align*}
v^\la(x,t) = \la^{(n-2)/n} v(\la^{1/n} x, \la t), \qquad \text{if $n \geq 3$},
\end{align*}
 (see section \ref{rescalling} for discussion of the rescaling and the case $n = 2$), which then satisfies 
\begin{align*}
v^\la_t = g(\la^{1/n} x, \om) \abs{Dv^\la}^2 \qquad \text{on $\partial \set{v^\la > 0}$}, 
\end{align*}
and its free boundary as $\la \to \infty$ to the self-similar solution of a Hele-Shaw problem with a point source, formally
\begin{equation}
\left\{
\begin{aligned} 
-\Delta v &= C \de && \text{in } \set{v > 0},\\
v_t &= \ov{\ang{\ov g}}|Dv|^2 && \text{on } \partial \set{v > 0},\\
v(\cdot, 0) &= 0 
\end{aligned}
\right.
\end{equation}
Here $\de$ is the Dirac $\de$-function with mass at the origin and the constant $C$ depends on $K$ and $n$. Constant $\ang{\ov{g}}$ represents the ``average'' of $\ov g$ and it will be properly defined later. 

The following heuristics can help with understanding the boundary velocity of the limit problem. For the moment, we suppose that $g$ is periodic and the solution $v$ is sufficiently smooth. Since the free boundary $\partial \set{v > 0}$ is a level set of $v$, its normal  velocity is given by $V = \frac{v_t}{\abs{Dv}}$. If we interpret \eqref{HSL} as a quasi-stationary limit of the one-phase Stefan problem where $v$ is the temperature and $-Dv$ is the heat flux, the quantity $\ov g$ represents the latent heat of phase transition depending on position (see \cite{P}). Indeed, for the free boundary to expand by $dx$, the heat flux must deliver $\ov{g} dx$ of heat into the interface. Since it delivers exactly $\abs{Dv} dt$ of heat in time $dt$, we have
\begin{align}
\label{heat}
V \sim \frac{dx}{dt} = \ov{\ov{g(x)}} \abs{Dv} = g(x) \abs{Dv}.
\end{align}
When we rescale the problem, the latent heat will simply average out and therefore
\begin{align*}
V = \ov{\ang{\ov{g}}} \abs{Dv}
\end{align*}
is the natural free boundary velocity of the limit solution.

 A similar result to ours was obtained in \cite{QV} for weak solutions of the homogeneous Hele-Shaw problem with free boundary velocity
\begin{align*}
V = \abs{D v}.
\end{align*}
  In the current situation, however, the velocity law of the free boundary depends on the position and therefore the techniques from \cite{QV} can provide only lower and upper bounds on the free boundary radius. This requires us to use a more refined method to prove the convergence of the solution to the self-similar asymptotic profile. We combine the strengths of two notions of solutions of \eqref{HSL} -- viscosity and weak -- using their correspondence studied in \cite{KM}. 

The weak solution $u(x,t) : \Rd \times [0, \infty) \to \R$ was introduced in \cite{EJ} by applying the Baiocchi transform 
\begin{align*}
u(x,t) = \int_0^t v(x,s) \d s.
\end{align*}
The function $u$ formally solves the Euler-Lagrange equation
\begin{align}
\label{eq:formalEL}
\left\{
\begin{aligned} 
-\Delta u &= -\ov{g(x, \om)} \chi_{\Rd \setminus \Omega_0} && \text{in } \set{u > 0},\\
u &= \abs{Du} = 0  && \text{on } \partial \set{u > 0},\\
u &= t && \text{on } K
\end{aligned}
\right.
\end{align}
of some obstacle problem. Since $g$ satisfies \eqref{boundong} and \eqref{statergodicg}, the subadditive ergodic theorem implies that there is a constant, denoted $\ang{\ov g}$, such that
\begin{align}
\label{convergenceofgtomean}
\int_\Rd \ov {g(\la^{1/n} x)} u(x) \dx \to \int_\Rn \ang{\ov g} u(x) \dx \qquad \text{for a.e. $\om \in A$}
\end{align}
as $\la \to \infty$.
This then leads to homogenization of the obstacle problem \eqref{eq:formalEL}.

While  in \cite{KM} the uniform convergence of the weak solutions in the homogenization limit follows more directly from the regularity of the obstacle problem \eqref{eq:formalEL}, the current situation is considerably different. As the fixed domain $K$ shrinks to the origin due to the rescaling, the rescaled weak solution gets singular. 
Thus the convergence of weak solutions as $\la \to \infty$ is restricted to a locally uniform convergence on compact sets not containing the origin and the obstacle problem regularity cannot be used directly.
We resolve this difficulty by introducing a limit obstacle problem and studying its wellposedness. The main challenge is posed by the singular behavior of the solution as $\abs{x} \to 0$. From this development, we can deduce the necessary local uniform convergence away from the origin.  As the last step, we extend the applicability of the tools developed in \cite{KM} to our situation with a singularity at the origin. This allows us to conclude that the rescaled free boundary converges uniformly to the free boundary of the asymptotic profile, a sphere. 

\vspace{0.1in}
The current method requires $g(x,\om)$ to be strictly positive, the main reason being the loss of uniqueness of viscosity solutions if $g$ becomes zero and the free boundary can stop. It is an interesting open problem if $g$ can reach zero yet $\ov g \in L^1_{loc}$ so that \eqref{convergenceofgtomean} still makes sense. It is currently under investigation by the author. The interpretation \eqref{heat} of \eqref{HSL} in terms of heat balance suggests that the homogenized velocity law $V = \ang{1/g}^{-1} \abs{Dv}$ continues to hold, at least for maximal viscosity solutions.

\vspace{0.1in}
A brief outline follows: Section 2 summarizes the two notions of solutions for problem \eqref{HSL} and their correspondence. In section 3, we introduce the rescaling of \eqref{HSL}, which is then followed by an investigation of radially symmetric test functions of section 4. The main contribution of the paper is then contained in sections 5 and 6, where we first define and study the limit obstacle problem and then show that the rescaled weak solutions of \eqref{HSL} converge to the unique solution of the limit obstacle problem. Finally, equipped with those results in section 7, we turn back to the viscosity notion to prove the uniform convergence of free boundaries to those of the limit problem.

\section{Hele-Shaw type problem}

\begin{notation}
 Since the arguments are independent of $\om$, we fix $\om \in A$ for which all \eqref{boundong}, \eqref{statergodicg} and \eqref{convergenceofgtomean} hold and we omit $\om$ in the rest of the paper. 

Throughout the article, we will make use of the standard notation for the bilinear form $a_\Omega(\cdot,\cdot)$ on $H^1(\Omega)$ and the scalar product $\ang{\cdot,\cdot}_\Omega$ on $L^2(\Omega)$ for some domain $\Omega$,
\begin{align*}
a_\Omega(u,v) = \int_\Omega Du \cdot Dv \dx, \qquad \ang{u, v}_\Omega = \int_\Omega uv \dx.
\end{align*}
Whenever $\Omega = \Rn$, we drop the subscript $\Omega$.

For a set $A$, we denote $A^c$ its complement. Given a nonnegative function $v$, we respectively define the positivity set of $v$ and the free boundary of $v$,
\begin{align*}
\Omega(v) := \set{(x,t) : v(x,t) > 0}, \qquad \Gamma(v) := \partial \Omega(v),
\end{align*}
and their time-slices
\begin{align*}
\Omega_t(v) := \set{x : v(x,t) > 0}, \qquad \Gamma_t(v) := \partial \Omega_t(v).
\end{align*}

$(f)_+$ will be the positive part of $f$, i.e. 
\begin{align*}
(f)_+ = \max \set{f, 0}.
\end{align*}
\end{notation}

Lastly, $B_r(x)$ is the open space ball of radius $r$ centered at $x$, 
\begin{align*}
B_r(x) = \set{y \in \Rn: \abs{y - x} < r}.
\end{align*}

This section reviews the two notion of solutions of \eqref{HSL} that will be used in this paper.

The notion of \emph{viscosity solutions} for \eqref{HSL} was introduced in \cite{Kim07} and used in \cite{KM} to prove uniform convergence of the free boundary in the homogenization limit. We will consider solutions on the spacetime cylinder $Q = (\Rd \setminus K) \times [0, \infty)$.

\begin{definition}
\label{def:viscositySubsolution}
A nonnegative upper-semicontinuous function $v$ defined in $Q$ is a \emph{viscosity subsolution} of \eqref{HSL} if
\begin{enumerate}
\item for each $T > 0$, the set $\cl{\Omega(v)} \cap \set{t \leq T} \cap Q$ is bounded, and
\item for every $\phi \in C^{2,1}_{x,t}(Q)$ such that $v - \phi$ has a local maximum in $\cl{\Omega(v)} \cap \set{t \leq t_0} \cap Q$ at $(x_0, t_0)$, the following two conditions hold:
\begin{enumerate}
\item if $v(x_0,t_0) > 0$ then $- \Delta \phi(x_0, t_0) \leq 0$,
\item if $(x_0, t_0) \in \Gamma(v)$, $\abs{D\phi}(x_0, t_0) \neq 0$ and $- \Delta \phi (x_0,t_0) > 0$, then 
\begin{align*}
(\phi_t - g(x_0) \abs{D\phi}^2)(x_0,t_0) \leq 0.
\end{align*}
\end{enumerate}
\end{enumerate}
\end{definition}

\begin{definition}
A nonnegative lower-semicontinuous function $v$ defined in $Q$ is a \emph{viscosity supersolution} of \eqref{HSL} if
for every $\phi \in C^{2,1}_{x,t}(Q)$ such that $v - \phi$ has a local minimum in $\cl{\Omega(v)} \cap \set{t \leq t_0} \cap Q$ at $(x_0, t_0)$, the following two conditions hold:
\begin{enumerate}
\item if $v(x_0,t_0) > 0$ then $- \Delta \phi(x_0, t_0) \geq 0$,
\item if $(x_0, t_0) \in \Gamma(v)$, $\abs{D\phi}(x_0, t_0) \neq 0$ and $- \Delta \phi (x_0,t_0) < 0$, then 
\begin{align*}
(\phi_t - g(x_0) \abs{D\phi}^2)(x_0,t_0) \geq 0.
\end{align*}
\end{enumerate}
\end{definition}

Now we can define viscosity sub- and supersolutions with appropriate initial and boundary data.
\begin{definition}
A viscosity subsolution $v$ of \eqref{HSL} in $Q$ is a \emph{viscosity subsolution of \eqref{HSL} in $Q$ with initial data $v_0$ and boundary data $1$} if 
\begin{enumerate}
\item  $v$ is upper-semicontinuous in $\cl{Q}$, $v = v_0$ at $t = 0$ and $v \leq 1$ on $\Gamma$,
\item $\cl{\Omega(v)} \cap \set{t = 0} = \cl{\set{x : v_0(x) > 0}}$.
\end{enumerate} 
\end{definition}

\begin{definition}
A viscosity supersolution $v$ of \eqref{HSL} in $Q$ is a \emph{viscosity supersolution of \eqref{HSL} in $Q$ with initial data $v_0$ and boundary data $1$} if 
\begin{enumerate}
\item  $v$ is lower-semicontinuous in $\cl{Q}$, $v = v_0$ at $t = 0$ and $v \geq 1$ on $\Gamma$,
\end{enumerate}
\end{definition}

Finally, we can define a viscosity solution:
\begin{definition}
A viscosity supersolution $v$ of \eqref{HSL} in $Q$ (with initial data $v_0$ and boundary data $1$) is a \emph{viscosity solution of \eqref{HSL} in $Q$ (with initial data $v_0$ and boundary data $1$)} if 
\begin{align*}
v^\star(x,t) := \limsup_{(y,s)\to (x,t)} v(y,s)
\end{align*}
is a viscosity subsolution of \eqref{HSL} in $Q$ (with initial data $v_0$ and boundary data $1$).
\end{definition}

The following comparison principle is due to \cite{Kim07}:
\begin{theorem}[\cite{Kim07}*{Theorem 1.7}]
Let $v_1$ and $v_2$ be respectively viscosity sub- and supersolution of \eqref{HSL}. If $\cl {\set{v_1(\cdot, 0) > 0}}$ is bounded, $v_1(x, 0) < v_2(x,0)$ in $\set{v_1(\cdot, 0) > 0}$ and $v_1 < v_2$ on $\partial K \times [0, \infty)$, then 
\begin{align*}
v_1 \leq v_2  \qquad \text{on $K^c \times [0, \infty)$}.
\end{align*}
\end{theorem}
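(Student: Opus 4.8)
\medskip
\noindent\emph{Proof proposal.} The plan is to follow the free boundary comparison scheme of \cite{Kim07}: argue by contradiction, pass to the first time the positive phases of $v_1$ and $v_2$ come into contact, and use the free boundary velocity law to rule this out --- after first perturbing $v_1$ into a \emph{strict} subsolution so that the contact point yields a genuine, rather than borderline, inequality.

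First I would reduce to strictly separated phases. Since $\cl{\set{v_1(\cdot,0) > 0}}$ is compact, $v_1(\cdot,0) < v_2(\cdot,0)$ there, and $v_1 < v_2$ on $\p K$, there is $r_0 > 0$ with $\Omega_0(v_1)$ lying at distance at least $r_0$ inside $\Omega_0(v_2)$. I would then replace $v_1$ by the spatial sup-convolution $Z(x,t) = \sup_{\abs{z} \le r(t)} v_1(x + z, t)$, with $r(\cdot)$ decreasing from $r_0$ at a rate large enough (controlled only through the bound $g \le M$ and the continuity of $g$, used to absorb the mismatch between $g(\cdot)$ and $g(\cdot + z)$) that $Z$ is again a subsolution of \eqref{HSL}, and in fact a \emph{strict} one at its free boundary. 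One checks $Z \ge v_1$, $Z \le v_2$ at $t=0$, and $Z < v_2$ on $\p K$. If $Z \le v_2$ on $K^c \times [0,\infty)$ for every such (small $r_0$) convolution, then letting $r_0 \downarrow 0$ proves the theorem; so suppose not, and set $t_0$ to be the supremum of times $t$ for which $\cl{\Omega_s(Z)} \cap K^c \subset \Omega_s(v_2) \cup \Gamma$ for all $s \le t$. By construction $t_0 > 0$, and by Definition~\ref{def:viscositySubsolution}(1) it is finite; upper/lower semicontinuity then produces a contact point $(x_0, t_0) \in K^c \times (0,\infty)$, necessarily away from $\p K$, at which $\cl{\Omega_{t_0}(Z)}$ touches $\Gamma_{t_0}(v_2)$ from inside with $Z(x_0,t_0) = v_2(x_0,t_0)$.

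There are two cases at $(x_0,t_0)$. If $v_2(x_0,t_0) > 0$, then $x_0$ is interior to both positive sets, and testing the elliptic conditions (condition (a) of the two definitions) against quadratics shows $w := Z - v_2$ is subharmonic near $x_0$ at time $t_0$ with an interior maximum value $0$; the strong maximum principle forces $w \equiv 0$ nearby, which, propagated backward in time, contradicts the minimality of $t_0$ (using strictness of $Z$). The essential case is $(x_0,t_0) \in \Gamma_{t_0}(Z) \cap \Gamma_{t_0}(v_2)$. Here I would first also regularize $v_2$ by an inf-convolution so that $Z$ carries an interior and $v_2$ an exterior space--time ball at $(x_0, t_0)$; the two free boundaries are then squeezed between translates of a fixed paraboloid, yielding an admissible $\phi \in C^{2,1}_{x,t}$ with $D\phi(x_0,t_0) \neq 0$ that touches $Z$ from above and $v_2$ from below and for which $-\Delta\phi(x_0,t_0) > 0$ can be arranged. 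The subsolution condition gives $\phi_t \le g(x_0)\abs{D\phi}^2$ and the supersolution condition the reverse at $(x_0,t_0)$; since at a first inside-touching the normal velocity of $\Gamma(Z)$ at $x_0$ is at least that of $\Gamma(v_2)$ while, by a Hopf-lemma argument, $\abs{D\phi}$ reflects a no-larger gradient for $Z$, the strictness of $Z$ from the first step upgrades this to $\phi_t < g(x_0)\abs{D\phi}^2$, a contradiction. Note that continuity of $g$ in $x$ is exactly what permits evaluating $g$ at the single point $x_0$ for both inequalities.

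The main obstacle is this last step --- and within it, the simultaneous regularization turning $v_1$ into a strict subsolution with an interior space--time ball and $v_2$ into a supersolution with an exterior one, so that the formal ``velocity at first contact'' comparison becomes a rigorous statement about admissible test functions. This is precisely where the structural hypotheses $0 < m \le g \le M$ and the continuity of $g$ (rather than $g \equiv 1$) are indispensable: one must verify that sup- and inf-convolutions of sub- and supersolutions of the \emph{inhomogeneous} problem remain such when the convolution radius is allowed to vary, the variation rate being tuned against the oscillation of $g$.
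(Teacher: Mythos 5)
The paper offers no proof of this statement: it is imported verbatim (modulo notation) from Kim~2007, where it is Theorem~1.7, and the in-text remark ``The following comparison principle is due to \cite{Kim07}'' together with the bracketed attribution is the entirety of the ``proof'' here. So there is no argument of the paper's own to compare your attempt against; the only meaningful yardstick is Kim's original proof, which is not reproduced in this manuscript.

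That said, the strategy you sketch is recognizably the one Kim uses for comparison principles in the inhomogeneous Hele-Shaw/Stefan setting: perturb the subsolution by a sup-convolution whose radius $r(t)$ shrinks in time at a rate tuned to the modulus of continuity of $g$ (this is exactly what turns the perturbation into a strict subsolution and absorbs the $x$-mismatch in $g$), then run a first-contact argument and compare normal velocities at the touching free boundary point via a barrier and Hopf's lemma. Two points where your sketch is vaguer than a real proof would need to be. First, the interior-contact case: ``propagating $Z\equiv v_2$ backward in time'' is not how this is usually excluded --- rather, one argues that the first touching, if interior, forces equality on a connected component whose boundary includes $\partial K$ or $\Gamma(v_2)$, where strict inequality or the sign condition gives the contradiction directly; no backward-in-time propagation is available since the equation is elliptic at each time slice. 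Second, the simultaneous inf-convolution of $v_2$: in Kim's argument the interior-ball regularity at the contact point is typically extracted from the sup-convolution of $v_1$ alone (the ball tangent to $\Gamma_{t_0}(Z)$ from inside is also tangent to $\Gamma_{t_0}(v_2)$ from inside, because $\Omega_{t_0}(Z)\subset\Omega_{t_0}(v_2)$), so the extra regularization of $v_2$ is not obviously needed and introducing it creates additional bookkeeping (one must check that the inf-convolution remains a supersolution against a spatially varying $g$, which is the same delicate point you already face for $Z$). These are gaps in the sketch rather than fatal errors, but they are precisely the kind of detail that Kim's proof supplies and that a reader of your argument would demand.
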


The second notion of solutions is the one introduced by \cite{EJ}, given by an obstacle problem for each time for a new variable $u(x,t) = \int_0^t v(x,s) \d s$:

\begin{definition}
The function $u(x,t)$ is called the \emph{weak solution} of problem \eqref{HSL} if for every $t\geq 0$, $w = u(\cdot, t)$ solves the obstacle problem
\begin{equation}
\label{OP}
\bcs w \in \mathcal{K}_t,\\ a(w, \vp - w) \geq \ang{ -\ov{g(x)} \chi_{\Rd\setminus\Om_0}, \vp - w } & \text{for all } \vp \in \mathcal{K}_t, \ecs
\end{equation}
where 
$$
\mathcal{K}_t = \set{\vp \in H_0^1(\Rd) ,\ \vp \geq 0 \text{ in } \Om,\ \vp = t \text{ on } K}.
$$
\end{definition}

The theory of the obstacle problem \eqref{OP} is well understood. We refer the reader to \cite{Rodrigues} for instance. We have the following:
\begin{proposition}
Suppose that $K \subset \Omega_0 \subset \Rn$, $\Omega_0$ is bounded open set and $K$ is a compact set with $\partial K \in C^{1,1}$. Then \eqref{OP} has a unique solution $u(\cdot, t) \in H^1_0(\Rd)$ for all $t > 0$. For every $T > 0$ there exists $R(T)$ such that 
\begin{align*}
\Omega_t(u) \subset B_R(0) \qquad \text{for all $t \in [0,T]$}.
\end{align*} 
Moreover, for any $1 < p < \infty$, there is a constant $C = C(m, n, p)$ such that 
\begin{align*}
\no{u(\cdot,t)}_{W^{2,p} (\Rd \setminus K)} \leq Ct \qquad \text{for all $t \geq 0$}.
\end{align*}
Finally, the function $t \mapsto u(x,t)$ is Lipschitz continuous with
\begin{align*}
\abs{u(x, t_1) - u(x, t_2)} \leq \abs{t_1 - t_2} \qquad \text{for all $0 < t_1 \leq t_2 $},
\end{align*}
for all $x \in \Rd$. 
\end{proposition}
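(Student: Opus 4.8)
The plan is to realise \eqref{OP} as a limit of obstacle problems on balls $B_\rho\uparrow\Rd$, to localise the positivity set uniformly in $\rho$ by an explicit radial barrier, and then to read off the remaining properties from standard obstacle-problem regularity together with a lattice-test-function argument for the dependence on $t$. Fix $t>0$ and $\rho$ with $\Omega_0\Subset B_\rho$. On $H^1_0(B_\rho)$ the Dirichlet form is coercive by Poincar\'e, $\mathcal{K}_t\cap H^1_0(B_\rho)$ is nonempty, closed and convex, and $-\ov{g}\chi_{\Rd\setminus\Omega_0}\in L^\infty(B_\rho)$, so \eqref{OP} restricted to $B_\rho$ has a unique solution $u_\rho$ (direct minimisation of the associated convex energy over $\mathcal{K}_t\cap H^1_0(B_\rho)$), and $u_\rho$ is the smallest $w\in\mathcal{K}_t\cap H^1_0(B_\rho)$ with $-\Delta w\ge -\ov{g}\chi_{\Rd\setminus\Omega_0}$ in $B_\rho\setminus K$. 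I would compare $u_\rho$ with a radial supersolution $W$ assembled from three pieces: $W\equiv t$ on a ball containing $K$, $W$ harmonic (hence superharmonic) on an annulus reaching beyond $\Omega_0$, and $W$ a solution of $\Delta W=1/M$ on the outermost annulus, the pieces glued so that $W$ is $C^1$ at the inner radii and both $W$ and its radial derivative vanish at some radius $R(t)$. Since $\ov{g}\ge 1/M$ off $\Omega_0$, this $W$ is an admissible supersolution (in particular $-\Delta W$ carries no negative singular mass), $0\le W\le t$, and $W=t$ on $K$; hence $u_\rho\le W$, which gives $0\le u_\rho\le t$ and $\Omega_t(u_\rho)\subset B_{R(t)}$ for every $\rho\ge R(t)$. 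The gluing equations determine $R(t)$ explicitly as a finite, nondecreasing function of $t$ that does not depend on $\rho$; by uniqueness $u_\rho$ then stabilises for $\rho\ge R(t)$ to a compactly supported $u(\cdot,t)\in H^1_0(\Rd)$, and truncating arbitrary competitors by a cutoff shows that $u(\cdot,t)$ solves \eqref{OP} over the whole of $\mathcal{K}_t$ (the inequality being trivial whenever $\int \ov g\, \vp=\infty$); uniqueness there reduces once more to a single large ball. Monotonicity of $R$ yields $\Omega_t(u)\subset B_{R(T)}$ for all $t\in[0,T]$, and $u(\cdot,0)=0$ because for $t=0$ the right-hand side of \eqref{OP} is nonpositive.

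For the $W^{2,p}$ bound I would note that on $B_\rho\setminus K$ the function $u(\cdot,t)$ solves an obstacle problem with the trivial obstacle, right-hand side $-\ov{g}\chi_{\Rd\setminus\Omega_0}\in L^\infty$ of norm at most $1/m$, constant Dirichlet datum $t$ on $\partial K\in C^{1,1}$ and datum $0$ on $\partial B_\rho$. The classical regularity theory for the obstacle problem (see \cite{Rodrigues}) then gives $u(\cdot,t)\in W^{2,p}(\Rd\setminus K)$ for all $1<p<\infty$, with $\Delta u(\cdot,t)=\ov{g}\chi_{\Rd\setminus\Omega_0}\chi_{\Omega_t(u)}$ a.e.; combining the interior and boundary Calder\'on--Zygmund estimates for $\Delta$ with $\no{\Delta u(\cdot,t)}_\infty\le 1/m$, the sup bound $\no{u(\cdot,t)}_\infty\le t$, and the support inclusion $\Omega_t(u)\subset B_{R(t)}$ gives the stated estimate $\no{u(\cdot,t)}_{W^{2,p}(\Rd\setminus K)}\le Ct$.

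For the Lipschitz dependence on $t$, fix $0<t_1\le t_2$, put $c=t_2-t_1$, $u_i=u(\cdot,t_i)$ and $f=-\ov{g}\chi_{\Rd\setminus\Omega_0}$. The lattice functions $u_1\wedge u_2$ and $u_1\vee(u_2-c)$ belong to $\mathcal{K}_{t_1}$, and $u_1\vee u_2$ and $(u_1+c)\wedge u_2$ belong to $\mathcal{K}_{t_2}$: the constant data on $K$ transform correctly under $\wedge$, $\vee$ and constant shifts, and all four functions are compactly supported, hence in $H^1_0(\Rd)$. Testing \eqref{OP} for $u_1$ with $u_1\wedge u_2$ and \eqref{OP} for $u_2$ with $u_1\vee u_2$ and subtracting yields $\int_{\set{u_1>u_2}}\abs{D(u_1-u_2)}^2\le 0$, so $(u_1-u_2)_+$ is a constant function in $H^1_0(\Rd)$, i.e. $u_1\le u_2$; testing \eqref{OP} for $u_2$ with $(u_1+c)\wedge u_2$ and \eqref{OP} for $u_1$ with $u_1\vee(u_2-c)$ and subtracting yields $\int_{\set{u_2-u_1>c}}\abs{D(u_1-u_2)}^2\le 0$, so $(u_2-u_1-c)_+\equiv 0$, i.e. $u_2-u_1\le c$. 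Thus $0\le u(\cdot,t_2)-u(\cdot,t_1)\le t_2-t_1$ a.e., and since each $u(\cdot,t)$ is continuous (it lies in $W^{2,p}$ for $p$ large) the inequality holds at every point, which is the asserted Lipschitz estimate.

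The only genuinely non-routine point is the passage to the unbounded domain: the Dirichlet form is not coercive on $H^1_0(\Rd)$ and $\ov{g}\chi_{\Rd\setminus\Omega_0}\notin L^2(\Rd)$, so everything rests on the uniform-in-$\rho$ localisation provided by the barrier $W$. Arranging $W$ so that its gluings are $C^1$ — so that $-\Delta W$ acquires no negative singular part, which would break the comparison — and checking that the resulting $R(t)$ is finite is the main computation; once $u(\cdot,t)$ is known to be compactly supported, the $W^{2,p}$ regularity and the comparison-in-time estimates are standard facts applied on a fixed large ball.
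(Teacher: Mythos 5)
The paper gives no proof of this proposition---it simply cites \cite{Rodrigues}---so your argument is a self-contained reconstruction rather than a parallel of a written-out proof. It follows the standard route: exhaust $\Rd$ by balls $B_\rho$, build an explicit radial supersolution $W$ to localise $\Omega_t(u_\rho)$ uniformly in $\rho$, and exploit the lattice structure of $\mathcal{K}_t$ under $\wedge$, $\vee$ and constant shifts. The existence and uniqueness step on $B_\rho$, the barrier construction (you correctly observe that the only kink of $W$, where the plateau meets the harmonic annulus, is concave, so the singular part of $-\Delta W$ is a nonnegative measure and $W$ stays a supersolution), the stabilisation $u_\rho \equiv u$ for $\rho \ge R(t)$, the passage from compactly supported to general competitors, and the four lattice tests yielding $0 \le u(\cdot, t_2) - u(\cdot, t_1) \le t_2 - t_1$ are all correct.

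The $W^{2,p}$ estimate is the weak point, and as written it is a genuine gap. The three ingredients you invoke ($\no{\Delta u(\cdot,t)}_\infty \le 1/m$, $\no{u(\cdot,t)}_\infty \le t$, $\Omega_t(u)\subset B_{R(t)}$) combined with Calder\'on--Zygmund give only $\no{\Delta u(\cdot,t)}_{L^p} \le m^{-1}\abs{B_{R(t)}}^{1/p}$, which does not tend to $0$ as $t\to0^+$ (since $R(0^+)>0$), so no linear-in-$t$ bound follows from them. What one actually has is that $\Delta u(\cdot,t)$ is supported on $\Omega_t(u)\setminus\Omega_0$, and by the divergence theorem $\int_{\Omega_t\setminus\Omega_0}\ov g = \int_{\partial K}\p_\nu u(\cdot,t) \lesssim t$, hence $\no{\Delta u(\cdot,t)}_{L^1}\lesssim t$ and, interpolating with $L^\infty$, $\no{\Delta u(\cdot,t)}_{L^p}\lesssim t^{1/p}$ --- which for $1<p<\infty$ is strictly worse than $Ct$ near $t=0$. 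Since the free boundary moves with strictly positive initial speed, $\abs{\Omega_t(u)\setminus\Omega_0}\gtrsim t$ as well, so in fact $\no{\Delta u(\cdot,t)}_{L^p}\gtrsim t^{1/p}$ and the Proposition's claim $\no{u(\cdot,t)}_{W^{2,p}(\Rd\setminus K)}\le Ct$ cannot hold as $t\to0^+$; a constant depending only on $(m,n,p)$ is also not attainable, since the flux through $\partial K$ and the $C^{1,1}$-boundary constants depend on $K$ and $\Omega_0$. This appears to be an imprecision in the Proposition itself rather than something your argument can repair, but you should not assert that the estimates you list deliver it. For the purposes of the rest of the paper only the local bound $\no{\Delta u^\la(\cdot,t)}_{L^q(\Omega_{\ve/2})}\le\no{\ov{g^\la}}_{L^q(\Omega_{\ve/2})}$ is used, and that your reasoning does establish.
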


One of the main tools used in this paper is the following correspondence of weak and viscosity solutions proved in \cite{KM}:
\begin{theorem}[cf. \cite{KM}*{Theorem 3.1}]
\label{th:equivalency}
Let $u(x,t)$ be the unique solution of \eqref{OP}. For each $t > 0$, let $v(\cdot,t)$ be the solution of
$$
\bcs
\Delta v(\cdot, t) = 0 & \text{in } \Om_t(u) \setminus K,\\
v = 1 & \text{on } K\\
v = 0 & \Gamma_t(u),
\ecs
$$
i.e. for every $t > 0$ define $v(\cdot, t)$ as the supremum of all lower semicontinuous functions $w(x)$ for which there exists some $s$,  $0 < s < t$, such that
$$
\bcs
- \Delta w \leq 0 &\text{ in } \Omega_s(u),\\
 w = 1 &\text{ on } K,\\
\supp w \subset \Omega_s(u).
\ecs
$$
Then $v$ is a viscosity solution of \eqref{HSL}. Moreover,
\begin{align*}
v(x,t) = \partial_t^- u(x,t),
\end{align*}
the left time derivative of $u$.
\end{theorem}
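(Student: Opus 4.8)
My plan is to organize the argument in three blocks: (i) extract from the theory of the obstacle problem \eqref{OP} the monotonicity and convexity of $u$ in $t$ and identify $v(\cdot,t)$ as a capacitary potential; (ii) prove the identification $v=\partial_t^- u$ by a time–difference–quotient argument; (iii) deduce that $v$ is a viscosity solution, which by then follows almost mechanically from (ii) together with \eqref{eq:formalEL}. For (i), the comparison principle for the obstacle problem — the constraint $\vp\ge0$ is fixed while the datum $u=t$ on $K$ increases — gives that $t\mapsto u(x,t)$ is nondecreasing, so the positivity sets $\Omega_t(u)$ are nested; the Lipschitz bound $\abs{u(x,t_1)-u(x,t_2)}\le\abs{t_1-t_2}$ then makes them left-continuous, $\Omega_t(u)=\bigcup_{s<t}\Omega_s(u)$, and forces $u(x,t)=0$ as soon as $u(x,s)=0$ for all $s<t$. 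Applying the comparison principle to a convex combination $\la u(\cdot,t_1)+(1-\la)u(\cdot,t_2)$, which is an admissible supersolution of \eqref{eq:formalEL}, shows $u(x,\cdot)$ is convex, so $\partial_t^- u$ and $\partial_t^+ u$ exist pointwise. Finally, the Perron construction in the statement is the classical balayage of $K$ in the open set $\bigcup_{s<t}\Omega_s(u)=\Omega_t(u)$, so $v(\cdot,t)$ is the bounded harmonic function in $\Omega_t(u)\setminus K$ with boundary values $1$ on $\partial K$ and $0$ on $\Gamma_t(u)$, and $0\le v\le1$.

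For (ii), fix $t_0>0$ and set $w_h:=h^{-1}\bigl(u(\cdot,t_0)-u(\cdot,t_0-h)\bigr)$ for $0<h<t_0$. Since $u(\cdot,t_0)$ and $u(\cdot,t_0-h)$ solve the same Poisson equation $-\Delta\,\cdot=-\ov{g}\,\chi_{\Rd\setminus\Om_0}$ on $\Omega_{t_0-h}(u)\setminus K$, the difference $w_h$ is harmonic there; moreover $w_h\equiv1$ on $K$, $0\le w_h$, and $w_h\le1$ (on $\Gamma_{t_0-h}(u)$ one has $w_h=h^{-1}u(\cdot,t_0)\le1$ by the Lipschitz bound). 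By convexity of $u$ in $t$, $w_h\uparrow\partial_t^- u(\cdot,t_0)$ as $h\downarrow0$, and passing to the limit in the equation shows $\partial_t^- u(\cdot,t_0)$ is harmonic on $\Omega_{t_0}(u)\setminus K$ and $\equiv1$ on $K$. To get $v(\cdot,t_0)\le\partial_t^- u(\cdot,t_0)$: any admissible $w$ in the Perron supremum has $\supp w\subset\Omega_s(u)$ for some $s<t_0$, so for $h<t_0-s$ it is subharmonic on $\Omega_s(u)\setminus K$, equals $1=w_h$ on $\partial K$, and vanishes on $\Gamma_s(u)$ where $w_h\ge0$; the maximum principle gives $w\le w_h\le\partial_t^- u(\cdot,t_0)$, and taking the supremum over $w$ proves it. For the reverse bound, let $\tilde w_h$ be the balayage of $K$ in $\Omega_{t_0-h}(u)$; it is an admissible competitor for $v(\cdot,t_0)$ with parameter $s=t_0-h$, so $\tilde w_h\le v(\cdot,t_0)$, and since $w_h-\tilde w_h$ is harmonic on $\Omega_{t_0-h}(u)\setminus K$, vanishes on $\partial K$, and equals $h^{-1}u(\cdot,t_0)$ on $\Gamma_{t_0-h}(u)$, the optimal $C^{1,1}$ regularity of $u(\cdot,t_0)$ (quadratic vanishing at $\Gamma_{t_0}(u)$) together with a finite-speed bound $\dist\bigl(\Gamma_{t_0-h}(u),\Gamma_{t_0}(u)\bigr)=O(h)$ forces $\max_{\Gamma_{t_0-h}(u)}w_h=O(h)\to0$; hence $\tilde w_h$ and $w_h$ share the limit $\partial_t^- u(\cdot,t_0)$, so $\partial_t^- u(\cdot,t_0)\le v(\cdot,t_0)$. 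Therefore $v(\cdot,t_0)=\partial_t^- u(\cdot,t_0)$.

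For (iii), the identity $v=\partial_t^- u$ already yields that $v$ is nonnegative, that $v(\cdot,t)$ is harmonic on $\Omega_t(v)\setminus K$, that $v=1$ on $K$, that $\int_0^t v=u$, and that $\Omega_t(v)$ grows continuously in $t$; the interior viscosity conditions are then immediate. For the free-boundary conditions, suppose $\phi\in C^{2,1}_{x,t}(Q)$ touches $v^\star$ from above at $(x_0,t_0)\in\Gamma(v)$ with $D\phi(x_0,t_0)\ne0$, $-\Delta\phi(x_0,t_0)>0$, and, for contradiction, $\phi_t(x_0,t_0)>g(x_0)\abs{D\phi(x_0,t_0)}^2$. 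After a small perturbation $\phi$ becomes a strict classical supersolution of \eqref{HSL} in a small space-time cylinder around $(x_0,t_0)$ that lies strictly above $v$ on the parabolic boundary; its Baiocchi transform is then a strict supersolution of the obstacle problem \eqref{eq:formalEL} there, and the minimality of $u$ forces $\Omega_t(v)$ to stay strictly inside the positivity set of the barrier near $t_0$, contradicting the touching. The supersolution condition is proved symmetrically (using that $v^\star=v$ at the relevant free-boundary points, which is part of verifying that $v$ is a \emph{solution}), and the initial and boundary data conditions follow from the fact that the source in \eqref{eq:formalEL} vanishes on $\Om_0$ — recovering the initial datum $v_0$ — together with $v=1$ on $K$. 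What makes this step work is that the source $-\ov{g}\,\chi_{\Rd\setminus\Om_0}$ encodes precisely the normal-velocity law $V=g\abs{Dv}^2$: integrating $-\Delta u=-\ov{g}\,\chi$ over a thin layer swept by the free boundary and differentiating in $t$ recovers $v_t=g\abs{Dv}^2$.

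I expect the main obstacle to be the reverse inequality $\partial_t^- u\le v$ in block (ii): that the time difference quotients reconstruct the full capacitary potential and lose no mass near the free boundary. This relies on the optimal $C^{1,1}$ regularity of the obstacle problem solution and on a finite-speed-of-propagation bound for the free boundary, and it is exactly here that $g\ge m>0$ is essential — if $g$ could vanish the free boundary could stall, the bound $\dist(\Gamma_{t_0-h}(u),\Gamma_{t_0}(u))=O(h)$ would fail, and the identification $v=\partial_t^- u$ (and with it the whole correspondence) would break down, mirroring the loss of uniqueness noted in the introduction.
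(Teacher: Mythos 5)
The paper does not prove this theorem: it is stated with the citation ``cf.\ \cite{KM}*{Theorem 3.1}'' and used as an imported tool, so there is no in-paper proof to compare against. Judged on its own merits, your plan for blocks (i) and the forward inequality $v\leq\partial_t^- u$ in block (ii) is sound — monotonicity and convexity of $t\mapsto u(x,t)$ follow from comparison for the obstacle problem exactly as you say, $w_h$ is harmonic and increases to $\partial_t^- u$, and the maximum-principle comparison with any Perron competitor $w$ is clean.

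The genuine gap is in the reverse inequality $\partial_t^- u\leq v$. You reduce it to $\max_{\Gamma_{t_0-h}(u)}w_h = \max_{\Gamma_{t_0-h}(u)}h^{-1}u(\cdot,t_0)\to 0$, and justify this by a finite-speed bound $\dist(\Gamma_{t_0-h}(u),\Gamma_{t_0}(u))=O(h)$ together with quadratic ($C^{1,1}$) vanishing of $u(\cdot,t_0)$. Neither ingredient is available at this point. The paper only records $W^{2,p}$ regularity of $u(\cdot,t)$, not $C^{1,1}$ up to the free boundary (which would anyway require more than $g$ merely continuous). More importantly, the $O(h)$ displacement bound is a \emph{lower} nondegeneracy of $\abs{Dv}$ along $\Gamma_t(u)$ — it is not supplied by the Lipschitz-in-time estimate, which only gives $u(\cdot,t_0)\leq h$ on $\Gamma_{t_0-h}(u)$, hence $w_h\leq 1$ there, with no decay. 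The standard cheap estimates (Lipschitz-in-$t$ plus the obstacle-problem nondegeneracy $\sup_{B_r}u\gtrsim r^2$) yield only $\dist(\Gamma_{t_0-h}(u),\Gamma_{t_0}(u))=O(\sqrt h)$, and then quadratic vanishing gives $u(\cdot,t_0)=O(h)$ on $\Gamma_{t_0-h}(u)$, so $w_h-\tilde w_h=O(1)$ on that boundary — the argument stalls. A cleaner route around this would be to observe that for $x\in\Gamma_{t_0}(u)$ one has $u(x,s)=0$ for all $s\leq t_0$ by the nestedness of $\Omega_s(u)$, hence $\partial_t^- u(x,t_0)=0$ pointwise on $\Gamma_{t_0}(u)$, and then identify $\partial_t^- u(\cdot,t_0)$ with $v(\cdot,t_0)$ via uniqueness of the Dirichlet problem on $\Omega_{t_0}(u)\setminus K$; but this still requires showing the monotone limit $\partial_t^-u$ attains the boundary value $0$ continuously, which is essentially the same quantitative fact you are missing. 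Block (iii) is also only a sketch — the claim that touching by a strict classical supersolution ``forces $\Omega_t(v)$ to stay strictly inside the positivity set of the barrier'' by minimality of $u$ needs to be carried out at the level of the obstacle problem with a genuine barrier comparison, which is the bulk of the work in \cite{KM}*{Theorem 3.1} and is not immediate from the identity $v=\partial_t^-u$.
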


\section{Rescaling}
\label{rescalling}

We are interested in the behavior of \eqref{HSL} as $t \ri$. The goal is to prove that a rescaled solution converges to the self-similar solution of the homogeneous Hele-Shaw problem with a point delta source. As was observed in \cite{QV}, the behavior depends on the dimension and the necessary rescaling is more involved in the two-dimensional case.

\subsection{Case $n \geq 3$}
The heat interpretation \eqref{heat} of the free boundary velocity from the introduction suggests that the radius of the free boundary should behave as $\sim t^{1/n}$ and thus we use the rescaling
\begin{align}
\label{eq:rescaling}
v^\la(x,t) = \la^{(n-2)/n} v(\la^{1/n} x, \la t).
\end{align}
If we define $K^\la := \frac{K}{\la^{1/n}}$ and $\Om^\la_0 := \frac{\Om_0}{\la^{1/n}}$,  we derive the following problem for $v^\la$ from \eqref{HSL}:
\begin{equation}
\label{THSL}
\bcs
-\Delta v^\la = 0 & \text{in } \Omega(v^\la) \setminus K^\la,\\
v^\la = \la^{(n-2)/n} & \text{on } K^\la,\\
v^\la_t = g^\la(x) \abs{Dv^\la}^2 & \text{on } \Gamma(v^\la),\\
v^\la(\cdot, 0) = v^\la_0 & \text{on } \Omega_0^\la \setminus K^\la, 
\ecs
\end{equation}
where $g^\la(x) = g(\la^{1/n} x)$ as can be easily seen from
\begin{align}
\label{eq:scalingOfg3}
\begin{aligned}
\la^{(n-2)/n + 1} v_t(\la^{1/n} x, \la t) &= g^\la(x) \la^{2 (n-2)/n + 2/n}\abs{(Dv)(\la^{1/n} x, \la t)}\\
v_t(\la^{1/n} x, \la t) &= g^\la(x) \abs{(Dv)(\la^{1/n} x, \la t)}
\end{aligned}
\end{align}
compared to the boundary condition in \eqref{HSL}
\begin{align*}
v_t(\la^{1/n} x, \la t) &= g(\la^{1/n} x) \abs{(Dv)(\la^{1/n} x, \la t)}.
\end{align*}

The rescaled problem \eqref{THSL} implies the weak formulation for the rescaled $u^\la(\cdot, t) = \int_0^t v^\la(\cdot, s) \d s$, the obstacle problem 
\begin{equation}
\label{TOP}
\bcs w \in \mathcal{K}_t^\la\\ 
a\pth{w, \vp - w} \geq \ang{ -\ov{g^\la} \chi_{\Rn \setminus\Om^\la_0}, \vp - w} & \text{for all } \vp \in \mathcal{K}_t^\la, \ecs
\end{equation}
for each $t > 0$, 
where 
$$
\mathcal{K}_t^\la = \set{\vp \in H^1(\Rn) ,\ \vp \geq 0,\ \vp = \la^{(n-2)/n} t \text{ on } K^\la}.
$$
The relation of $u^\la$ to $u$ can be deduced from the definition of $u^\la$ as
\begin{equation}
\label{rescaledU}
\begin{split}
u^\la(x,t) &= \int_0^t v^\la(x,s) \d s \\
&= \la^{(n-2)/n} \int_0^{t} v(\la^{1/n} x, \la s) \d s \\
&= \la^{(n-2)/n} \int_0^{\la t} v(\la^{1/n} x, \sigma) \frac{\d \sigma}{\la} \\
&= \la^{-2/n} u(\la^{1/n} x, \la t).
\end{split}
\end{equation}

The following observation will be useful:
\begin{lemma}
\label{rescaledBound}
The harmonic function $p(x) = C|x|^{2-n}$ is invariant under the rescaling \eqref{eq:rescaling}. In particular,
if $v(x,t) \leq C|x|^{2-n}$ for all $t > 0$ then also
$$
v^\la(x,t) \leq C|x|^{2-n} \quad \text{for all } t > 0.
$$
\end{lemma}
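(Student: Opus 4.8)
The claim has two parts, and the plan is to treat the scaling identity first and then derive the pointwise bound as an immediate corollary.

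\textbf{Step 1: invariance of $p$.} I would simply compute. Apply the rescaling operator \eqref{eq:rescaling} to $p(x) = C\abs{x}^{2-n}$, i.e. form $p^\la(x) = \la^{(n-2)/n} p(\la^{1/n} x)$. Then $p(\la^{1/n} x) = C \abs{\la^{1/n} x}^{2-n} = C \la^{(2-n)/n} \abs{x}^{2-n}$, so $p^\la(x) = \la^{(n-2)/n} \cdot \la^{(2-n)/n} \cdot C\abs{x}^{2-n} = C\abs{x}^{2-n} = p(x)$, since the two exponents of $\la$ cancel. One should also note that $p$ is independent of $t$, so the time-rescaling $t \mapsto \la t$ has no effect, and that $p$ is indeed harmonic away from the origin (the fundamental solution of the Laplacian up to a constant) so the phrase ``harmonic function'' is justified. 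This is the entire content of the first sentence.

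\textbf{Step 2: the comparison consequence.} Suppose $v(x,t) \le C\abs{x}^{2-n}$ for all $t > 0$ (and all $x$ in the domain of $v$). Fix $\la > 0$, $x$, and $t > 0$. Using the definition \eqref{eq:rescaling}, $v^\la(x,t) = \la^{(n-2)/n} v(\la^{1/n} x, \la t)$. Since $\la t > 0$, the hypothesis applies at the point $(\la^{1/n} x, \la t)$ and gives $v(\la^{1/n} x, \la t) \le C \abs{\la^{1/n} x}^{2-n}$. Multiplying by the positive factor $\la^{(n-2)/n}$ and invoking the computation of Step 1 (equivalently, $p^\la = p$) yields $v^\la(x,t) \le \la^{(n-2)/n} C\abs{\la^{1/n} x}^{2-n} = C\abs{x}^{2-n}$, which is the desired inequality for all $t > 0$.

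\textbf{Obstacle.} Frankly there is no real obstacle here; this is a bookkeeping lemma about homogeneities. The only thing to be careful about is getting the exponents right: the spatial dilation contributes $\la^{(2-n)/n}$ through $\abs{\la^{1/n} x}^{2-n}$, and this must exactly cancel the prefactor $\la^{(n-2)/n}$ in \eqref{eq:rescaling} — which it does, precisely because the prefactor $(n-2)/n$ was chosen (in section~\ref{rescalling}) to make the fundamental-solution profile scale-invariant, consistent with the free-boundary radius growing like $t^{1/n}$. One might add a remark that this is exactly why $C\abs{x}^{2-n}$ is a natural barrier for the rescaled family $\{v^\la\}$: a single fixed supersolution of this form controls every $v^\la$ simultaneously, which is what makes it useful later when extracting limits as $\la \to \infty$.
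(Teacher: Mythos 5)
Your proof is correct and follows essentially the same approach as the paper: the paper's proof is precisely your Step 1 computation $p^\la(x) = C\la^{(n-2)/n}\abs{\la^{1/n}x}^{2-n} = C\abs{x}^{2-n}$, with the comparison consequence (your Step 2) left implicit as an immediate corollary.
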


\begin{proof}
Observe that
$$
p^\la(x) = C \la^{(n-2)/n} |\la^{1/n} x|^{2-n} = C|x|^{2-n}.
$$
\end{proof}

\subsection{Case $n = 2$}

Since the rescaling \eqref{eq:rescaling} does not recover the logarithmic singularity at the origin in dimension $n = 2$, we use a different rescaling that preserves the system:
\begin{align*}
v^\la(x,t) = \log \mathcal{R}(\la) v ( \mathcal{R}(\la) x, \la t),
\end{align*}
where $\mathcal{R}(\la)$ is the unique solution of 
\begin{align}
\label{eq:r2}
\mathcal{R}^2 \log \mathcal{R} = \la
\end{align}
for any $\la > 0$. In fact, the solution can be found explicitly in terms of the Lambert W function, $\mathcal{R}(\la) = \exp\pth{\half W(2 \la)}$. It is easy to see that $\lim_{\la\to \infty} \mathcal{R}(\la) = \infty$. Taking a logarithm of \eqref{eq:r2} gives
\begin{align*}
2 \log \mathcal{R}(\la) \pth{1 + \frac{\log \log \mathcal{R}(\la)}{2 \log \mathcal{R}(\la)} - \frac{\log \la}{2 \log \mathcal{R}(\la)}} = 0.
\end{align*}
We deduce 
\begin{align}
\label{eq:rescaling2LogAsymptotics}
\lim_{\la\to \infty} \frac{\log \la}{2 \log \mathcal{R}(\la)} = 1,
\end{align} 
which together with \eqref{eq:r2} yields
\begin{align}
\label{eq:rescaling2Asymptotics}
\lim_{\la \to \infty} \frac{\mathcal{R}(\la)}{\mathcal{R}_\infty(\la)} = 1, \qquad \mathcal{R}_\infty(\la) = \pth{\frac{2\la}{\log \la}}^{1/2}.
\end{align}

In this case, we define $K^\la := \frac{K}{\mathcal{R}(\la)}$ and $\Om^\la_0 := \frac{\Om_0}{\mathcal{R}(\la)}$. Since $v$ satisfied \eqref{HSL},  $v^\la$ satisfies
\begin{equation}
\label{THSL2}
\bcs
-\Delta v^\la = 0 & \text{in } \Omega(v^\la) \setminus K^\la,\\
v^\la = \log \mathcal{R}(\la) & \text{on } K^\la,\\
v^\la_t = g^\la(x) \abs{Dv^\la}^2 & \text{on } \Gamma(v^\la),\\
v^\la(\cdot, 0) = v^\la_0 & \text{on } \Omega_0^\la \setminus K^\la. 
\ecs
\end{equation}
Here $g^\la(x) = g(\mathcal{R}(\la) x)$, which can be derived as in \eqref{eq:scalingOfg3} together with \eqref{eq:r2}.

The rescaled problem \eqref{THSL2} implies a weak formulation for the rescaled $u^\la(x,t) = \int_0^t v^\la(x, s) \d s$ of the form \eqref{TOP}
where now
$$
\mathcal{K}_t^\la = \set{\vp \in H^1(\Rn) ,\ \vp \geq 0,\ \vp = \log \mathcal{R}(\la) t \text{ on } K^\la}.
$$
This induces scaling for weak solution $u$ analogous to \eqref{rescaledU},
\begin{align*}
u^\la(x,t) &= \frac{\log \mathcal{R}(\la)}{\la} u(\mathcal{R}(\la) x, \la t).
\end{align*}

\begin{remark}
Note that the scaling in dimensions $n \geq 3$ and the scaling in dimension $n = 2$ are qualitatively different. Indeed, in dimension $n \geq 3$ a solution with a point source and a solution with a finite source have the same asymptotic speed of the free boundary $\approx t^{1/n}$.
That is not true in dimension $n = 2$ where a point source solution has asymptotic speed $\approx t^{1/2}$, but a solution with a finite source has a slower asymptotic speed $\approx \pth{\frac{t}{\log t}}^{1/2}$.
\end{remark}

\section{Comparison with radially symmetric solutions}

Radially symmetric solutions for the Hele-Shaw problem, derived in \cite{QV}, will serve as test functions in our arguments. A radially symmetric solution in the domain $|x| \geq  a$, $t \geq 0$ is a pair of functions $p(x,t)$ and $R(t)$, where $p$ is of the form
\begin{subequations}
\label{eq:radialSolution}
\begin{equation}
\label{radialSolution}
p(x,t) = \frac{A a^{2-n} \pth{|x|^{2-n} - R^{2-n}(t)}_+}{a^{2-n} - R^{2-n}(t)} \qquad \text{if $n \geq 3$},
\end{equation}
or
\begin{equation}
\label{radialSolution2}
p(x,t) = \frac{A \pth{\log \frac{R(t)}{\abs{x}}}_+}{\log \frac{R(t)}{a}} \qquad \text{if $n = 2$},
\end{equation}
\end{subequations}
and $R(t)$ satisfies a certain algebraic equation (see \cite{QV} for details, we will be interested only in the behavior as $t \to \infty$).
This solution satisfies the boundary conditions
\begin{align}
\label{eq:radialBoundaryCondition}
\begin{aligned}
p(x, t) &= A a^{2-n} & \text{for } |x| &= a > 0,\\
p(x,t) &= 0 &\quad \text{for } |x| &= R(t),\\
R'(t) &= \ov L |D p|, &\quad \text{for } |x| &= R(t),\\
R(0) &= b > a.
\end{aligned}
\end{align}
Also
\begin{subequations}
\label{eq:radiusBehavior}
\begin{align}
\label{eq:radiusBehavior3}
\limto{t} \frac{R(t) }{c_\infty t^{1/n}} = 1, \qquad c_\infty = \pth{\frac{A n (n-2)}{L}}^{1/n} \qquad \text{if } n\geq 3, 
\end{align}
or
\begin{align}
\label{eq:radiusBehavior2}
\limto{t} \frac{R(t) }{c_\infty \pth{\frac{t}{\log t}}^{1/2}} = 1, \qquad c_\infty = 2 \sqrt{A/L} \qquad \text{if } n = 2. 
\end{align}
\end{subequations}
In dimension $n = 2$, we will also make use of the limit behavior
\begin{align}
\label{eq:radiusLogBehavior2}
\lim_{t \to \infty}  \frac{\log R(t)}{\log t} = \half.
\end{align}
\begin{lemma}
\label{subSuperSolutions}
For $L = \ov m$ (resp. $L = \ov M$), with $m$, $M$ defined in \eqref{boundong}, $p(x,t)$ is a viscosity subsolution (resp. supersolution) of \eqref{HSL} in $Q$. 
\end{lemma}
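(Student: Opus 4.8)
The plan is to verify directly that the explicit radial pair $(p(x,t), R(t))$ from \eqref{eq:radialSolution}--\eqref{eq:radialBoundaryCondition} satisfies the two conditions in Definition~\ref{def:viscositySubsolution} (resp. the supersolution definition), exploiting the fact that $p$ is harmonic away from the fixed inner ball and from the free boundary $\{|x| = R(t)\}$. First I would record that, for fixed $t$, $p(\cdot,t)$ is harmonic in $\{a < |x| < R(t)\}$ (both $|x|^{2-n}$ and $\log(R(t)/|x|)$ being harmonic on $\Rn \setminus \{0\}$), so condition (a) of the subsolution definition — namely $-\Delta \phi \le 0$ wherever $v - \phi$ has an interior max in the positivity set — is automatic at any point with $p > 0$: there $p$ itself is smooth and harmonic, so if $p - \phi$ has a local max at $(x_0,t_0)$ with $p(x_0,t_0)>0$, then $\Delta \phi(x_0,t_0) \ge \Delta p(x_0,t_0) = 0$, giving $-\Delta\phi(x_0,t_0) \le 0$. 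The boundary condition $p = Aa^{2-n} $ on $|x| = a$ is exactly the required boundary data (and $p \le 1$ there can be arranged by the normalization, or is not needed since the lemma only claims a subsolution of \eqref{HSL} in $Q$, not with boundary data), and the initial/containment conditions in item (1) hold since $\cl{\Omega(p)} \cap \{t \le T\}$ is the compact annular region $\{a \le |x| \le R(T)\}$.

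The core of the argument is the free boundary condition (b). Suppose $(x_0,t_0) \in \Gamma(p)$, so $|x_0| = R(t_0)$, and let $\phi \in C^{2,1}_{x,t}$ touch $p$ from above at $(x_0,t_0)$ in $\cl{\Omega(p)} \cap \{t \le t_0\}$, with $|D\phi|(x_0,t_0) \ne 0$ and $-\Delta\phi(x_0,t_0) > 0$. By radial symmetry and the explicit formula, near the free boundary $p$ behaves like $|Dp(x_0,t_0)| \cdot (R(t) - |x|)_+$ to leading order, and $R$ is $C^1$ with $R'(t) = L^{-1}|Dp|$ on the free boundary by \eqref{eq:radialBoundaryCondition}. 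The standard touching argument then forces the inequality $\phi_t(x_0,t_0) \le |D\phi|(x_0,t_0) \, R'(t_0) = L^{-1} |D\phi|(x_0,t_0)|Dp(x_0,t_0)|$, and since $\phi$ touches $p$ from above at a free boundary point one also gets $|D\phi|(x_0,t_0) \le |Dp(x_0,t_0)|$; combining, $\phi_t(x_0,t_0) \le L^{-1}|D\phi|^2(x_0,t_0)$. Now with $L = 1/m$ this reads $\phi_t(x_0,t_0) \le m |D\phi|^2(x_0,t_0) \le g(x_0)|D\phi|^2(x_0,t_0)$ by the lower bound in \eqref{boundong}, which is precisely $(\phi_t - g(x_0)|D\phi|^2)(x_0,t_0) \le 0$. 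The supersolution case is symmetric: taking $L = 1/M$ and touching $p$ from below gives $\phi_t \ge M|D\phi|^2 \ge g(x_0)|D\phi|^2$ using the upper bound $g \le M$; one should note that for the supersolution one may also want $p \ge 1$ on $\Gamma$ (achievable by choosing $A a^{2-n}$ large), though as stated the lemma only asserts sub/supersolution status in $Q$.

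The main obstacle is the careful handling of the touching argument at the free boundary: one must make rigorous the two one-sided gradient/velocity inequalities $|D\phi| \le |Dp|$ and $\phi_t \le |D\phi| R'$ (with equalities replaced by the right inequalities on the supersolution side). This is a known computation — it is exactly the verification that a classical subsolution in the sense of moving the free boundary no faster than the velocity law is a viscosity subsolution — and it proceeds by comparing $\phi$ with the barrier-like expansion of $p$ in a small parabolic neighborhood of $(x_0,t_0)$, using that $-\Delta\phi(x_0,t_0)>0$ to rule out the degenerate tangential case and to ensure $\phi$ genuinely detaches from $p$ into the zero set in the appropriate direction. I would also need to double-check that there is no touching from above at the inner sphere $|x| = a$ that could violate condition (a): there $p$ has a (one-sided) gradient jump but is still superharmonic-free in the annulus and constant on $\{|x|=a\}$, and since the free boundary conditions in Definition~\ref{def:viscositySubsolution} are only tested on $\Gamma(v)$ while (a) only concerns points with $v>0$ where $p$ is smooth, this causes no issue; the fixed boundary $\partial K$ is excluded from $Q = (\Rn\setminus K)\times[0,\infty)$ anyway, and $\{|x|=a\}$ plays the role of $\partial K$ with $K = \cl{B_a(0)}$. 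The asymptotics \eqref{eq:radiusBehavior} and \eqref{eq:radiusLogBehavior2} are not needed for this lemma itself but justify that these barriers have the correct long-time growth to be useful as comparison functions later.
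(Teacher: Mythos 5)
Your overall plan is the same as the paper's — verify Definition~\ref{def:viscositySubsolution} directly for the explicit radial pair, with harmonicity handling part~(b)(i) and a touching argument at the free boundary handling part~(b)(ii) — but there is a genuine error in the key gradient comparison, and it is not a harmless typo: the rest of the step does not go through as written.

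You assert that because $\phi$ touches $p$ from above at the free boundary point, one gets $|D\phi|(x_0,t_0) \le |Dp(x_0,t_0)|$. The inequality goes the other way. Since $\phi \ge p \ge 0$ with equality at $(x_0,t_0)$, the difference $\phi - p$ is nonnegative in the closed positivity set, superharmonic there at time $t_0$ (here one uses the test assumption $-\Delta\phi(x_0,t_0) > 0$ together with $\Delta p = 0$), and vanishes at the boundary point $x_0$; Hopf's lemma therefore gives $\partial(\phi-p)/\partial\nu > 0$ along the inner normal, i.e.\ $|D\phi| > |Dp|$ at $(x_0,t_0)$. This is exactly the opposite of what you wrote, and the sign matters: your combination step reads from $\phi_t \le L^{-1}|D\phi|\,|Dp|$ and (your) $|D\phi| \le |Dp|$ to $\phi_t \le L^{-1}|D\phi|^2$, but multiplying $L^{-1}|D\phi|\,|Dp|$ by the bound $|Dp| \ge |D\phi|$ only yields $\phi_t \le L^{-1}|Dp|^2$, which is not the quantity needed. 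With the correct direction $|Dp| < |D\phi|$ one gets $\phi_t \le L^{-1}|D\phi|\,|Dp| < L^{-1}|D\phi|^2 = m|D\phi|^2 \le g(x_0)|D\phi|^2$, which is the paper's chain. So the Hopf's lemma step — which you acknowledge as "the main obstacle" but do not carry out — is not just a technicality to be double-checked; it produces the inequality in the opposite direction to the one you assumed, and without it the argument collapses. (For the supersolution case with touching from below, Hopf gives $|D\phi| < |Dp|$, so the direction you want does hold there; but you cannot get the subsolution case by "symmetry" if you started with the wrong sign.) Your velocity comparison $\phi_t \le |D\phi|\,R'(t_0)$ is fine, and the rest of the setup (part (a), part (b)(i), and the remark that the inner sphere plays the role of $\partial K$) is correct and matches the paper.
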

\begin{proof}
Assume that $\ov L = m$. The case $\ov L = M$ is similar. We will show that $p(x,t)$ satisfied the conditions in definition \ref{def:viscositySubsolution}. 

$p(x,t)$ is clearly continuous and (a) immediately follows from \eqref{radialSolution} and \eqref{eq:radiusBehavior}.

As for (b), (i) follows from the fact that $p(\cdot, t)$ is harmonic in $\Omega_t(p)$ for every $t \geq 0$.

To show (ii), let $\phi$ be a smooth function such that $p - \phi$ has a local maximum $0$ in $\cl{\Om(p)} \cap \set{t \leq t_0}$ at $(x_0,t_0) \in \Gamma(p)$ such that $|D\phi|(x_0,t_0) \neq 0$ and $- \Delta \phi (x_0,t_0) > 0$. Let $B_r \subset \Om_{t_0}(p)$ be a ball touching $\Gamma_{t_0}(p)$ from inside at $(x_0,t_0)$ in which $-\Delta \phi > 0$ (by smoothness of boudaries and continuity of $\Delta \phi$ there is such a ball). Then by the Hopf's Lemma (see \cite{Evans}, ch. 6.4.2), since $-\Delta (\phi - p)  = - \Delta \phi < 0$, 
$$
\pd{( \phi - p)}{\nu} > 0 \qquad \text{at } (x_0,t_0),
$$
where $\nu$ is the inner normal. Thus together with the fact that the boundaries are smooth level sets, we conclude that
$$
|Dp|(x_0,t_0) = \pd{p}{\nu} < \pd{\phi}{\nu} = |D\phi|(x_0,t_0).
$$
 The normal velocity of the free boundary is given as $V_n = p_t / |Dp|$ (level sets), we have
$$
\frac{p_t}{|Dp|} (x_0,t_0) \geq \frac{\phi_t}{|D\phi|}(x_0,t_0).
$$
Hence finally
\begin{align*}
(\phi_t - g (x_0) |D\phi|^2) (x_0,t_0) &< |D\phi| \pth{\frac{p_t}{|Dp|} - g(x_0) |Dp| } (x_0,t_0)\\
&\expl{\eqref{boundong}}{\leq}|D\phi| \pth{\frac{p_t}{|Dp|} - m |Dp| }(x_0,t_0) \expl{\eqref{eq:radialBoundaryCondition}}{=} 0.
\end{align*}
\end{proof}

Comparison with the radially symmetric sub- and supersolution provides us with the following straightforward result:
\begin{lemma}
\label{th:boundaryBound}
Let $v$ be a viscosity solution of \eqref{HSL}. There exists $t_0 > 0$ and constants $\rho_1$ and $\rho_2$, $0 < \rho_1 < \rho_2$, such that 
$$
\rho_1 t^{1/n} < \min_{\Gamma_t(v)} |x| \leq \max_{\Gamma_t(v)} |x| < \rho_2 t^{1/n} \qquad \text{if } n \geq 3,
$$
or 
$$
\rho_1 \mathcal{R}(t) < \min_{\Gamma_t(v)} |x| \leq \max_{\Gamma_t(v)} |x| < \rho_2 \mathcal{R}(t) \qquad \text{if } n = 2,
$$

for $t \geq t_0$ and
\begin{align*}
\max_{\Gamma_t(v)} \abs{x} < \rho_2
\end{align*}
for $0 \leq t \leq t_0$.

Moreover there is a constant $C > 0$ such that
\begin{align*}
0 \leq v(x,t) \leq C |x|^{2-n}.
\end{align*}
\end{lemma}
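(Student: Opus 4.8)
The plan is to trap $v$ between two explicit radially symmetric solutions of \eqref{HSL} via the comparison principle and Lemma~\ref{subSuperSolutions}, and then read off the free-boundary bounds from their large-time behavior \eqref{eq:radiusBehavior} (together, when $n=2$, with the asymptotics \eqref{eq:rescaling2LogAsymptotics}--\eqref{eq:rescaling2Asymptotics} of $\mathcal R$). Recall that $0 \le v \le 1$ by the maximum principle.

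Choose $0 < a_- < b_-$ with $\cl{B_{a_-}} \subset \interior{K}$ and $\cl{B_{b_-}} \subset \Om_0$, and $0 < a_+ < b_+$ with $\cl{B_{a_+}} \subset \interior{K}$ and $\cl{\Om_0} \subset B_{b_+}$. Let $p$ be the radial solution \eqref{eq:radialSolution} with hole radius $a_-$, initial free-boundary radius $b_-$, amplitude $A_-$, and $L = \ov m$; let $q$ be the one with parameters $a_+$, $b_+$, $A_+$, and $L = \ov M$. Since $B_{a_\pm} \subset K$, their restrictions to $Q$ are, by Lemma~\ref{subSuperSolutions}, a viscosity sub- and supersolution of \eqref{HSL} in $Q$, respectively. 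Inspection of \eqref{eq:radialSolution} shows that $t \mapsto p(x,t)$ and $t \mapsto q(x,t)$ are nondecreasing for each fixed $x$ (the free-boundary radius $R(t)$ of \eqref{eq:radialBoundaryCondition} increases) and that their suprema in $t$ are explicit; hence for $A_-$ small enough $p(\cdot,t) < 1$ on $\partial K$ for all $t$ and $p(\cdot,0) < v_0$ on $\cl{B_{b_-}} \cap K^c$ (where $v_0$ is bounded below by a positive constant), and for $A_+$ large enough $q(\cdot,t) > 1$ on $\cl{\Om_0} \setminus \interior{K} \supset K^c \cap \cl{\Om_0}$ for all $t$. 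Together with $v \le 1$ and boundedness of the relevant initial positivity sets, these facts let the comparison principle \cite{Kim07}*{Theorem 1.7} apply to $(p, v)$ and to $(v^\star, q)$, yielding
\[
p \le v \le v^\star \le q \qquad \text{on } K^c \times [0,\infty).
\]

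Now $v = 1 > 0$ on $K$, $K \subset B_{b_+} \subseteq B_{R_+(t)}$ since $R_+(t) \ge b_+$, and the positivity set of $p(\cdot,t)$ (resp. $q(\cdot,t)$) inside $K^c$ is $\set{x \in K^c : \abs{x} < R_-(t)}$ (resp. $\set{x \in K^c : \abs{x} < R_+(t)}$), where $R_\mp$ denote the free-boundary radii of $p$ and $q$; hence the ordering forces $B_{R_-(t)} \subset \Om_t(v) \subset B_{R_+(t)}$, i.e.
\[
R_-(t) \le \min_{\Gamma_t(v)} \abs{x} \le \max_{\Gamma_t(v)} \abs{x} \le R_+(t) \qquad \text{for all } t \ge 0.
\]
For $n \ge 3$, \eqref{eq:radiusBehavior3} gives $R_-(t)/t^{1/n} \to (A_- n(n-2) m)^{1/n}$ and $R_+(t)/t^{1/n} \to (A_+ n(n-2) M)^{1/n}$, so for $t$ past some $t_0$ we get $R_-(t) > \rho_1 t^{1/n}$ and $R_+(t) < \rho_2 t^{1/n}$ with $\rho_1 = \ha{(A_- n(n-2)m)^{1/n}}$ and $\rho_2$ any number larger than both $2(A_+ n(n-2)M)^{1/n}$ and $R_+(t_0)$; the latter, with $R_+$ nondecreasing, handles $0 \le t \le t_0$. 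For $n = 2$ one argues identically, using \eqref{eq:radiusBehavior2} together with \eqref{eq:rescaling2Asymptotics} to get $R_-(t)/\mathcal R(t) \to (2A_- m)^{1/2}$ and $R_+(t)/\mathcal R(t) \to (2A_+ M)^{1/2}$. Finally, when $n \ge 3$ the bound $R_+(t) \ge b_+$ gives $a_+^{2-n} - R_+(t)^{2-n} \ge a_+^{2-n} - b_+^{2-n} > 0$, so \eqref{radialSolution} yields $v(x,t) \le q(x,t) \le \frac{A_+ a_+^{2-n}}{a_+^{2-n} - b_+^{2-n}} \abs{x}^{2-n}$ on $K^c$; since $v \le 1 \le (\max_K \abs{x})^{n-2} \abs{x}^{2-n}$ on $K$, we obtain $0 \le v \le C \abs{x}^{2-n}$ with $C$ the larger of these two constants. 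For $n = 2$ the estimate reduces to $v \le C$, again by the maximum principle.

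The parameter bookkeeping ($a_\pm, b_\pm, A_\pm$) is routine. The one delicate point is that the lateral ordering $v < q$ must hold on $\partial K \times [0,\infty)$ at \emph{every} time, not merely at $t=0$ — this is exactly what the monotonicity of $t \mapsto q(x,t)$ provides — and, only in the case $n=2$, the minor task of converting the $(t/\log t)^{1/2}$ growth of the radial radii to the $\mathcal R(t)$ scale via \eqref{eq:rescaling2Asymptotics}--\eqref{eq:rescaling2LogAsymptotics}.
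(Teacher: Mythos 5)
Your proposal is correct and follows essentially the same route as the paper: trap $v$ between radially symmetric viscosity sub- and supersolutions $p$ and $q$ supplied by Lemma~\ref{subSuperSolutions}, invoke the comparison principle, and read the free-boundary bounds off the asymptotics \eqref{eq:radiusBehavior} (converted to the $\mathcal R(t)$ scale when $n=2$) and the singularity bound from the explicit form of $q$. The only difference is cosmetic — you spell out the monotonicity-in-$t$ of the radial barriers and the choice of $A_\pm$ needed to verify the hypotheses of the comparison theorem, which the paper compresses into a one-line appeal to the maximum principle, and you cite \cite{Kim07}*{Theorem 1.7} where the paper cites the equivalent \cite{KM}*{Theorem 2.6}.
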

\begin{proof}
\begin{figure}[t]
\centering
\includegraphics{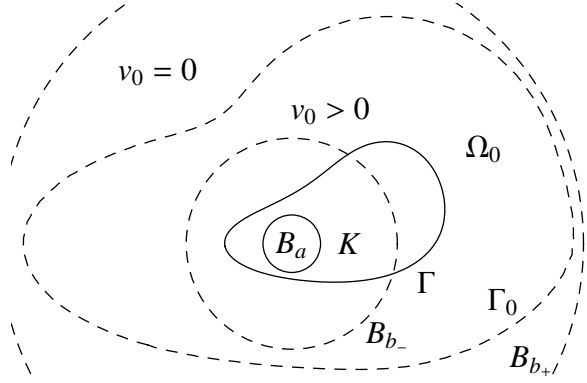}
\caption{Arrangement of domains for comparison}
\label{fig:domainBalls}
\end{figure}Given problem \eqref{HSL}, we can find constants $a$, $b_\pm$ such that $B_a(0) \subset K$ and $\cl{B_{b_-}(0)} \subset \Om_0 \subset \cl{\Om_0} \subset B_{b_+}(0)$ (see Fig.~\ref{fig:domainBalls}). Set also $L_- = \ov m$, $L_+ = \ov M$.  Using the maximum principle for harmonic functions, we can also find constants $A_-$ and $A_+$ so that $p_-$ and $p_+$, the radially symmetric subsolution and supersolution, respectively, given in \eqref{eq:radialSolution} that satisfy the boundary conditions \eqref{eq:radialBoundaryCondition} with the respective constants, also satisfy
$$
p_- \prec v \prec p_+ \qquad \text{on } \pth{K^c \times \set{0}} \cup \pth{\Gamma \times (0,\infty)}.
$$
Then by the comparison principle for viscosity solutions, Theorem 2.6 in \cite{KM}, we have 
$$
p_- \prec v \prec p_+ \qquad \text{on } K^c \times (0,\infty).
$$
and also
\begin{align*}
v(x,t) \leq p_+(x,t) < C |x|^{2 - n},
\end{align*}
for $C > 0$ large enough and
$$
R_-(t)  < \min_{\Gamma_t(v)} |x| \leq \max_{\Gamma_t(v)} |x| < R_+(t),
$$
where $R_\pm$ are the radii of free boundaries of functions $p_\pm$, respectively, and their limit behavior is given by \eqref{eq:radiusBehavior}.
\end{proof}

Lemma~\ref{th:boundaryBound} is all we need to prove the crucial theorem from \cite{QV}:
\begin{theorem}[Near-field limit]
\label{nearFieldLimit}
The viscosity solution $v(x,t)$ of the Hele-Shaw-like problem \eqref{HSL} converges to the unique solution $P(x)$ of the exterior Dirichlet problem
\begin{equation}
\label{exteriorDirichletProb}
\bcs
\Delta P = 0, & x \in \R^n \setminus K,\\
P = 1, & x \in \Gamma,\\
\limto{|x|} P(x) = 0, & \text{if } n \geq 3, \text{ or}\\
P \text{ is bounded} & \text{if } n = 2.
\ecs
\end{equation} 
as $t \to \infty$ uniformly on compact subsets of $\cl{K^c}$.
\end{theorem}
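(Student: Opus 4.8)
The plan is to deduce the statement from the two facts supplied by Lemma~\ref{th:boundaryBound}: the free boundary $\Gamma_t(v)$ escapes to infinity, so that any fixed compact set eventually lies in the positivity set, and the uniform bound $0\le v(x,t)\le C\abs{x}^{2-n}$. The strategy is to show that every subsequential limit of $v(\cdot,t)$ as $t\to\infty$ solves the exterior Dirichlet problem \eqref{exteriorDirichletProb} and then to invoke its uniqueness.

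First I would fix a compact set $E\subset\cl{K^c}$, choose $\rho$ with $E\subset\cl{B_\rho(0)}$, and use Lemma~\ref{th:boundaryBound} to find $t_E$ with $\min_{\Gamma_t(v)}\abs{x}>\rho$ for all $t\ge t_E$. Since $K\subset\Omega_t(v)$ for every $t$, this forces $\cl{B_\rho(0)}\setminus\interior{K}\subset\cl{\Omega_t(v)}\setminus\interior{K}$ for $t\ge t_E$; on the interior of this set $v(\cdot,t)$ is harmonic (by the construction of $v$ in Theorem~\ref{th:equivalency}), it equals $1$ on $\Gamma$, and it obeys $0\le v(x,t)\le C\abs{x}^{2-n}$. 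In particular $\set{v(\cdot,t):t\ge t_E}$ is uniformly bounded on $\cl{B_\rho(0)}\setminus\interior{K}$.

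Next I would upgrade this to precompactness in $C(E\setminus\interior{K})$. Interior gradient estimates for harmonic functions bound $\abs{Dv(\cdot,t)}$ uniformly on compact subsets of $\interior{B_\rho(0)}\setminus K$, while near $\Gamma$ --- which is smooth, with the constant boundary value $1$ and the $t$-independent two-sided bound above --- boundary regularity estimates for the Dirichlet problem give uniform $C^{1,\al}$ control up to $\Gamma$. Hence $\set{v(\cdot,t):t\ge t_E}$ is equicontinuous on $E\setminus\interior{K}$, so by the Arzel\`a--Ascoli theorem it is precompact in $C(E\setminus\interior{K})$; on $E\cap\interior{K}$ one has $v\equiv1$. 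Applying this along an exhaustion of $\cl{K^c}$ by compact sets and diagonalizing, any sequence $t_k\to\infty$ has a subsequence along which $v(\cdot,t_k)$ converges locally uniformly on $\cl{K^c}$ to a function $P_\infty$.

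Such a $P_\infty$ is harmonic in $\Rn\setminus K$, equals $1$ on $\Gamma$, and inherits the bound $0\le P_\infty(x)\le C\abs{x}^{2-n}$, so $P_\infty(x)\to0$ as $\abs{x}\to\infty$ when $n\ge3$ and $P_\infty$ is bounded when $n=2$; thus $P_\infty$ solves \eqref{exteriorDirichletProb}. Since that problem has a unique solution $P$ --- by the maximum principle on the exterior domain $\Rn\setminus K$, using the decay at infinity for $n\ge3$ and the classical exterior-domain maximum principle for bounded harmonic functions for $n=2$ --- we conclude $P_\infty=P$ for every such subsequence, whence $v(\cdot,t)\to P$ locally uniformly on $\cl{K^c}$ as $t\to\infty$. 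The step I expect to be the main obstacle is securing the uniform-up-to-$\Gamma$ regularity of the family $\set{v(\cdot,t)}$: one must know that $v(\cdot,t)$ is genuinely harmonic in $\Omega_t(v)\setminus K$ and attains $1$ continuously on $\Gamma$, with estimates that do not degenerate as $t\to\infty$ --- which is precisely where the smoothness of $\Gamma$ and the $t$-independent bound of Lemma~\ref{th:boundaryBound} enter. (Alternatively: $t\mapsto v(x,t)$ is nondecreasing, a consequence of the comparison principle together with the monotonicity of $\Omega_t(v)$ in $t$, so $P(x):=\limto{t}v(x,t)$ exists pointwise; Harnack's theorem then identifies $P$ as the harmonic solution of \eqref{exteriorDirichletProb}, and Dini's theorem promotes the monotone pointwise convergence to locally uniform convergence.)
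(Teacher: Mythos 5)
The paper's own ``proof'' of Theorem~\ref{nearFieldLimit} is a one-line citation to \cite{QV}*{Theorem 4.1}, so there is no internal argument here to compare against line by line; your proposal supplies one, and it is correct. Both of the routes you sketch go through. The parenthetical monotonicity route is the cleaner one and is presumably closest to the \cite{QV} argument --- though the monotonicity $v(x,t_1)\le v(x,t_2)$ for $t_1\le t_2$ is most directly obtained not from the viscosity comparison theorem (Theorem 2.1, which requires strictly ordered data), but from Theorem~\ref{th:equivalency} plus the maximum principle: $u(\cdot,t)=\int_0^t v(\cdot,s)\,ds$ is nondecreasing in $t$, so $\Omega_t(u)$ expands, and $v(\cdot,t)$ is the harmonic function in $\Omega_t(u)\setminus K$ with $v=1$ on $\Gamma$ and $v=0$ on $\Gamma_t(u)$, whence ordering on the smaller annulus $\Omega_{t_1}(u)\setminus K$ follows by the maximum principle. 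Harnack's theorem then identifies $P(x)=\lim_{t\to\infty}v(x,t)$ as harmonic; the bound $0\le v\le C\abs{x}^{2-n}$ from Lemma~\ref{th:boundaryBound} yields the correct condition at infinity ($P\to0$ for $n\ge3$, $P$ bounded for $n=2$); uniqueness for the exterior Dirichlet problem pins down $P$; and Dini upgrades to locally uniform convergence, exactly as you say. The Arzel\`a--Ascoli route is a valid alternative that avoids monotonicity entirely at the price of regularity estimates; the step you flag as the possible obstacle --- $t$-independent regularity up to $\Gamma$ --- is indeed the crux there, and works because $\Gamma$ and the boundary value are fixed in $t$, so on the fixed domain $B_\rho\setminus K$ the uniform $L^\infty$ bound feeds into boundary Schauder estimates that do not degenerate in $t$. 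One harmless slip: for $E\subset\cl{K^c}$ one has $E\cap\interior{K}=\emptyset$, so the remark that ``on $E\cap\interior{K}$ one has $v\equiv1$'' is vacuous.
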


\begin{proof}
See proof of \cite{QV}*{Theorem 4.1}.
\end{proof}

The following constant $C_*$ characterizes the singularity of the limit solution:

\begin{lemma}
\label{constantSingularity}
There exists a constant $C_* = C_*(K)$ such that the solution $P$ of problem \eqref{exteriorDirichletProb} satisfies
$$
\limto{|x|} |x|^{n-2} P(x) = C_*.
$$
\end{lemma}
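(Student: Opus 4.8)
The plan is to reduce the statement to a standard fact about the asymptotic expansion of bounded harmonic functions in exterior domains. Recall that $P$ solves $\Delta P = 0$ in $\R^n \setminus K$ with $P = 1$ on $\Gamma = \partial K$ and, for $n \geq 3$, $P(x) \to 0$ as $\abs{x} \to \infty$; for $n = 2$ we only know $P$ is bounded. First I would treat the case $n \geq 3$. Fix $R_0$ large enough that $K \subset B_{R_0}(0)$. Then $P$ is a bounded harmonic function on the exterior domain $\R^n \setminus B_{R_0}(0)$ that decays at infinity, so by the removable-singularity / Kelvin-transform analysis of harmonic functions near infinity it admits an expansion
\begin{align*}
P(x) = C_* \abs{x}^{2-n} + O(\abs{x}^{1-n}) \qquad \text{as } \abs{x} \to \infty,
\end{align*}
for a unique constant $C_*$; in particular $\abs{x}^{n-2} P(x) \to C_*$. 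Concretely, applying the Kelvin transform $\tilde P(y) = \abs{y}^{2-n} P(y/\abs{y}^2)$ turns $P$ into a function harmonic in a punctured neighborhood of the origin and bounded (since $P$ is bounded), hence by the removable singularity theorem $\tilde P$ extends harmonically across $0$, and $C_* := \tilde P(0)$ is exactly the claimed limit.

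The constant $C_*$ depends only on $K$ (and $n$), since $P$ itself is determined solely by $K$ through \eqref{exteriorDirichletProb}; this justifies the notation $C_* = C_*(K)$. Moreover $C_* > 0$: by the strong maximum principle $P > 0$ in $\R^n \setminus K$, and comparing $P$ with the explicit radial harmonic functions $\underline c\,\abs{x}^{2-n}$ and $\overline c\,\abs{x}^{2-n}$ that agree with the min and max of $P$ on a sphere $\partial B_{R_0}(0)$ enclosing $K$ (both of these tend to $0$ at infinity, so the comparison is valid on $\R^n \setminus B_{R_0}(0)$) pins $C_*$ between two positive multiples of $R_0^{n-2}$. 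This strict positivity, while not literally part of the statement, is what makes the constant meaningful for characterizing the singularity.

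For $n = 2$ the same Kelvin-transform argument applies but gives a different conclusion: the natural fundamental solution is $\log$, so one should not expect $\abs{x}^{0} P(x) = P(x)$ to have a nonzero ``singular'' coefficient in the same sense. In fact, for $n = 2$, $P$ bounded and harmonic outside $B_{R_0}(0)$ forces $P(x) \to C_*$ for some constant $C_*$ (the Kelvin transform is $\tilde P(y) = P(y/\abs{y}^2)$, which is bounded and harmonic in a punctured disk, hence extends, with $\tilde P(0) = C_*$), so the statement $\lim_{\abs{x}\to\infty} \abs{x}^{n-2} P(x) = C_*$ still reads correctly as $\lim P(x) = C_*$. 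The main obstacle, such as it is, is simply to invoke the right form of the behavior-at-infinity theorem and to be careful that in $n = 2$ ``bounded'' (rather than decaying) is the correct hypothesis and yields a possibly different interpretation of $C_*$; there is no serious analytic difficulty, only the bookkeeping of the Kelvin transform and the removable singularity theorem in each dimension.
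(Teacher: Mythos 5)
The paper does not give its own proof---it simply cites Quir\'os--V\'azquez \cite{QV}*{Lemma 4.5}---so there is no internal argument to compare against. Your Kelvin-transform-plus-removable-singularity route is the standard and correct way to prove this statement, and I would expect it to be close to what \cite{QV} does.

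There is, however, one genuine slip in your justification for $n \geq 3$. You write that $\tilde P(y) = \abs{y}^{2-n} P(y/\abs{y}^2)$ is ``bounded (since $P$ is bounded).'' That does not follow: boundedness of $P$ gives only $\tilde P(y) = O(\abs{y}^{2-n})$, which is exactly the order of the fundamental solution and hence \emph{not} enough for removability. Indeed, if $P$ were bounded but did not decay (say $P \to c \neq 0$ at infinity), then $\tilde P$ would genuinely blow up like $c\abs{y}^{2-n}$. Worse, asserting that $\tilde P$ is bounded is essentially circular, since a finite value for $\tilde P(0)$ is precisely the conclusion of the lemma. What you should use instead is the hypothesis $P(x) \to 0$ as $\abs{x} \to \infty$: it gives
\begin{align*}
\abs{y}^{n-2}\,\tilde P(y) = P\bigl(y/\abs{y}^2\bigr) \longrightarrow 0 \quad \text{as } \abs{y} \to 0,
\end{align*}
i.e.\ $\tilde P(y) = o(\abs{y}^{2-n})$, which is exactly the removable-singularity criterion for harmonic functions in $n \geq 3$. (Alternatively, invoke B\^ocher's theorem to write $\tilde P = c\abs{y}^{2-n} + h$ with $h$ harmonic across $0$, and then use $P \to 0$ to conclude $c = 0$.) With that correction, $\tilde P$ extends harmonically to a full neighborhood of the origin, and $C_* = \tilde P(0) = \lim_{\abs{x}\to\infty}\abs{x}^{n-2}P(x)$ as you claim. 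The $n = 2$ case you handled correctly, since there boundedness of $\tilde P(y) = P(y/\abs{y}^2)$ really is inherited from boundedness of $P$, and boundedness suffices for removability in two dimensions.
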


\begin{proof}
See Lemma 4.5 in \cite{QV}.
\end{proof}

Finally, we also need to improve on the convergence result of radially symmetric solutions from \cite{QV}:
\begin{lemma}
\label{th:radialConvergence}
Let $p(x,t)$ be a radially symmetric solution \eqref{eq:radialSolution} of the Hele-Shaw problem satisfying the boundary conditions \eqref{eq:radialBoundaryCondition} with constants $A,\ a,\ b$ and $L$. Then the rescaled solutions $p^\la(x,t)$ converge locally uniformly on the set $(\Rd \setminus \set{ 0}) \times [0, \infty)$ to the solution of the Hele-Shaw problem with a point source,
\begin{align*}
V(x,t) &= V_{A,L}(x,t) =
\bcs 
A \pth{\abs{x}^{2-n} - \rho^{2-n}(t)}_+,& n \geq 3\\
 A\pth{\log \frac{\rho(t)}{\abs{x}}}_+, & n = 2,
\ecs
\end{align*}
where
\begin{align*}
\rho(t) = \rho_L(t) =
\bcs
 \pth{\frac{A n (n-2) t}{L}}^{1/n}, & n \geq 3\\
 \pth{\frac{2 A t}{L}}^{1/2},& n = 2
\ecs
\end{align*}
\end{lemma}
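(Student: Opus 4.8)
The plan is to reduce the statement to the locally uniform convergence of the rescaled free‑boundary radius. Consider first $n\geq 3$. Plugging \eqref{radialSolution} into the rescaling \eqref{eq:rescaling}, using $\la^{(n-2)/n}(s)_+ = (\la^{(n-2)/n}s)_+$ together with $\la^{(n-2)/n}\abs{\la^{1/n}x}^{2-n} = \abs{x}^{2-n}$ and $\la^{(n-2)/n}R^{2-n}(\la t) = r_\la^{2-n}(t)$, where
\begin{align*}
r_\la(t) := \la^{-1/n} R(\la t),
\end{align*}
one obtains the explicit formula
\begin{align*}
p^\la(x,t) = \kappa_\la(t)\, A\pth{\abs{x}^{2-n} - r_\la^{2-n}(t)}_+, \qquad \kappa_\la(t) := \frac{a^{2-n}}{a^{2-n} - R^{2-n}(\la t)},
\end{align*}
in which $r_\la(t)$ is precisely the radius of $\Gamma_t(p^\la)$. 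For $n = 2$ the same computation with the $\mathcal R(\la)$-rescaling and $r_\la(t) := R(\la t)/\mathcal R(\la)$ gives
\begin{align*}
p^\la(x,t) = \kappa_\la(t)\, A\pth{\log\frac{r_\la(t)}{\abs{x}}}_+, \qquad \kappa_\la(t) := \frac{\log\mathcal R(\la)}{\log(R(\la t)/a)}.
\end{align*}
Thus it suffices to prove (i) $r_\la\to\rho$ locally uniformly on $[0,\infty)$, and (ii) $\kappa_\la\to 1$ uniformly on the part of any fixed compact set where the bracketed term is nonzero; the conclusion then follows from the uniform continuity of the radial profile.

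For (i), fix $T>0$. Since the free boundary expands, $R'(t) = \ov L \abs{Dp} > 0$ on $\Gamma_t(p)$ by \eqref{eq:radialBoundaryCondition} (the harmonic profile has nonvanishing radial derivative there), so $R$ is strictly increasing and hence $t\mapsto r_\la(t)$ is increasing for every $\la$. On an interval $[\eta,T]$ with $\eta>0$ we have $\la t\to\infty$ uniformly, so the asymptotics \eqref{eq:radiusBehavior3} (resp. \eqref{eq:radiusBehavior2} together with \eqref{eq:rescaling2Asymptotics}) give, after a short computation with the scaling relations, $r_\la(t) = \rho(t)(1+o(1))$ uniformly on $[\eta,T]$. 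On $[0,\eta]$, monotonicity gives $0\leq r_\la(t)\leq r_\la(\eta)$, which tends to $\rho(\eta)$ by the previous step, while $0\leq\rho(t)\leq\rho(\eta)$; since $\rho$ is continuous with $\rho(0)=0$, choosing $\eta$ small and then $\la$ large makes $\sup_{[0,T]}\abs{r_\la-\rho}$ as small as desired. This proves (i).

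For (ii), let $E$ be compact in $\pth{\Rd\setminus\set 0}\times[0,\infty)$, so $\delta\leq\abs x\leq M$ and $0\leq t\leq T$ on $E$. Where the bracketed term in $p^\la$ is nonzero we have $\abs x < r_\la(t)$, hence $r_\la(t) > \delta$, hence by (i) $\rho(t) > \delta/2$ for $\la$ large, i.e. $t\geq\eta_0 := \rho^{-1}(\delta/2)>0$; then $\la t\geq\la\eta_0\to\infty$ and $R(\la t)\to\infty$, which forces $\kappa_\la(t)\to1$ uniformly on that region (for $n=2$ via $\log(R(\la t)/a)\sim\tfrac12\log\la\sim\log\mathcal R(\la)$, using \eqref{eq:radiusLogBehavior2} and \eqref{eq:rescaling2LogAsymptotics}). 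Finally, the profile map $\Phi(r,s) := A(r^{2-n}-s^{2-n})_+$ for $n\geq3$ (resp. $A(\log(s/r))_+$ for $n=2$), extended by $\Phi(r,0):=0$, is bounded and uniformly continuous on $[\delta,M]\times[0,\rho(T)+1]$, and $V(x,t)=\Phi(\abs x,\rho(t))$. Writing $p^\la(x,t) = \kappa_\la(t)\,\Phi(\abs x, r_\la(t))$ and noting that $\Phi(\abs x, r_\la(t))$ vanishes wherever $\kappa_\la$ is not yet close to $1$, we get $\abs{p^\la - \Phi(\abs x, r_\la(t))}\to0$ uniformly on $E$; combined with $\Phi(\abs x, r_\la(t))\to\Phi(\abs x,\rho(t))$ uniformly on $E$ (from (i) and uniform continuity of $\Phi$, all relevant values of $r_\la(t)$ lying in $[0,\rho(T)+1]$), this yields $p^\la\to V$ uniformly on $E$.

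The main obstacle is the non-uniformity at $t=0$ in step (i): for $t$ near $0$ the argument $\la t$ of $R$ need not be large, so the far‑field asymptotics \eqref{eq:radiusBehavior} say nothing there. The resolution is that near $t=0$ both $r_\la(t)$ and $\rho(t)$ are forced to be small — the former by monotonicity of $R$ (so $r_\la(t)\leq r_\la(\eta)$), the latter by continuity of $\rho$ at $0$ — so their difference is controlled with no fine information about $R$ on bounded time intervals. Everything else is bookkeeping with the explicit radial formulas \eqref{eq:radialSolution} and the asymptotic relations \eqref{eq:radiusBehavior}, \eqref{eq:radiusLogBehavior2}, \eqref{eq:rescaling2Asymptotics}, \eqref{eq:rescaling2LogAsymptotics} already recorded above.
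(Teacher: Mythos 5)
Your proof is correct and follows essentially the same route as the paper's: both rest on the large-time asymptotics \eqref{eq:radiusBehavior}, \eqref{eq:radiusLogBehavior2}, \eqref{eq:rescaling2Asymptotics}, \eqref{eq:rescaling2LogAsymptotics} of the rescaled free-boundary radius and handle the region where $\la t$ need not be large by the monotonicity of $R$ together with continuity of $\rho$ at $0$. The only difference is organizational: you establish uniform convergence $r_\la \to \rho$ on all of $[0,T]$ and then pass through the uniformly continuous profile $\Phi$, whereas the paper fixes $\ve$, splits $[0,T]$ at $t_0 = \rho^{-1}(\ve/2)$, and checks directly that both $p^\la$ and $V$ vanish on $\set{\abs{x} \geq \ve,\ 0 \leq t \leq t_0}$ for $\la$ large.
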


\begin{proof}
We shall show uniform convergence on sets $\set{(x,t): \abs{x} \geq \ve,\ 0 \leq t \leq T}$ for some $\ve, \ T > 0$. Let $t_0 = \rho^{-1}(\ve/2)$.  Starting with $n \geq 3$, we have

\begin{align*}
p^\la(x,t) = \frac{A a^{2-n}\pth{\abs{x}^{2-n} - \pth{\frac{R(\la t)}{(\la t)^{1/n}} t^{1/n}}^{2-n}}_+}{a^{2-n} - R^{2-n}(\la t)}.
\end{align*}

There are two cases:
\begin{enumerate}
\item $t_0 \leq t \leq T$

Due to \eqref{eq:radiusBehavior}, we have
\begin{align*}
\frac{R(\la t)}{\la^{1/n}} = \frac{R(\la t)}{(\la t) ^{1/n}} t^{1/n} &\to \pth{\frac{A a^{n-2} n (n-2) t}{L}}^{1/n} = \rho(t),\\
R^{2-n}(\la t) &\to 0,
\end{align*}
as $\la \to \infty$ uniformly on $t_0 \leq t \leq T$. That shows uniform convergence $p^\la \to V$ on $\set{\abs{x} \geq \ve, \ t_0 \leq t \leq T}$.
\item $0 \leq t \leq t_0$

Clearly $V(x,t) = 0$ in $\set{\abs{x} \geq \ve,\ 0 \leq t \leq t_0}$. Since
\begin{align*}
\frac{R(\la t)}{\la^{1/n}} \leq \frac{R(\la t_0)}{\la^{1/n}} < \rho(t_0) + \frac{\ve}{2} = \ve
\end{align*} 
for all $\la$ large enough, we see that $p^\la = 0 = V$ on $\set{\abs{x} \geq \ve, \ 0 \leq t \leq t_0}$ for all $\la$ large. 
\end{enumerate}

In dimension $n=2$, the rescaling yields
\begin{align*}
p^\la(x,t) = A \frac{\pth{\log \frac{R(\la t)}{\mathcal{R}(\la) \abs{x}}}_+}{\frac{\log \frac{R(\la t)}{a}}{\log \mathcal{R}(\la)}}.
\end{align*}
Again, we split the proof in two cases:
\begin{enumerate}
\item $ t_0 \leq t \leq T$

Rewriting $R(\la t)/\mathcal{R}(\la)$ and using \eqref{eq:radiusBehavior2} and \eqref{eq:rescaling2Asymptotics}, we obtain
\begin{align*}
\frac{R(\la t)}{\mathcal{R}(\la)} &= \frac{R(\la t)}{\pth{\frac{\la t}{\log \la t}}^{1/2}} \cdot \frac{\pth{\frac{\la t}{\log \la t}}^{1/2}}{\pth{\frac{\la}{\log \la}}^{1/2}} \cdot \frac{\pth{\frac{\la}{\log \la}}^{1/2}}{\mathcal{R}(\la)} \to 2\pth{\frac{A}{L}}^{1/2} \cdot t^{1/2} \cdot \frac{1}{\sqrt{2}} \\
&= \pth{\frac{2 A t}{L}}^{1/2} 
\end{align*} 
as $\la \to \infty$ uniformly for $t_0 \leq t \leq T$.

Similarly, \eqref{eq:radiusLogBehavior2} and \eqref{eq:rescaling2LogAsymptotics} lead to
\begin{align*}
\frac{\log \frac{R(\la t)}{a}}{\log \mathcal{R}(\la)} = \frac{\log R(\la t) - \log a}{\log \la t} \cdot \frac{\log \la t}{\log \la} \cdot \frac{\log \la}{\log \mathcal{R}(\la)} \to \half \cdot 1 \cdot 2 = 1
\end{align*}
as $\la \to \infty$ uniformly for $t_0 \leq t \leq T$.
This proves uniform convergence of $p^\la \to V$ on $\set{\abs{x} \geq \ve, \ t_0 \leq t \leq T}$.
\item $0 \leq t \leq t_0$

Argue as in the case $n \geq 3$, (b).
\end{enumerate}
\end{proof}

\section{The limit problem}
Our current task is a characterization of the limit of rescaled weak solutions $u^\la$ as $\la \to \infty$. We want to show that the limit satisfies a certain obstacle problem that can be interpreted as a Hele-Shaw problem with a point source. Existence and uniqueness of such a problem in 2 dimensions was studied in \cite{CJ}. Our situation requires extending the definition to all dimensions $n \geq 3$.

First define $U_{A,L}(x,t)$ to be the Baiocchi transform of $V_{A,L}(x,t)$, introduced in Lemma~\ref{th:radialConvergence} (see proof of Theorem~\ref{th:LPwellposed} for derivation):
\begin{align}
\label{eq:U}
&U_{A, L}(x,t) =\nonumber\\
&=\bcs
\pth{A t |x|^{2-n} + \frac{L}{2n}|x|^2 - \half \pth{A n t}^{2/n} \pth{\frac{n-2}{L}}^{(2-n)/n}}_+ & \text{if } n \geq 3,\\
\pth{\frac{A}{2}t \log \frac{2At}{L e \abs{x}^2} + \frac{L\abs{x}^2}{4}}_+ & \text{if } n = 2.
\ecs
\end{align}

We say that $U(x,t)$ is a solution of the \emph{limit problem} if for every $t \in [0,T]$, $U(\cdot, t)$ satisfies the following obstacle problem:
\begin{equation}
\label{limitProblem}
\left\{
\begin{aligned}
w &\in \mathcal{K}_t,\\
a(w, \phi) &\geq \ang{-L, \phi}, &\forall &\phi \in V,\\
a(w, \psi w) &= \ang{-L, \psi w}, &\forall &\psi \in W,
\end{aligned}
\right.
\end{equation}
where 
\begin{align*}
\mathcal{K}_t = \Biggl\{\vp &\in \bigcap_{\substack{\e > 0}} H^1(\Rn \setminus B_\e) \cap C(\Rn \setminus B_\e):\\
&\vp \geq 0, \ \lim_{|x| \to 0} \frac{\vp(x)}{U_{A,L}(x,t)} = 1\Biggr\},
\end{align*}
\begin{subequations}
\label{eq:limitProblemVW}
\begin{align}
V = \set{\phi \in H^1(\Rn),\ \phi \geq 0,\ \phi = 0 \text{ on } B_\ve(0) \text{ for some } \ve > 0}.
\end{align}
and
\begin{align}
W = V \cap C^1(\Rn).
\end{align}
\end{subequations}

\begin{theorem}
\label{th:LPwellposed}
For given $A > 0$ and $L > 0$, $U_{A,L}(x,t)$ defined in \eqref{eq:U} is the unique solution of the problem \eqref{limitProblem} 

\end{theorem}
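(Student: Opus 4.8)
The plan is to verify in two steps that $U_{A,L}(\cdot,t)$ solves \eqref{limitProblem}, and then to prove uniqueness by a comparison/energy argument adapted to the singular boundary condition near the origin. For the first step, I would fix $t>0$, write $\rho(t)=\rho_L(t)$ as in Lemma~\ref{th:radialConvergence}, and simply compute. Away from the free boundary $\{|x|=\rho(t)\}$ and away from the origin, one checks that $-\Delta U_{A,L}=-L$ on $\{0<|x|<\rho(t)\}$ (the term $At|x|^{2-n}$ is harmonic for $n\geq3$, or $\propto\log|x|$ for $n=2$; the term $\tfrac{L}{2n}|x|^2$ contributes $-\Delta(\tfrac{L}{2n}|x|^2)=-L$, and for $n=2$ likewise $-\Delta(\tfrac{L|x|^2}{4})=-L$), while $U_{A,L}\equiv 0$ outside $B_{\rho(t)}$. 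The normalizing constant in \eqref{eq:U} is chosen precisely so that $U_{A,L}$ and $|DU_{A,L}|$ vanish on $\{|x|=\rho(t)\}$, i.e. the $C^{1,1}$ matching of the obstacle problem; this is the derivation alluded to in the statement, and I would present it as a short explicit calculation. One also checks $\lim_{|x|\to0}U_{A,L}(x,t)/U_{A,L}(x,t)=1$ trivially, so $U_{A,L}(\cdot,t)\in\mathcal K_t$. Then the variational inequality $a(U_{A,L},\phi)\geq\ang{-L,\phi}$ for $\phi\in V$ follows by integration by parts on $\Rn\setminus B_\ve$ (legitimate since $\phi$ vanishes near $0$): the boundary term at $|x|=\rho(t)$ drops because $|DU_{A,L}|=0$ there, and on $\{U_{A,L}>0\}$ one gets equality, on $\{U_{A,L}=0\}$ one gets $0\geq\ang{-L,\phi}|_{\{U=0\}}$ since $L>0$ and $\phi\geq0$, giving $\geq$ overall. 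The equality $a(U_{A,L},\psi U_{A,L})=\ang{-L,\psi U_{A,L}}$ for $\psi\in W$ holds because $\psi U_{A,L}$ is supported in $\cl{\{U_{A,L}>0\}}$, so only the region where $-\Delta U_{A,L}=-L$ contributes and the boundary terms vanish by the $C^1$ matching.

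For uniqueness, suppose $U$ is another solution. The standard trick is to test one inequality against the other. Given two solutions $U_1,U_2$, I want $\phi=(U_2-U_1)_+$ or a truncation thereof to be an admissible test function in $V$; the obstruction is that $U_2-U_1$ need not vanish near the origin — but since both $U_i$ have the \emph{same} prescribed singular profile $\lim_{|x|\to0}U_i(x,t)/U_{A,L}(x,t)=1$, the difference $U_2-U_1=o(U_{A,L})$ as $|x|\to0$, which for $n\geq3$ is $o(|x|^{2-n})$ and for $n=2$ is $o(|\log|x||)$. I would exploit this by multiplying $(U_2-U_1)_+$ by a radial cutoff $\eta_\ve$ that is $0$ on $B_\ve$ and $1$ outside $B_{2\ve}$, so that $\eta_\ve(U_2-U_1)_+\in V$; plugging this into the variational inequality for $U_1$ and, symmetrically, $-\eta_\ve(U_2-U_1)_+$-type test functions requires care — more precisely, I would use the second relation (the equality with $\psi\in W$) to handle the region where $U_i>0$ and the inequality to handle the coincidence sets, arriving after summation at $a(U_1-U_2,\eta_\ve(U_1-U_2)_+)\leq(\text{error from }D\eta_\ve)$. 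The error term is $\int_{B_{2\ve}\setminus B_\ve}|D\eta_\ve|\,|U_1-U_2|\,|D(U_1-U_2)_+|$; since $|D\eta_\ve|\lesssim\ve^{-1}$, $|U_1-U_2|=o(|x|^{2-n})$, and $|D(U_1-U_2)_+|$ is controlled in $L^2$ of the annulus by elliptic estimates (both $U_i$ solve $-\Delta U_i=-L$ near $0$ up to the lower-order singular harmonic part), one checks this error $\to0$ as $\ve\to0$, using that the annulus $B_{2\ve}\setminus B_\ve$ has measure $\asymp\ve^n$. Letting $\ve\to0$ yields $a(U_1-U_2,(U_1-U_2)_+)\leq0$, hence $D(U_1-U_2)_+\equiv0$, and since $(U_1-U_2)_+\in H^1(\Rn\setminus B_\ve)$ vanishes at infinity (both solutions are compactly supported by the bound of Lemma~\ref{th:boundaryBound}-type arguments and \eqref{eq:U}), we get $U_1\leq U_2$; by symmetry $U_1=U_2$.

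The main obstacle I anticipate is making the near-origin test-function manipulation rigorous: the functions in $\mathcal K_t$ are only in $\bigcap_\ve H^1(\Rn\setminus B_\ve)$ and blow up at $0$, so one cannot naively subtract them, and the cutoff argument must be combined correctly with \emph{both} the variational inequality (against $V$) and the equality (against $W$) — the role of the $W$-equality is exactly to recover information on $\{U>0\}$ that a one-sided inequality would lose. A secondary technical point is justifying the elliptic $L^2$-gradient bound on shrinking annuli for the difference, which I would get by subtracting off the explicit singular harmonic profile $At|x|^{2-n}$ (resp. the log) from each $U_i$ so that the remainders are bounded $H^1_{loc}$ solutions of $-\Delta(\cdot)=-L$ near $0$, to which interior estimates apply uniformly. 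Once these two points are handled, both existence (by the explicit computation) and uniqueness follow, proving Theorem~\ref{th:LPwellposed}.
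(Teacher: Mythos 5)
Your existence verification is essentially the same as the paper's: compute $\Delta U_{A,L}=L$ on $\{U_{A,L}>0\}$, use the $C^{1,1}$ matching at $\{|x|=\rho(t)\}$, and integrate by parts on $\Rn\setminus B_\ve$; that part is fine. Your uniqueness argument, however, is a genuinely different and substantially more delicate route than the paper's. The paper first proves (Lemma~\ref{limitProblemSatisfiesObstacleProblem}) that any solution $w$ of \eqref{limitProblem}, restricted to $\Rn\setminus B_a$ for any $a>0$, solves the \emph{standard} obstacle problem with Dirichlet data $w$ on $\partial B_a$; then it proves a comparison lemma (Lemma~\ref{comparisonLimitProblem}): if $A_1<A_2$ and $L_1\geq L_2$ then $w_1\leq w_2$, where the ordering on $\{|x|\leq a\}$ for small $a$ comes from the prescribed asymptotics and the \emph{strict} gap $A_1<A_2$, and the standard obstacle comparison on $\Rn\setminus B_a$ finishes. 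Uniqueness then follows by squeezing $U_{A-\ve,L}\leq U\leq U_{A+\ve,L}$ and sending $\ve\to 0$. The $\ve$-perturbation in $A$ is what produces an ordering near the origin with room to spare; your direct energy argument compares two solutions with the \emph{same} singular profile, so it has no such room, and that is precisely what forces you into the cutoff error estimate.

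That error estimate is where there is a real gap. The definition of $\mathcal K_t$ only gives $U_1-U_2=o(U_{A,L})$ with no rate, and the generic gradient scaling $|D(U_1-U_2)|\sim|x|^{1-n}$ would make your error term $o(\ve^{2-n})$, which diverges for $n\geq 3$. What saves the argument is that near the origin both solutions are positive, hence (after extracting $\Delta U_i=L$ pointwise from the variational $W$-equality, which itself needs a line) $U_1-U_2$ is harmonic on a punctured ball and $o(|x|^{2-n})$; by the removable singularity theorem for harmonic functions it extends harmonically across $0$, so $|D(U_1-U_2)|$ is bounded near $0$ and the error is $O(\ve^{n-1})\to 0$. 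You gesture at this with the ``subtract $At|x|^{2-n}$'' remark, but the removable-singularity step is the mathematical content and must be invoked explicitly; as written, ``controlled in $L^2$ of the annulus by elliptic estimates'' is not enough to close the estimate. A second, smaller gap is the test-function bookkeeping: you announce that combining the $V$-inequality with the $W$-equality yields $a(U_1-U_2,\eta_\ve(U_1-U_2)_+)\leq(\text{error})$ but do not carry it out, and this is exactly where the two-relation structure of \eqref{limitProblem} is nontrivial. The paper sidesteps both difficulties by reducing to a one-relation obstacle problem on $\Rn\setminus B_a$, which is the cleaner route.
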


\begin{proof}
We first verify that $U_{A,L}$ is a solution. It can be found by integrating the function $V_{A,L}(x,t)$ from Lemma~\ref{th:radialConvergence}, 
\begin{align*}
U(x,t) &= \int_0^t V(x,s) \d s = \bcs
0 & t \leq s(x),\\
\int_{s(x)}^t V(x,t) \d t, & t > s(x) 
\ecs,
\end{align*}
where 
$$
s(x) = \rho^{-1}(\abs{x}).
$$
It is clear that $\Delta U = L$ in $U > 0$ and $U = \pd{U}{\nu} = 0$ on $\Gamma_t(U)$, i.e. when $|x| = \rho(t)$. A straightforward application of the Green's theorem then yields that $U$ satisfies the variational equality and inequality in \eqref{limitProblem}.

To prove uniqueness, we will use comparison for an obstacle problem and for this we need the following observation:
\begin{lemma}
\label{limitProblemSatisfiesObstacleProblem}
If $w$ satisfies the obstacle problem \eqref{limitProblem}, it also for every $a > 0$ satisfies the obstacle problem 
$$
a_\Om(w, \vp - w) \geq \ang{-L, \vp - w}_\Om \qquad \text{for all } \vp \in \mathcal{K},
$$
on $\Om = \Rn \setminus B_a$ , where
$$
\mathcal{K} = \set{\vp \in H^1(\Om): \vp \geq 0,\ \vp = w \text{ on } \partial B_a}.
$$
\end{lemma}

\begin{proof}
Fix $\vp \in \mathcal{K}$. 
Pick $0 < \ve < a$. Find $\psi \in C^1(\Rn)$, $0 \leq \psi \leq 1$ such that $\psi =0$ on $B_{a-\ve}(0)$ and $\psi = 1$ on $B_a(0)^c$. Then $\psi \in W$. Define 
$$
\tilde \vp = \bcs
\vp & \text{on } B_a(0)^c,\\
w &\text{otherwise}.
\ecs
$$
Since $\vp|_{\partial B_a(0)} = w|_{\partial B_a(0)}$, $\tilde \vp \in H^1(\Rn \setminus B_{a-\ve})$. Define
$$
\phi = (\psi -1) w + \tilde \vp \in H^1(\Rn).
$$
Clearly $\phi \geq 0$ and $\phi = 0$ for $|x| \leq a - \ve$. Therefore $\phi \in V$. We have from problem \eqref{limitProblem}
$$
a_\Om(w, \vp - w) = a(w, \phi - \psi w) \geq \ang{-L, \phi - \psi w} = \ang{-L, \vp- w}_\Om.
$$
This holds for every $\vp \in \mathcal{K}$.
\end{proof}

The main tool is the following lemma:
\begin{lemma}[Comparison for the limit problem]
\label{comparisonLimitProblem}
Let $w_1$, $w_2$ be two solutions of the obstacle problem \eqref{limitProblem} for some $t > 0$ with $A_1$, $L_1$ resp. $A_2$, $L_2$. If $0 < A_1 < A_2$ and $L_1 \geq L_2 > 0$ then
$$
w_1 \leq w_2 \qquad \text{for all } x \neq 0.
$$
\end{lemma}

\begin{proof}
Let $\ve = \frac{A_2 - A_1}{3} > 0$. If $n \geq 3$, there exists $a > 0$ such that
$$
\abs{\frac{w_1(x)}{|x|^{2-n}} - A_1 t} < \ve \quad \text{and} \quad \abs{\frac{w_2(x)}{|x|^{2-n}} - A_2 t} < \ve \quad \text{for all } |x| \leq a.
$$
We can replace $\abs{x}^{2-n}$ by $-\half\log \abs{x}$ in dimension $n = 2$.
In particular, $w_1(x) \leq w_2(x)$ in $|x| \leq a$. Lemma \ref{limitProblemSatisfiesObstacleProblem} implies that $w_1$ and $w_2$ satisfy the obstacle problem on $\Omega = B_a(0)^c$,
$$
a(w_i, \vp - w_i)_\Omega \geq \ang{-L_i, \vp - w_i}_\Omega \qquad \forall \vp \in \mathcal{K}_i,\ i=1,2,
$$
where $\mathcal{K}_i = \set{\vp \in H^1(\Omega): \vp \geq 0,\ \vp = w_i \text{ on } |x| =a}$. Now we can use the comparison for the obstacle problem, see Corollary 5.2, chapter 4 in \cite{Rodrigues}, and that gives 
$$
w_1 \leq w_2 \qquad \text{in } \Omega.
$$
\end{proof}

Now we can finish proof of Theorem \ref{th:LPwellposed}.

Lemma \ref{comparisonLimitProblem} implies that for any $\ve > 0$ we can compare 
$$
U_{A - \ve, L}(x, t) \leq U(x, t) \leq U_{A+\ve, L}(x, t),,
$$
since $U_{A,L} (x, t)$ are solutions of the limit problem \eqref{limitProblem}.

After taking limit $\ve \to 0$ we conclude that
$$
U(x,t) = U_{A,L} (x, t).
$$
\end{proof}

\section{Uniform convergence of $u^\la$}

The following lemma was proven in \cite{KM}:
\begin{lemma}[cf. \cite{KM}*{Lemma 4.1}]
\label{th:average}
For given $g$ satisfying \eqref{boundong} and \eqref{statergodicg}, there exists a constant, denoted $\ang{\ov g}$, such that if $\Omega \subset \Rd$ is a bounded measurable set and if $\set{u^\ve}_{\ve >0}\subset L^2(\Omega)$ is a collection of functions such that $u^\ve \to u$ strongly in $L^2(\Omega)$ as $\ve \to 0$, then
\begin{align}
\label{eq:convToMean}
\lim_{\ve\to 0} \int_\Omega \ov{g(x/\ve, \om)} u^\ve(x) \dx = \int_\Omega \ang{\ov g} u(x) \dx \qquad \text{a.e. $\om$}
\end{align}
\end{lemma}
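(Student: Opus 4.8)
The plan is to carry out the standard ``weak-times-strong'' limit passage of stochastic homogenization: show that the oscillating coefficient $\ov{g(\cdot/\ve,\om)}$ converges, for $\om$ outside a single null set, weakly-$*$ in $L^\infty_{loc}(\Rd)$ to a deterministic constant, and then pair this with the strong $L^2$ convergence $u^\ve\to u$. The constant is necessarily
\begin{align*}
\ang{\ov g} := \int_A \ov{g(0,\om)}\,d\mu(\om),
\end{align*}
which is well defined (using joint measurability of $g$, part of the standing assumptions) and satisfies $\ov M \le \ang{\ov g}\le \ov m$ by \eqref{boundong}; in particular $\ang{\ov g}$, and the null set identified below, are independent of $\Om$ and of the family $\set{u^\ve}$.

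First I would set up the averaging. By \eqref{statergodicg} we may write $\ov{g(y,\om)} = F(\tau_y\om)$ with $F := \ov{g(0,\cdot)}\in L^\infty(A,\mu)$, and the multiparameter pointwise ergodic theorem gives, for a.e.\ $\om$,
\begin{align*}
\ov{c_1\cdots c_n}\int_{\prod_i[0,c_i]} F(\tau_y\om)\dy \longrightarrow \ang{\ov g} \qquad \text{as } \min_i c_i\to\infty
\end{align*}
with the ratios $c_i/c_j$ fixed, for every $c$ with positive rational entries; intersecting these countably many null sets gives one full-measure set of $\om$ on which the limit holds for all such $c$. Fixing such an $\om$ and a box $Q=\prod_i[a_i,b_i]$ with rational corners, the scaling $x=\ve y$ together with an inclusion--exclusion over the origin-anchored sub-boxes of $Q$ yields $\int_Q \ov{g(x/\ve,\om)}\dx = \ve^n\int_{Q/\ve}F(\tau_y\om)\dy \to \abs{Q}\,\ang{\ov g}$. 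Since finite linear combinations of such box indicators are dense in $L^1_{loc}(\Rd)$ and $\no{\ov{g(\cdot/\ve,\om)}}_{L^\infty(\Rd)}\le \ov m$ uniformly in $\ve$, it follows that $\ov{g(\cdot/\ve,\om)}\wcstar\ang{\ov g}$ in $L^\infty_{loc}(\Rd)$; equivalently, $\int_\Om \ov{g(x/\ve,\om)}\,h\dx \to \ang{\ov g}\int_\Om h\dx$ for every bounded measurable $\Om$ and every $h\in L^1(\Om)$.

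It then remains to combine the two convergences. Writing
\begin{align*}
\int_\Om \ov{g(x/\ve,\om)}\,u^\ve\dx = \int_\Om \ov{g(x/\ve,\om)}\,(u^\ve-u)\dx + \int_\Om \ov{g(x/\ve,\om)}\,u\dx,
\end{align*}
the first term is bounded by $\ov m\,\abs{\Om}^{1/2}\,\no{u^\ve-u}_{L^2(\Om)}$ by Cauchy--Schwarz and hence tends to $0$, while the second term tends to $\ang{\ov g}\int_\Om u\dx$ by the previous paragraph (note $u\in L^2(\Om)\subset L^1(\Om)$ as $\Om$ is bounded); this proves \eqref{eq:convToMean}. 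The main obstacle is the a.e.-$\om$ bookkeeping behind the weak-$*$ convergence: the rescaled boxes $Q/\ve$ not only grow but also translate, their centers running off to infinity, so the pointwise ergodic limit cannot be applied to them directly. The remedy is exactly the reduction to boxes anchored at the origin --- for which the ergodic theorem only needs the side lengths to tend to infinity --- together with taking the intersection of the countably many exceptional null sets coming from boxes with rational data. The remaining ingredients are routine functional analysis.
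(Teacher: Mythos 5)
The paper states this lemma as a citation to \cite{KM}*{Lemma 4.1} and gives no proof, so there is no in-text argument to compare with. Your argument is correct and is the standard one for this kind of lemma: show that the oscillating density $\ov{g(\cdot/\ve,\om)}$ converges weakly-$*$ in $L^\infty_{loc}(\Rn)$ to the ensemble mean $\ang{\ov g}=\int_A\ov{g(0,\om)}\,d\mu(\om)$ on a single full-measure set of $\om$ (multiparameter pointwise ergodic theorem on origin-anchored boxes, inclusion--exclusion to obtain general rational boxes, then density of simple functions in $L^1$ with the uniform bound $\no{\ov{g(\cdot/\ve,\om)}}_\infty\le\ov m$), and then pair this with the strong $L^2$ convergence of $u^\ve$ by splitting $\int_\Om \ov{g(x/\ve,\om)}\,u^\ve\dx = \int_\Om \ov{g(x/\ve,\om)}\,(u^\ve-u)\dx + \int_\Om \ov{g(x/\ve,\om)}\,u\dx$ and applying Cauchy--Schwarz to the first term. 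You also correctly track that the exceptional null set must be chosen independently of $\Om$ and of $\set{u^\ve}$, which the countable rational-box reduction delivers; this uniformity in $\om$ is exactly what the paper relies on when it ``fixes $\om$'' at the start of Section 2. Two cosmetic remarks: (i) since $F=\ov{g(0,\cdot)}\in L^\infty\subset L\log^{n-1}L$, the Dunford--Zygmund theorem yields convergence of box averages as all side lengths tend to $\infty$ with no constraint on the ratios $c_i/c_j$, so that extra stipulation is unnecessary; and (ii) the inclusion--exclusion as phrased presumes the box lies in the positive orthant --- to cover general rational boxes you should admit origin-anchored rectangles $\prod_i[\min(0,c_i),\max(0,c_i)]$ with $c_i$ of either sign, to which the ergodic theorem applies unchanged via the group property of $(\tau_y)_{y\in\Rn}$.
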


Recall that in section~2, we fixed $\om$ for which \eqref{eq:convToMean} holds.  The goal of this section is proving the following theorem:
\begin{theorem}
\label{th:uconvergence}
The functions $u^\la$ converge to $U_{A, L}$ as $\la \to \infty$ locally uniformly on $\Rd \setminus \set{0} \times [0, \infty)$, where $U_{A,L}$ is the unique solution of the limit problem from Theorem~\ref{th:LPwellposed}, with $A = C_*$ from Lemma~\ref{constantSingularity}, and $L = \ang{\ov{g}}$ from Lemma~\ref{th:average}.
\end{theorem}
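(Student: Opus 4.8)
The plan is to combine a compactness argument for the rescaled weak solutions $u^\la$ with the uniqueness of the limit problem from Theorem~\ref{th:LPwellposed}. First I would establish uniform bounds on $u^\la$ away from the origin. From Lemma~\ref{th:boundaryBound} we know $0 \le v(x,t) \le C|x|^{2-n}$, and by Lemma~\ref{rescaledBound} this bound is inherited by $v^\la$, hence $0 \le u^\la(x,t) = \int_0^t v^\la(x,s)\,ds \le Ct|x|^{2-n}$; together with $-\Delta u^\la = -\ov{g^\la}\chi_{\Rn\setminus\Om_0^\la}$ in $\Omega(u^\la)$ this yields, via interior elliptic estimates, that $u^\la$ is bounded in $W^{2,p}_{loc}(\Rn\setminus\{0\})$ uniformly in $\la$ on each time slice, and uniformly Lipschitz in $t$ by the Proposition of section~2 (rescaled). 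So along any subsequence $\la_k \to \infty$ we extract a locally uniform limit $u^\infty$ on $(\Rn\setminus\{0\})\times[0,\infty)$.

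The second step is to identify $u^\infty$ as a solution of the limit problem \eqref{limitProblem} with $A=C_*$, $L=\ang{\ov g}$. The variational (in)equalities pass to the limit: for a test function $\phi \in V$, which vanishes on some $B_\ve$, the bilinear form $a(u^\la,\phi)$ converges to $a(u^\infty,\phi)$ by weak $H^1_{loc}$ convergence away from the origin, and the right-hand side $\ang{-\ov{g^\la}\chi_{\Rn\setminus\Om_0^\la},\phi}$ converges to $\ang{-\ang{\ov g},\phi}$ by Lemma~\ref{th:average} (the rescaling $g^\la(x)=g(\la^{1/n}x)$ is exactly $g(x/\ve)$ with $\ve = \la^{-1/n}$, and $\chi_{\Rn\setminus\Om_0^\la}\to 1$ locally since $\Om_0^\la$ shrinks to the origin). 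The equality $a(u^\la,\psi u^\la)=\ang{\cdot,\psi u^\la}$ for $\psi\in W$ passes to the limit similarly, using strong $L^2_{loc}$ convergence of $u^\la$ to handle the product $\psi u^\infty$. The constraint that $u^\infty \in \mathcal{K}_t$, i.e. the singular behavior $\lim_{|x|\to 0} u^\infty(x,t)/U_{A,L}(x,t) = 1$, comes from the near-field limit: by Theorem~\ref{nearFieldLimit}, $v(\cdot,t) \to P$ locally uniformly on $\cl{K^c}$, and Lemma~\ref{constantSingularity} gives $|x|^{n-2}P(x)\to C_*$; translating this through the rescaling \eqref{rescaledU} and integrating in $t$ shows $u^\infty$ has precisely the prescribed singular profile $U_{C_*,\ang{\ov g}}$ near the origin.

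Once $u^\infty$ is shown to solve \eqref{limitProblem}, Theorem~\ref{th:LPwellposed} forces $u^\infty = U_{C_*,\ang{\ov g}}$, independent of the subsequence, so the whole family $u^\la$ converges locally uniformly to $U_{A,L}$. I expect the main obstacle to be the matching of the singular boundary condition at the origin: one must show not merely that $u^\la$ stays comparable to $C t|x|^{2-n}$, but that the coefficient converges to exactly $C_*$ uniformly as $|x|\to 0$, uniformly in $\la$. This requires a careful two-scale argument — using the near-field limit of Theorem~\ref{nearFieldLimit} to control $v$ on balls of fixed radius around $K$, then rescaling — together with a barrier argument (comparison with the radially symmetric solutions of Lemma~\ref{th:radialConvergence}, whose rescaled limits $V_{A,L}$ carry the correct singularity) to squeeze $u^\la$ between $U_{C_*-\ve,L}$ and $U_{C_*+\ve,L}$ near $0$ for $\la$ large. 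A secondary technical point is ensuring the limit $u^\infty$ is genuinely admissible as a competitor, i.e. lies in the space $\bigcap_\ve H^1(\Rn\setminus B_\ve)\cap C(\Rn\setminus B_\ve)$, which follows from the uniform $W^{2,p}_{loc}$ bound but should be stated explicitly.
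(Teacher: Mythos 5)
Your proposal follows essentially the same route as the paper: a priori bounds on $u^\la$ away from the origin via the rescaled barrier and obstacle-problem regularity, compactness to extract a locally uniform subsequential limit, passage of the variational inequality and equality to the limit using Lemma~\ref{th:average}, identification of the singularity at the origin by squeezing between the rescaled radial sub/supersolutions $U_{C_*\pm\ve,L_\pm}$ via the near-field limit and Lemma~\ref{constantSingularity}, and finally uniqueness from Theorem~\ref{th:LPwellposed} to upgrade to full convergence.

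One small technical point worth tightening: you invoke only \emph{weak} $H^1_{loc}$ convergence when passing the bilinear form to the limit, which suffices for the linear inequality $a(u^\la,\phi)\to a(u^\infty,\phi)$, but the variational \emph{equality} involves the quadratic expression $a(u^\la,\psi u^\la)$, which does not pass to the limit under merely weak $H^1$ convergence. The paper handles this by noting that the uniform $H^2(\Omega_\ve)$ bound, together with compact embedding, upgrades the convergence to \emph{strong} $H^1$ on each $\Omega_\ve$ (and alternatively uses weak lower semicontinuity of $w\mapsto a(w,\psi w)$ combined with the one-sided inequality from the obstacle problem). Since your $W^{2,p}_{loc}$ bound already gives the $H^2_{loc}$ control, the ingredients are present; you just need to state explicitly that the subsequence converges strongly in $H^1_{loc}$ before passing the equality to the limit.
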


\begin{proof}
We need to prove uniform convergence on sets $\set{\abs{x} \geq \ve,\ 0 \leq t \leq T}$. Hence fix $T > 0$ and $\ve > 0$.
The functions $u^\la(\cdot, t)$ satisfy the obstacle problem \eqref{TOP} for each $t \geq 0$ and $\la > 0$.

Using Lemma~\ref{th:boundaryBound}, we can find $\rho_2 > 0$ and $t_0 > 0$ such that 
\begin{align}
\label{largeTimes}
\Omega_t(u) \subset B_{\rho_2 \mathcal{R}(t)} \qquad \text{for all } t \geq t_0
\end{align}
and
\begin{align*}
\Omega_t(u) \subset B_{\rho_2} \qquad \text{for all } 0 \leq t \leq t_0
\end{align*}
Inclusion \eqref{largeTimes} is preserved under rescaling
 when $n \geq 3$.
In dimension $n = 2$, we have
\begin{align*}
\Omega_t(u^\la) \subset   B_{\rho_2 \frac{\mathcal{R}(\la t)}{\mathcal{R}(\la)}} \qquad \text{for all } t \geq t_0/ \la.
\end{align*}
But 
\begin{align*}
\frac{\mathcal{R}(\la t)}{\mathcal{R}(\la)} \leq \frac{\mathcal{R}(\la T)}{\mathcal{R}(\la)} \sim \pth{\frac{T \log \la}{\log \la T}}^{1/2} \to T^{1/2} \quad \text{as } \la \to \infty.
\end{align*}
Thus we can find $\hat \rho > 0$ such that $\Omega_t(u^\la) \subset \Omega := B_{\hat \rho} (0)$ for all $0 \leq t \leq T$ and $\la > 1$. Define $\Omega_\ve := \set{x \in \Omega : \abs{x} \geq \ve}$ and $Q_\ve:= \Omega_\ve \times [0,T]$.

Now find $\la_0 > 1$ such that $K^\la \subset B_{\ve/2}(0)$ for all $\la \geq \la_0$. Since $u^\la(\cdot, t)$ is a solution of the rescaled obstacle problem \eqref{TOP}, we can use the regularity estimates for the obstacle problem from \cite{Rodrigues}*{Theorem 5.2.4} to conclude that
\begin{align*}
\no{\Delta u^\la(\cdot, t)}_{L^q(\Omega_{\ve/2})} \leq \no{\ov{g^\la}}_{L^q(\Omega_{\ve/2})} \qquad \text{for all } 1 \leq q \leq \infty.
\end{align*}

Let $p_+$ be the radially symmetric supersolution from the proof of Lemma~\ref{th:boundaryBound}. Since the rescaled $p_+^\la$ converges uniformly on the set $Q_{\ve/2}$ to the function $V_{A,L}$ as $\la \to \infty$ for some $A$ and $L$ by Lemma~\ref{th:radialConvergence}, we can find a constant $C_1$ such that $p_+^\la \leq C_1$ on $Q_{\ve/2}$ for all $\la \geq \la_0$. Now recall that due to Theorem~\ref{th:equivalency} we can express $u^\la$ as
\begin{align}
\label{ulaint}
u^\la(x,t) = \int_0^t v^\la(x,s) \d s \leq \int_0^t p_+^\la(x,s) \d s \leq C_1 T \quad \text{for } (x,t) \in Q_{\ve/2}. 
\end{align}
In particular, $\no{u^\la(\cdot,t)}_{L^2(\Omega_{\ve/2})}$ is bounded uniformly in $t \in [0,T]$ and $\la \geq \la_0$.

Thus we can use the standard elliptic regularity results (see \cite{LU}, for instance) to find constants $0 < \al < 1$ and $C_2$, independent of $t \in [0,T]$ and $\la \geq \la_0$, such that
\begin{align*}
\begin{aligned}
\no{u^\la(\cdot, t)}_{H^2(\Omega_\ve)} &\leq C_2,\\
\no{u^\la(\cdot, t)}_{C^{0,\al}(\Omega_\ve)} &\leq C_2.
\end{aligned}
\qquad \text{for all } 0 \leq t \leq T,\ \la \geq \la_0.
\end{align*}

Using \eqref{ulaint} again, we have $\abs{u^\la(x,t) - u^\la(x,s)} \leq  C_3 \abs{t-s}$ and we conclude that 
\begin{align*}
\no{u^\la}_{C^{0,\al}(Q_\ve)} \leq C_4(C_2,C_3),
\end{align*}
for all $\la \geq \la_0$. 

Using the standard diagonalization argument and Arzel\`{a}-Ascoli, we can find a subsequence $u^{\la_k}$ that converges locally uniformly on sets $Q_\ve$, $\ve > 0$, to a function $\cl{u}$ as $k \to \infty$.
Due to the compact embedding of $H^1$ in $H^2$, the uniqueness of the limit and the bound on the $H^2$-norm implies that also $u^{\la_k}(\cdot,t) \to \cl{u} (\cdot, t)$ in $H^1$-norm on sets $Q_\ve$ as $\la \to \infty$ for every $0 \leq t \leq T$, $\ve > 0$.

Finally, in the two following lemmas, we will show that $\cl{u}$ is a solution of the limit problem \eqref{limitProblem}. Using the uniqueness of the limit problem, Theorem~\ref{th:LPwellposed}, we conclude that the convergence is not restricted to a subsequence and we have 
$$u^\la \to U_{C_*, \ang{\ov g}}, \qquad \la \to \infty,$$
locally uniformly on $\Rd \setminus \set{0} \times [0, \infty)$, which concludes the proof of Theorem~\ref{th:uconvergence}.

\begin{lemma}
$\cl w = \cl u(\cdot, t)$ satisfies
\begin{align*}
a(\cl w, \phi) &\geq (-L, \phi), &\forall \phi &\in V,\\
a(\cl w, \psi \cl u) &= (-L, \psi \cl w), &\forall \psi &\in W,
\end{align*}
for each $0 \leq t \leq T$, where $L = \ang{\ov{g}}$ defined in Lemma~\ref{th:average} and $V$, $W$ were defined in \eqref{eq:limitProblemVW}.
\end{lemma}

\begin{proof}
Fix $t \in [0,T]$ and $\phi \in V$. We will denote $w^k \equiv u^{\la_k}(x,t)$. 
Then there is $\ve > 0$ such that $\phi = 0$ in $B_\ve(0)$. There is also $k_0$ such that if $k \geq k_0$, we have $\Om_0^{\la_k} \subset B_\ve(0)$. Define sequence $\vp^k = \phi + w^k$. Clearly $\phi = 0$ on $\Gamma_0^{\la_k}$ if $k \geq k_0$ and thus $\vp^k \in \mathcal{K}_t^{\la_k}$. Since $u^{\la_k}$ satisfies \eqref{TOP}, we have
$$
a(w^k, \phi) = a(w^k, \vp^k - w^k) \geq \ang{-\ov{g^{\la_k}}, \vp^k - w^k} = \ang{-\ov{g^{\la_k}}, \phi}. 
$$
The mapping $w \mapsto a(w, \phi)$ is a linear functional on $H^1$ and hence we have $a(w^k, \phi) \to a(\cl w, \phi)$ as $k \to \infty$ because $w^k \to \cl w$ $H^1$-strongly. Lemma~\ref{th:average} also yields $\ang{-\ov{g^{\la_k}}, \phi} \to \ang{-L, \phi}$. Hence we conclude
$$
a(\cl w, \phi) \geq \ang{-L, \phi}.
$$

Now fix $\psi \in W$ such that $0 \leq \psi \leq 1$. There is $B_\ve(0)$ in which $\psi = 0$ and $k_0$ as above. Define $\vp_k = (1- \psi) w^k$. Again $\vp_k  \in \mathcal{K}_t^{\la_k}$ if $k \geq k_0$. Now
$$
a(w^k, \psi w^k) = -a(w^k, \vp^k -w^k) \leq -\ang{-\ov{g^{\la_k}}, \vp^k - w^k} = \ang{-\ov{g^{\la_k}}, \psi w^k}.
$$
The fact that $w^k \to \cl w$ $L^2$-strongly implies $\psi w^k \to \psi \cl w$ in $L^2$ and by Lemma~\ref{th:average} 
$$
\ang{-\ov{g^{\la_k}}, \psi w^k} \to \ang{-L, \psi \cl w}.
$$
Due to the lower semi-continuity of the map $w  \mapsto a(w, \psi w)$ in $H^1$, we also obtain
$$
a(\cl w, \psi \cl w) \leq \liminf_{k\to \infty} a(w^k, \psi w^k) \leq \limto{k} \ang{-\ov{g^{\la_k}}, \psi w^k} = \ang{-L, \psi \cl u}.
$$
Finally, since $\psi \cl w \in V$, we have
\begin{align*}
a(\cl w, \psi \cl w) \leq \ang{-L, \psi \cl w} \leq a(\cl w, \psi \cl w). \qquad \qedhere
\end{align*}
\end{proof}

\begin{lemma}
\label{singularityU}
We have
$$
\lim_{|x|\to 0} \frac{\cl u(x, t)}{U_{C_*,L}(x,t)} = 1,
$$
for every $t \geq 0$, where $C_*$ is the constant from Lemma \ref{constantSingularity}.
\end{lemma}

\begin{proof}
We will use comparison with the sub- and supersolutions from Lemma \ref{subSuperSolutions}.

Let $C_*$ be the constant from Lemma \ref{constantSingularity}. Fix $\ve > 0$. Then by Lemma \ref{constantSingularity} there exists $a > 0$ big enough such that 
$$
\abs{\frac{P(x)}{a^{2-n}} - C_*} < \ha{\ve} \qquad \text{for } |x| = a.
$$
The set $\set{|x| = a}$ is a compact subset of $\Rn \setminus K$ and by the near field limit, Theorem \ref{nearFieldLimit}, there is $t_0 > 0$ large enough so that
$$
\abs{\frac{v(x,t)}{a^{2-n}} - \frac{P(x)}{a^{2-n}}} < \ha{\ve} \qquad \text{for }|x| = a, t \geq t_0.
$$
Therefore we have
$$
\abs{\frac{v(x,t)}{a^{2-n}} - C_*} < \ve \qquad \text{for }|x| = a, t \geq t_0.
$$
Let $p_+$ and $p_-$ be the radially symmetric supersolution, resp. subsolution, satisfying the boundary conditions
\begin{align*}
\frac{p_\pm }{a^{2-n}} &= C_* \pm \ve \qquad\text{on } |x| = a,\\
b_+ &= \max_{x \in \Gamma_{t_0}(v)} |x|,
& b_- &= \min_{x \in \Gamma_{t_0}(v)} |x|,\\
L_+ &= \ov M, & L_-&= \ov m.
\end{align*}
Then by comparison 
$$
p_-(x, t - t_0) \leq v(x,t) \leq p_+(x, t - t_0) \qquad \text{for } |x| \geq a, t \geq t_0.
$$
Lemma~\ref{th:radialConvergence} yields
\begin{align*}
p_\pm^\la \to V_\pm := V_{C_* \pm \ve, L_\pm} \qquad \text{as } \la \to \infty,
\end{align*}
locally uniformly on $\set{\abs{x} > 0,\ t \geq 0}$.

Using the formula \eqref{rescaledU} for $u^\la$ we have for $\la^{1/n} |x| \geq |a|$ and $\la \geq t_0/t$ 
\begin{align*}
u^\la(x,t) &=  \int_0^{t} v^\la(x, s) \d s\\
&\leq \int_0^{t_0/\la} v^\la(x, s) \d s + \int_{t_0/\la}^t p^\la_+(x,s- t_0/\la) \d s\\
&=\int_0^{t_0/\la} v^\la(x, s) \d s + \int_0^{t-t_0/\la} p^\la_+(x,s) \d s
\end{align*}
The first term $\to 0$ as $\la \ri$ since $v^\la(x,s) \leq C|x|^{2-n}$ by Lemma~\ref{th:boundaryBound}. The locally uniform convergence of $p_+^\la$ then implies that the second integral converges to $\int_0^t V_+(x,s) \d s$  as $\la \ri$ by the bounded convergence theorem.
The same argument can be used to bound $u^\la$ from below using $p_-$ and we finally have
\begin{equation}
\label{eq:squeezeuinfty}
\begin{aligned}
\int_0^t V_-(x,s)\d s &\leq \liminf_{\la\ri} u^\la(x,t) \\
&\leq \cl u(x,t) \leq \limsup_{\la\ri} u^\la(x,t) \leq \int_0^t V_+(x,s) \d s.
\end{aligned}
\end{equation}
for all $(x,t) \in (\Rn \setminus \set{0})\times [0,\infty)$.

Now suppose that $n \geq 3$. The integrals on both sides can be found explicitly as in the proof of Theorem~\ref{th:LPwellposed} and 
$$
\int_0^t V_\pm(x,s) \d s = U_\pm(x,t) := U_{C_* \pm \ve, L_\pm}(x,t) ,
$$
with 
$$
\lim_{|x| \to 0} \frac{ U_{C_* \pm \ve, L_\pm}(x,t)}{|x|^{2-n}} = (C_* \pm \ve) t.
$$
Then we divide \eqref{eq:squeezeuinfty} by $|x|^{2-n}$ and take the limit $|x| \to 0$, which leads to 
$$
(C_* - \ve)t \leq \liminf_{|x|\to 0} \frac{\cl u(x)}{|x|^{2-n}}\leq \limsup_{|x|\to 0} \frac{\cl u(x)}{|x|^{2-n}} \leq  (C_* + \ve) t.
$$
We obtain an analogous result in dimension $n = 2$, after replacing $\abs{x}^{2-n}$ by $- \log \abs{x}$.

The proof is complete because $\ve > 0$ was arbitrary.
\end{proof}
This finishes the proof of Theorem~\ref{th:uconvergence}.
\end{proof}

\section{Convergence of $v$ and the free boundary}

In this final section, we want to show the locally uniform convergence of $v^\la$ and $\Gamma(v^\la)$ as $\la \to \infty$. Define the half-relaxed limits
\begin{align*}
v^*(x,t) &= \limsup_{(y,s),\la\to (x,t),\infty} v^\la(y,s),\\
v_*(x,t) &= \liminf_{(y,s),\la\to (x,t),\infty} v^\la(y,s),\\
\end{align*}
in $\set{|x| \neq 0,\ t \geq 0}$.
Let $V(x,t) = V_{C_*, L}(x,t)$ to be the radially symmetric solution with delta source at 0 from Lemma~\ref{th:radialConvergence},
where $C_*$ is the constant from Lemma \ref{constantSingularity} and $L = \ang{\ov{g}}$. Clearly
$$
\Om_t(V) = \set{x :0 < |x| < \rho(t)}.
$$
And by continuity, $V_* = V = V^*$.

Our goal is proving a theorem similar to \cite{KM}*{Theorem 4.4}.
\begin{theorem}
\label{uniformConvergenceOfGamma}
$\set{\Gamma(v^\la)}_\la$ converges to $\Gamma(V)$ locally uniformly with respect to the Hausdorff distance and $v^\la$ converges locally uniformly to $V$ in $\pth{\Rd \setminus \set{0}} \times [0, \infty)$ as $\la \to \infty$, and furthermore
$$
v_* = v^* = V.
$$
\end{theorem}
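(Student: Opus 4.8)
The plan is to transfer the convergence of the Baiocchi-transformed solutions $u^\la \to U_{C_*,L}$ from Theorem~\ref{th:uconvergence} into convergence of $v^\la$ and its free boundary, exploiting the relation $v = \partial_t^- u$ from Theorem~\ref{th:equivalency} together with the monotonicity of $u$ in $t$. First I would establish the basic inequality $v_* \leq V \leq v^*$ on $\Om_t(V)$ and, separately, control the positivity sets. Since $u^\la(x,t) = \int_0^t v^\la(x,s)\,ds$ and $t \mapsto u^\la(x,t)$ is monotone, the uniform convergence $u^\la \to U$ on compact subsets of $(\Rn\setminus\set 0)\times[0,\infty)$ plus a difference-quotient argument (using $\abs{u^\la(x,t)-u^\la(x,s)} \leq C\abs{t-s}$ on such sets, inherited from \eqref{ulaint}) will pin down $v^\la$ at points of differentiability of $U$ in $t$. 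Because $U = U_{C_*,L}$ is explicit and $\partial_t U(x,t) = V(x,t)$ wherever $\abs x \neq \rho(t)$, this already yields $v_* = v^* = V$ at every $(x,t)$ with $0 < \abs x \neq \rho(t)$; the free boundary $\set{\abs x = \rho(t)}$ is the only place the squeeze can fail, and there one uses semicontinuity of $v^*$, $v_*$ and the fact that $V$ is continuous and vanishes on $\Gamma(V)$ to conclude $v^* = 0 = V$ there as well. This is where I expect to lean on the structure of the problem rather than soft arguments.

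The heart of the matter is the uniform Hausdorff convergence of the free boundaries $\Gamma(v^\la) \to \Gamma(V)$. For the \emph{outer} bound (no free boundary of $v^\la$ escapes a neighborhood of $\Gamma(V)$), I would argue by contradiction: if along a subsequence $\Gamma_{t_k}(v^{\la_k})$ had a point at distance $\geq \de$ outside $B_{\rho(t_k)}$, then by the positivity set inclusions (nondecreasing in $t$, from the monotonicity of $u^\la$) and the convergence $u^\la \to U$, we would find $u^\la > 0$ on a fixed small ball where $U = 0$, a contradiction. For the \emph{inner} bound (no sudden gaps: $\Om_t(V)$ is well-approximated from inside by $\Om_t(v^\la)$), the argument is more delicate because one must rule out the free boundary lagging behind; here I would use the nondegeneracy of $u^\la$ near its free boundary together with $u^\la \to U$ and the strict positivity $U(x,t) > 0$ for $\abs x < \rho(t)$, $t$ fixed. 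Concretely, $\Om_t(u^\la) = \set{u^\la(\cdot,t) > 0} \supset \set{u^\la(\cdot,t) > 0}$ and $\Gamma_t(v^\la) = \Gamma_t(u^\la)$ up to the correspondence, so uniform convergence of $u^\la$ on compacts away from $0$ gives that for any $\de$, eventually $B_{\rho(t)-\de} \setminus B_\de \subset \Om_t(u^\la)$ and $\Om_t(u^\la)\setminus B_\de \subset B_{\rho(t)+\de}$, uniformly for $t \in [\de, T]$; the region $\abs x \leq \de$ is handled separately since it always lies in $\Om_t(v^\la)$ for $t$ bounded below (the source keeps it wet), and small $t$ is handled by Lemma~\ref{th:radialConvergence}-type bounds from Lemma~\ref{subSuperSolutions}.

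With the free boundaries converging in Hausdorff distance, the locally uniform convergence $v^\la \to V$ away from the origin follows by a standard stability argument for the Dirichlet problem: on $\Om_t(v^\la)\setminus K^\la$, $v^\la$ is harmonic with boundary data $\la^{(n-2)/n}$ (resp.\ $\log\mathcal R(\la)$) on $K^\la$ and $0$ on $\Gamma_t(v^\la)$; as $K^\la$ shrinks to the origin the boundary data concentrates to the point-source singularity with strength $C_*$ captured by Lemma~\ref{constantSingularity} and Lemma~\ref{singularityU}, and the outer boundary converges to the sphere $\set{\abs x = \rho(t)}$, so by comparison with the radial sub/supersolutions $p_\pm^\la$ (which converge to $V_{C_*\pm\ve,L_\pm}$ by Lemma~\ref{th:radialConvergence}) sandwiched around $v^\la$, letting $\ve \to 0$ gives $v^\la \to V$ locally uniformly on $(\Rn\setminus\set 0)\times[0,\infty)$. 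I expect the main obstacle to be the inner free-boundary bound: ruling out that $\Om_t(v^\la)$ develops a gap or lags behind $\Om_t(V)$ requires combining the weak-solution nondegeneracy with the $u^\la$ convergence carefully near $\Gamma(V)$, and patching it with the near-origin region where the singularity of $u^\la$ prevents a direct application of the obstacle-problem regularity used in Theorem~\ref{th:uconvergence}; this is exactly the point the introduction flags as requiring an extension of the tools from \cite{KM}.
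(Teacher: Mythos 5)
Your overall strategy is genuinely different from the paper's: you try to pin down $v^\la = \partial_t^- u^\la$ directly by a difference-quotient/integral argument using $u^\la\to U$, and you drive the Hausdorff convergence of the free boundaries entirely off positivity-set inclusions for $u^\la$. The paper instead never differentiates $u^\la$: it bounds the half-relaxed limits $v_*$, $v^*$ by exploiting that $v_*(\cdot,t)$ is superharmonic in $\Omega_t(v_*)$, that $v^*(\cdot,t)$ is subharmonic in $\Rn\setminus\{0\}$ (Remark~\ref{rem:viscosityObservations}), and that both have the point-source singularity $C_*$ at the origin (Lemma~\ref{singularityOfV}); it then squeezes between the explicit profiles $V_{C_*\pm\ve,L_\pm}$, and for the Hausdorff statement uses Lemma~\ref{convergenceOnBoundaryU}, the nondegeneracy estimate of Lemma~\ref{boundFromBelowVLambda}, and Harnack.

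Two steps of your argument have real gaps. First, the claim that uniform convergence $u^\la\to U$ plus the Lipschitz bound $|u^\la(x,t)-u^\la(x,s)|\leq C|t-s|$ ``pins down $v^\la$ at points of differentiability of $U$'' is false as stated: a Lipschitz family can converge uniformly with wildly oscillating derivative. What rescues this is the \emph{time-monotonicity} of $v^\la$ (the positivity set is nondecreasing, $v^\la\equiv\la^{(n-2)/n}$ on $K^\la$, so the maximum principle gives $v^\la(\cdot,t_1)\leq v^\la(\cdot,t_2)$), because then $v^\la(x,t)\leq \ve^{-1}\int_t^{t+\ve}v^\la(x,s)\,ds$ and $v^\la(x,t)\geq\ve^{-1}\int_{t-\ve}^t v^\la(x,s)\,ds$ squeeze via $U$. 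You only invoke monotonicity of $u^\la$ (i.e.\ nonnegativity of $v^\la$), which is weaker; without the stronger fact the step fails. Moreover, this only gives pointwise convergence for fixed $(x,t)$ off the sphere $|x|=\rho(t)$, whereas $v^*$ and $v_*$ are half-relaxed limits over $(y,s,\la)$, so to conclude $v_*=v^*=V$ you additionally need local equicontinuity of $\{v^\la(\cdot,t)\}$, which you can get from interior harmonic estimates but must explicitly use.

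Second, your outer free-boundary bound is not a proof. Knowing $u^\la>0$ on a fixed small ball where $U=0$ is \emph{not} a contradiction with uniform convergence $u^\la\to U$: $u^\la$ could be positive yet uniformly small on that ball. What makes the contradiction work is quadratic nondegeneracy near $\Gamma_t(u^\la)$ (exactly the ingredient behind Lemma~\ref{convergenceOnBoundaryU} and Lemma~\ref{boundFromBelowVLambda}): if $B_r(x_k)\cap\Omega_0^{\la_k}=\emptyset$ and $(x_k,t_k)\in\Gamma(u^{\la_k})$, then $\sup_{B_r(x_k)}u^{\la_k}(\cdot,t_k)\geq c\,r^2$ uniformly, which forces $U\geq c\,r^2>0$ somewhere in $B_r(x_0)$ in the limit. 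You mention nondegeneracy in the inner-bound paragraph but you need it on the outer bound as well; without it that step is unjustified. The paper deploys exactly this through Lemma~\ref{convergenceOnBoundaryU}(b) for the outer inclusion and Lemma~\ref{boundFromBelowVLambda} plus Harnack for the inner one (distinguishing whether a parabolic cylinder $D_r(x_0,t_0)$ sits in $\{v^\la>0\}$ or in $\{v^\la=0\}$). Once these two points are filled, your stability-of-Dirichlet argument for the final $v^\la\to V$ is a reasonable alternative to the paper's squeeze $V\leq v_*\leq v^*\leq V$.
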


We will use the ideas developed in \cite{KM}. The proof is somewhat less technical because $V$, the limit of $v^\la$, is smooth. 

First, we will collect some necessary technical results in the spirit of \cite{KM}:
\begin{remark}
\label{rem:viscosityObservations}
\begin{enumerate}
\item Since $v$ is a viscosity supersolution, $v^\la(\cdot, t)$ is superharmonic in $\Omega_t(v^\la)$ and therefore also $v_*(\cdot, t)$ is superharmonic in $\Omega_t(v_*)$. 
\item $u(\cdot,t)$ is a weak solution of $-\Delta w \leq 0$ for every $t > 0$ and thus $v(x,t) \geq \ov t u (x,t)$.
\item The construction of the viscosity solution $v$ implies that $v^\la$ is subharmonic in $\pth{K^\la}^c$ (see \cite{KM}*{Proof of Theorem 3.1, step 2}). Therefore $v^*$ is subharmonic in $\Rd \setminus \set{0}$. 
\end{enumerate}
\end{remark}

\begin{lemma}[cf. \cite{KM}*{Lemma 4.6}]
\label{convergenceOnBoundaryU}
Suppose $(x_k, t_k) \in \set{u^{\la_k} = 0}$ and $(x_k, t_k, \la_k) \to (x_\infty, t_\infty, \infty)$ with $x_0 \neq 0$. Then the following holds:
\begin{enumerate}
\item $(x_0,t_0) \in \set{U = 0}$.
\item If $x_k \in \Gamma_{t_k} (u^{\la_k})$ then $x_\infty \in \Gamma_{t_\infty} (U)$.
\end{enumerate}
\end{lemma}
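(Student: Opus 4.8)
\textbf{Proof proposal for Lemma~\ref{convergenceOnBoundaryU}.}

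The plan is to exploit the uniform convergence $u^{\la_k} \to U$ on compact subsets of $(\Rd \setminus \set{0}) \times [0,\infty)$ established in Theorem~\ref{th:uconvergence}, together with the nondegeneracy of $U$ near its free boundary, which follows from the explicit formula \eqref{eq:U} (or, more robustly, from the fact that $U$ solves the obstacle problem \eqref{limitProblem} with right-hand side $-L < 0$, so $\Delta U = L > 0$ in $\set{U > 0}$).

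For part (a): since $x_0 \neq 0$, there is $\ve > 0$ with $\abs{x_0} > 2\ve$, so the sequence $(x_k, t_k)$ eventually lies in a fixed compact set $Q_\ve$ on which $u^{\la_k} \to U$ uniformly. Then $U(x_0, t_0) = \lim_k u^{\la_k}(x_k, t_k) = 0$ by continuity of $U$ and the assumption $u^{\la_k}(x_k,t_k) = 0$. This is the easy half.

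For part (b): suppose toward a contradiction that $x_\infty \notin \Gamma_{t_\infty}(U)$. Since $x_\infty \in \set{U = 0}$ by part (a), this means $x_\infty$ lies in the interior of $\set{U(\cdot,t_\infty) = 0}$, i.e.\ there is a ball $B_r(x_\infty)$ with $U = 0$ on $B_r(x_\infty) \times \set{t_\infty}$ (using $\set{U(\cdot,t)=0} = \set{\abs{x} \geq \rho(t)}$ for $t$ near $t_\infty$, continuity of $\rho$ lets me also control a short time interval, so $U = 0$ on $B_r(x_\infty) \times [t_\infty - \si, t_\infty + \si]$ for small $\si, r > 0$). Now I use that $x_k \in \Gamma_{t_k}(u^{\la_k})$: by Remark~\ref{rem:viscosityObservations}(ii) and the nondegeneracy of the obstacle problem \eqref{TOP} — more precisely, by the standard nondegeneracy estimate for solutions of $-\Delta w = f$ with $f \leq -c < 0$ bounded away from the free boundary, which applies since $\ov{g^{\la_k}} \geq \ov M > 0$ and $K^{\la_k} \subset B_{\ve/2}(0)$ — there is a constant $c_0 > 0$, independent of $k$, such that
\begin{align*}
\sup_{B_\delta(x_k)} u^{\la_k}(\cdot, t_k) \geq c_0 \delta^2
\end{align*}
for all small $\delta > 0$ and $k$ large. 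Passing to the limit (uniform convergence on $Q_\ve$) gives $\sup_{B_\delta(x_\infty)} U(\cdot, t_\infty) \geq c_0 \delta^2 > 0$ for every small $\delta$, contradicting $U \equiv 0$ on $B_r(x_\infty) \times \set{t_\infty}$. Hence $x_\infty \in \Gamma_{t_\infty}(U)$.

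The main obstacle is justifying the uniform (in $k$) nondegeneracy estimate at points $x_k$ that are converging but not fixed: I need the nondegeneracy constant to depend only on the lower bound $\ov M$ for $\ov{g^{\la_k}}$ and on a fixed neighborhood $\set{\abs{x} \geq \ve}$ avoiding the shrinking obstacles $K^{\la_k}$, not on $\la_k$ or on the precise location $x_k$. This is where I would lean on \cite{KM}*{Lemma 4.6} and the classical obstacle-problem nondegeneracy (e.g.\ \cite{Rodrigues}), noting that on $\set{\abs{x} \geq \ve/2}$ the equation $-\Delta u^{\la_k} = -\ov{g^{\la_k}} \chi_{(\Om_0^{\la_k})^c} = -\ov{g^{\la_k}}$ holds with a right-hand side uniformly bounded below by $-\ov m < 0$, so the scaling-invariant nondegeneracy applies with a uniform constant. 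The alternative, slightly cleaner route is to avoid nondegeneracy entirely and instead use the viscosity-solution machinery: if $U$ vanished on a full spacetime neighborhood of $(x_\infty, t_\infty)$ while $u^{\la_k}$ had free boundary points there, one derives a contradiction with the strict separation built into the half-relaxed limit and the comparison with the radial barriers of Lemma~\ref{subSuperSolutions}; but the nondegeneracy argument above is the most direct.
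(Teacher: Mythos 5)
Your proposal is essentially correct, and since the paper's own ``proof'' of Lemma~\ref{convergenceOnBoundaryU} is simply a pointer to \cite{KM}*{Lemma 4.6}, you are filling in the argument rather than reproducing one from this paper. Part~(a) is exactly as you say: locally uniform convergence of $u^{\la_k}$ to $U$ away from the origin (Theorem~\ref{th:uconvergence}) plus continuity of $U$ in spacetime forces $U(x_\infty,t_\infty)=0$. Part~(b) via the scaling-invariant nondegeneracy of the obstacle problem,
\begin{align*}
\sup_{B_\delta(x_k)} u^{\la_k}(\cdot,t_k) \geq c_0\,\delta^2,
\end{align*}
with $c_0$ depending only on $n$ and the uniform lower bound $\ov{g^{\la_k}}\geq \ov M>0$, is the standard route for lemmas of this type (and is in the spirit of \cite{KM}); passing to the uniform limit then contradicts $U\equiv 0$ on a spacetime neighborhood of $(x_\infty,t_\infty)$. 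Two small points to tidy. First, there is a sign/direction slip: you want the right-hand side $-\ov{g^{\la_k}}$ bounded \emph{above} by $-\ov M<0$ (equivalently $\Delta u^{\la_k}=\ov{g^{\la_k}}\geq \ov M>0$), which is exactly the hypothesis nondegeneracy needs; ``bounded below by $-\ov m$'' is true but irrelevant. Second, the parenthetical appeal to Remark~\ref{rem:viscosityObservations}(ii) (the inequality $v\geq u/t$) is not what supplies nondegeneracy; what you actually use is that $u^{\la_k}(\cdot,t_k)$ solves the obstacle problem \eqref{TOP} with $\ov{g^{\la_k}}\geq\ov M$ on $\{|x|\geq\ve/2\}\supset B_\delta(x_k)$ once $K^{\la_k},\Om_0^{\la_k}\subset B_{\ve/2}(0)$, plus the classical nondegeneracy result for such obstacle problems. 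Also, when passing to the limit you should enlarge the ball slightly: $B_\delta(x_k)\subset B_{2\delta}(x_\infty)$ for $k$ large, so the conclusion is $\sup_{B_{2\delta}(x_\infty)}U(\cdot,t_\infty)\geq c_0\delta^2$, which still contradicts $U\equiv0$ on a fixed small ball around $x_\infty$.
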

\begin{proof}
See Lemma 4.6 in \cite{KM}.
\end{proof}

We need to identify the singularity of $v_*$ and $v^*$ at $x = 0$.
\begin{lemma}
\label{singularityOfV}
$v^*$ and $v_*$ defined above have a singularity at 0 with
$$
\lim_{\abs{x}\to 0+} \frac{v_*(x, t)}{V(x,t)} = 1, \qquad \lim_{\abs{x}\to 0+} \frac{v^*(x, t)}{V(x,t)} = 1
$$
for each $t > 0$.
\end{lemma}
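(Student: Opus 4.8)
The plan is to obtain the claimed singular behavior of $v_*$ and $v^*$ by squeezing $v^\la$ between the rescaled radially symmetric sub- and supersolutions near the origin, exactly as in the proof of Lemma~\ref{singularityU} but now at the level of $v$ rather than $u$. First I would fix $\ve > 0$ and invoke Lemma~\ref{constantSingularity} to pick $a > 0$ large so that $\abs{P(x)/a^{2-n} - C_*} < \ve/2$ on $\set{\abs x = a}$ (replacing $a^{2-n}$ by $-\log a$ when $n=2$), then use the near-field limit, Theorem~\ref{nearFieldLimit}, to find $t_0$ with $\abs{v(x,t)/a^{2-n} - C_*} < \ve$ on $\set{\abs x = a}$ for $t \geq t_0$. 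As in Lemma~\ref{singularityU}, let $p_\pm$ be the radially symmetric supersolution and subsolution from Lemma~\ref{subSuperSolutions} with boundary values $C_* \pm \ve$ on $\set{\abs x = a}$, with $L_\pm = \ov M, \ov m$ and initial radii $b_\pm$ equal to the max/min of $\Gamma_{t_0}(v)$; the comparison principle then gives $p_-(x,t-t_0) \leq v(x,t) \leq p_+(x,t-t_0)$ for $\abs x \geq a$, $t \geq t_0$.

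Next I would rescale this sandwich. Since $p_\pm^\la \to V_\pm := V_{C_*\pm\ve, L_\pm}$ locally uniformly on $\set{\abs x > 0,\ t \geq 0}$ by Lemma~\ref{th:radialConvergence}, and since the time shift $t_0$ becomes $t_0/\la \to 0$ under the rescaling \eqref{eq:rescaling} (using that $p_\pm$ are genuine solutions so the shift just translates time), the rescaled inequality reads, for $\la$ large and $(x,t)$ with $\abs x$ bounded away from $0$ and $\la^{1/n}\abs x \geq a$ (resp. $\mathcal R(\la)\abs x \geq a$ when $n=2$) and $t$ bounded,
\begin{align*}
p_-^\la(x, t - t_0/\la) \leq v^\la(x,t) \leq p_+^\la(x, t - t_0/\la).
\end{align*}
Taking the half-relaxed limits of both sides and using the locally uniform convergence $p_\pm^\la \to V_\pm$ (which forces the half-relaxed limits of $p_\pm^\la$ to equal $V_\pm$, continuity handling the vanishing time shift), we get
\begin{align*}
V_-(x,t) = V_{C_*-\ve, L_-}(x,t) \leq v_*(x,t) \leq v^*(x,t) \leq V_{C_*+\ve, L_+}(x,t) = V_+(x,t)
\end{align*}
for all $(x,t) \in (\Rn \setminus \set 0) \times [0,\infty)$.

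Finally I would divide through by $V(x,t) = V_{C_*,L}(x,t)$, or more simply by $\abs x^{2-n}$ (by $-\log\abs x$ if $n=2$), and let $\abs x \to 0+$. From the explicit form of $V_{A,L}$ in Lemma~\ref{th:radialConvergence} one has $\lim_{\abs x \to 0} V_{A,L}(x,t)/\abs x^{2-n} = At$ for $n \geq 3$ (and the analogous statement with $-\log\abs x$ for $n=2$), so the sandwich gives
\begin{align*}
(C_* - \ve) t \leq \liminf_{\abs x \to 0+} \frac{v_*(x,t)}{\abs x^{2-n}} \leq \limsup_{\abs x \to 0+} \frac{v^*(x,t)}{\abs x^{2-n}} \leq (C_* + \ve) t,
\end{align*}
and since the same limit $C_* t$ is the coefficient of $\abs x^{2-n}$ in $V(x,t)$, letting $\ve \to 0$ yields both claimed limits equal to $1$. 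The main obstacle I anticipate is bookkeeping around the vanishing time shift $t_0/\la$ and the restriction $\la^{1/n}\abs x \geq a$ in the comparison domain: one must check that for each fixed $x \neq 0$ these constraints are eventually satisfied as $\la \to \infty$, and that the contribution of $v^\la$ on the tiny initial time interval $[0, t_0/\la]$ is harmless — here the bound $v^\la(x,s) \leq C\abs x^{2-n}$ from Lemma~\ref{th:boundaryBound} (which is rescaling-invariant by Lemma~\ref{rescaledBound}) controls it. Everything else is a routine passage to half-relaxed limits.
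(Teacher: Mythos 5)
Your proposal is correct and follows essentially the same route as the paper's proof: comparison of $v$ with the radially symmetric sub- and supersolutions $p_\pm$ on $\set{\abs{x}\geq a,\ t\geq t_0}$, rescaling and passage to half-relaxed limits via the locally uniform convergence $p_\pm^\la \to V_{C_*\pm\ve,\,L_\pm}$ from Lemma~\ref{th:radialConvergence}, and finally letting $\ve\to 0$. The paper does not bother isolating the behavior on $[0,t_0/\la]$ because the half-relaxed limits of $v^\la$ at a fixed $t>0$ only ever sample times $s>t_0/\la$ once $\la$ is large; otherwise your argument and the paper's coincide step for step.
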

\begin{proof}
Fix $\ve > 0$. Using comparison with radially symmetric subsolution $p_-$ and supersolution $p_+$ as in Lemma~\ref{singularityU}, with
\begin{align*}
\frac{p_\pm(x,t)}{a^{2-n}} = C_* \pm \ve \qquad \text{for $\abs{x} = a$},
\end{align*} 
we deduce that there are $a> 0$ and $t_0 > 0$ such
\begin{align*}
p_-(x, t - t_0) \leq v(x,t) \leq p_+(x, t- t_0),\qquad \text{for all $t \geq t_0$, $\abs{x} \geq a$}  
\end{align*} 
Since 
$$
p^\la_\pm(x,t - t_0/\la) \to V_{C_* \pm \ve, L_\pm} (x,t) \qquad \text{as } \la \ri,
$$
locally uniformly on the set $(\Rn \setminus \set{0}) \times [0, \infty)$.
The uniform convergence yields
$$
V_{C_* - \ve, L_-} (x,t) \leq v_*(x,t) \leq v^*(x,t) \leq V_{C_* + \ve, L_+} (x,t).
$$
We have the conclusion because $\ve >0$ was arbitrary.
\end{proof}

\begin{lemma}[cf. \cite{KM}*{Lemma 4.12}] 
\label{boundFromBelowVLambda}
There exists a constant $C_1= C_1(n,M)$ such that if $(x_0, t_0) \in \Omega(v^\la)$ and $B_r(x_0) \cap \Omega_0^\la = \emptyset$, we have
$$
\sup_{B_r(x_0)} v^\la(\cdot, t_0) \geq \frac{C_1 r^2}{t_0}.
$$
\end{lemma}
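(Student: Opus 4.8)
The plan is to derive the bound from the classical non-degeneracy estimate for the obstacle problem satisfied by $u^\la(\cdot,t_0)$, and then transfer it to $v^\la$ using the pointwise inequality $v^\la(x,t)\ge u^\la(x,t)/t$.

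First I would record two preliminary facts. Since $x_0\in B_r(x_0)$ and $B_r(x_0)\cap\Omega_0^\la=\emptyset$, the time $t_0$ must be positive. Moreover $u^\la(x_0,t_0)>0$: if not, then since $u^\la(x_0,\cdot)=\int_0^{\cdot}v^\la(x_0,s)\,ds$ is nondecreasing and vanishes at $t=0$, it would vanish on all of $[0,t_0]$, forcing $v^\la(x_0,t_0)=\partial_t^- u^\la(x_0,t_0)=0$ (using $v^\la=\partial_t^- u^\la$, the rescaled form of Theorem~\ref{th:equivalency}), contrary to $(x_0,t_0)\in\Omega(v^\la)$. So $x_0\in\Omega_{t_0}(u^\la)$.

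The core is the non-degeneracy step for $w:=u^\la(\cdot,t_0)$. Because $B_r(x_0)$ is disjoint from $\Omega_0^\la$ (hence from $K^\la\subset\Omega_0^\la$), the Euler--Lagrange characterization of \eqref{TOP} (the rescaled version of \eqref{eq:formalEL}) gives $\Delta w=\overline{g^\la}$ a.e.\ on $\Omega_{t_0}(u^\la)\cap B_r(x_0)$, and $\overline{g^\la}=1/g^\la\ge 1/M$ by \eqref{boundong}. Hence $h(x):=w(x)-\frac{1}{2Mn}\abs{x-x_0}^2$ is subharmonic on $\Omega_{t_0}(u^\la)\cap B_r(x_0)$; I would apply the maximum principle on the connected component $D$ of that set containing $x_0$. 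Along the part of $\partial D$ interior to $B_r(x_0)$ one has $w=0$ (by continuity, since $w\in W^{2,p}\subset C^1$ away from $K^\la$ by the Proposition in Section~2), so $h\le 0<h(x_0)=w(x_0)$ there, which forces $\max_{\cl D}h$ to be attained at some $z\in\partial B_r(x_0)\cap\cl D$. Thus $w(z)=h(z)+\frac{r^2}{2Mn}\ge h(x_0)+\frac{r^2}{2Mn}\ge\frac{r^2}{2Mn}$.

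Finally, by Remark~\ref{rem:viscosityObservations}(2) we have $v(x,t)\ge u(x,t)/t$, and propagating this through the rescaling \eqref{rescaledU} (and its $n=2$ analogue) yields $v^\la(x,t)\ge u^\la(x,t)/t$ for every $t>0$. Applied at $(z,t_0)$ this gives $v^\la(z,t_0)\ge\frac{r^2}{2Mn\,t_0}$; repeating the whole argument with $r'<r$ in place of $r$ and letting $r'\uparrow r$ moves the witness point into the open ball, so $\sup_{B_r(x_0)}v^\la(\cdot,t_0)\ge\frac{r^2}{2Mn\,t_0}$, i.e.\ the lemma holds with $C_1=\frac{1}{2Mn}$. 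I do not anticipate a real obstacle here: this is essentially the textbook obstacle-problem non-degeneracy estimate combined with the $v\ge u/t$ bound. The only points that require attention are locating $x_0$ inside the positivity set of $u^\la(\cdot,t_0)$ (so the comparison paraboloid can be centred there) and restricting the maximum principle to the connected component of $x_0$, since $\Omega_{t_0}(u^\la)\cap B_r(x_0)$ need be neither connected nor smooth.
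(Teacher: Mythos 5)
Your proof is correct and takes essentially the same approach as the paper: the paper's argument is simply to combine the pointwise bound $v^\la(\cdot,t_0)\ge u^\la(\cdot,t_0)/t_0$ from Remark~\ref{rem:viscosityObservations} with a citation of the non-degeneracy estimate for the obstacle problem (Lemma~3.3 in \cite{KM}). Your paraboloid/maximum-principle argument on the connected component of $\Omega_{t_0}(u^\la)\cap B_r(x_0)$ containing $x_0$ is exactly the standard proof of that cited lemma, using $\Delta u^\la(\cdot,t_0)=\ov{g^\la}\ge 1/M$ on the positivity set away from $\Omega_0^\la$, and your preliminary check that $(x_0,t_0)\in\Omega(v^\la)$ forces $u^\la(x_0,t_0)>0$ is a worthwhile detail that the paper leaves implicit.
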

\begin{proof}
Recall that $v^\la(\cdot, t_0) \geq u^\la(\cdot, t_0) /t_0$ (Remark~\ref{rem:viscosityObservations}) and use Lemma 3.3 in \cite{KM}.
\end{proof}

\begin{lemma}[cf. \cite{KM}*{Lemma 4.10}]
\label{domainVstarV}
The following inclusion holds:
$$
\Omega(V) \subset \Omega(v_*).
$$
Moreover,
$$
v_* \geq V.
$$
\end{lemma}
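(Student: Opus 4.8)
The plan is to show that $v_*$ is a viscosity supersolution of \eqref{HSL} (in the rescaled sense, with the delta source at the origin) and then use the comparison principle against $V$, which by construction is the maximal subsolution with the matching singular profile at $0$. First I would record that, being a $\liminf$ of supersolutions $v^\la$ of the rescaled problems \eqref{THSL} (or \eqref{THSL2}), the half-relaxed limit $v_*$ is itself a viscosity supersolution of the homogenized free boundary problem $v_t = \ov{L}\abs{Dv}^2$ on $\Gamma(v_*)$ away from the origin; this is the standard stability of viscosity supersolutions under half-relaxed limits, using that $g^\la(x) = g(\la^{1/n}x) \ge m$ and that the boundary velocity coefficients average out in the test-function inequality (as in \cite{KM}). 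Combined with Remark~\ref{rem:viscosityObservations}(a) (superharmonicity of $v_*(\cdot,t)$ in its positivity set) and Lemma~\ref{singularityOfV} (the exact singular profile $v_*/V \to 1$ as $\abs{x}\to 0$), we have a supersolution of \eqref{HSL} with a point source of the correct strength $C_*$ and correct velocity constant $L = \ang{\ov g}$.

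Next I would invoke the comparison principle for the Hele-Shaw problem with a point source. Fix $t>0$ and a small $\de > 0$; for $\abs{x} = \de$ we have $v_*(x,s) \ge (1-o(1)) V(x,s)$ uniformly for $s$ in a compact time interval by Lemma~\ref{singularityOfV}, and at time $0$ both $V$ and $v_*$ vanish away from the origin. Thus on the parabolic boundary of the exterior region $\set{\abs{x} > \de}$ the function $V$ (shifted/shrunk by an arbitrarily small amount in $A$ and in time) lies strictly below $v_*$, so Theorem~2.6-type comparison (the comparison principle quoted after the viscosity definitions, applied on $\set{\abs{x}>\de}$ with inner boundary data from $v_*$) gives $V^{(\de)} \le v_*$ on $\set{\abs{x} > \de}$, where $V^{(\de)}$ is the radial solution with slightly smaller source. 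Letting first the perturbation and then $\de \to 0$ yields $v_* \ge V$ everywhere in $\pth{\Rd \setminus \set 0}\times[0,\infty)$, and in particular $\Omega(V) \subset \Omega(v_*)$.

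Alternatively — and this is probably the cleaner route given what is already available — I would argue through the Baiocchi transforms rather than directly with viscosity solutions. By Theorem~\ref{th:uconvergence} we already know $u^\la \to U_{C_*, L}$ locally uniformly away from the origin, and $U_{C_*,L} = U$ is the Baiocchi transform of $V$ with $\Omega_t(U) = \set{0 < \abs{x} < \rho(t)}$. Fix $(x_0, t_0) \in \Omega(V)$, i.e. $0 < \abs{x_0} < \rho(t_0)$; then $U(x_0, t_0) > 0$, and choosing $r>0$ with $B_r(x_0) \subset \Omega_{t_0}(U) \setminus \set 0$ and $B_r(x_0)\cap \Omega_0^\la = \es$ for large $\la$, the uniform convergence $u^\la \to U$ forces $u^\la > 0$ on $B_r(x_0)$ for $\la$ large, hence $(x_0,t_0) \in \Omega(v^\la)$; applying Lemma~\ref{boundFromBelowVLambda} gives $\sup_{B_r(x_0)} v^\la(\cdot, t_0) \ge C_1 r^2 / t_0 > 0$, and since $v^\la(\cdot,t_0)$ is superharmonic and positive on $B_r(x_0)$, a Harnack-type lower bound propagates this to $v^\la(x_0, t_0) \ge c(x_0, t_0, r) > 0$ uniformly in large $\la$, whence $v_*(x_0,t_0) > 0$. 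This proves $\Omega(V) \subset \Omega(v_*)$. For the quantitative bound $v_* \ge V$, on each compact subset of $\Omega(V)$ one combines the superharmonicity of $v_*(\cdot,t)$, the boundary behavior $v_* = V$ near $\Gamma_t(V)$ (from Lemma~\ref{convergenceOnBoundaryU}, which identifies the limiting free boundary) and the singular profile near $0$ (Lemma~\ref{singularityOfV}), so that $v_*(\cdot, t)$ and $V(\cdot,t)$ are two superharmonic functions on $\Omega_t(V)\setminus\set 0$ with $v_* \ge V$ on the boundary, giving $v_* \ge V$ by the maximum principle.

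The main obstacle is the behavior near the origin: $V$ and $v_*$ are both singular there, the comparison principle as stated is for bounded data, and the inner "boundary" is a point rather than a surface. The device that resolves this is to excise a small ball $B_\de(0)$, use the precise asymptotic matching from Lemma~\ref{singularityOfV} to control the two functions on $\partial B_\de$, perform the comparison on the exterior domain, and only then send $\de \to 0$; a secondary technical point is upgrading the "sup over a ball" lower bound from Lemma~\ref{boundFromBelowVLambda} to a pointwise lower bound via a Harnack inequality for the positive superharmonic functions $v^\la(\cdot, t_0)$ on an interior ball, which requires keeping that ball uniformly away from both the origin and $\Gamma_{t_0}(v^\la)$.
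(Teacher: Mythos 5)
The paper's proof is shorter and more direct than both of your routes. For the inclusion $\Omega(V)\subset\Omega(v_*)$, the paper simply invokes Remark~\ref{rem:viscosityObservations}(b), namely $v(x,t)\geq u(x,t)/t$, notes this is preserved under rescaling, and passes to the half-relaxed limit using $u^\la\to U$ locally uniformly to get $v_*\geq U/t$; the inclusion follows because $\Omega(V)=\Omega(U)$. Your second route arrives at the same place but travels through Lemma~\ref{boundFromBelowVLambda} and Harnack, which is a detour: Lemma~\ref{boundFromBelowVLambda} is itself derived from $v^\la\geq u^\la/t$, so once you have $u^\la(x_0,t_0)>0$ from the uniform convergence, the pointwise bound $v^\la(x_0,t_0)\geq u^\la(x_0,t_0)/t_0$ already gives a positive lower limit without any Harnack step. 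Your first route (showing $v_*$ is a viscosity supersolution of the homogenized free boundary problem by half-relaxed-limit stability, then comparing) would require re-deriving the homogenization of the boundary velocity within the stability argument, which is a much bigger undertaking than the lemma warrants.

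For $v_*\geq V$, your route 2 as written has a genuine gap at the origin. You claim to apply the maximum principle to $v_*-V$ on $\Omega_t(V)\setminus B_\de$ by controlling the boundary, but Lemma~\ref{singularityOfV} only gives $v_*/V\to 1$ as $\abs{x}\to 0$, which does not imply $v_*\geq V$ on $\partial B_\de$ — the ratio could approach $1$ from below. Excising $B_\de$ and sending $\de\to 0$, as you describe in your last paragraph, does not cure this: you still need an honest one-sided inequality on $\partial B_\de$ for each fixed $\de$. The paper's device (which you do gesture at in route 1 via ``$V^{(\de)}$ with slightly smaller source,'' but drop in the ``cleaner'' route 2) is to compare $v_*$ not with $V=V_{C_*,L}$ but with the shrunken profile $V_{C_*-\ve,L}$: since $V_{C_*-\ve,L}/V\to(C_*-\ve)/C_*<1$ at the origin, the asymptotic $v_*/V\to 1$ does give $v_*\geq V_{C_*-\ve,L}$ on $\partial B_\de$ for $\de$ small, and since $\Omega(V_{C_*-\ve,L})\subset\Omega(V)\subset\Omega(v_*)$ the superharmonic comparison closes on $\Omega_t(V_{C_*-\ve,L})\setminus B_\de$; one then sends $\ve\to 0$. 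Finally, your appeal to Lemma~\ref{convergenceOnBoundaryU} to get ``$v_*=V$ near $\Gamma_t(V)$'' is both unjustified (that lemma concerns free boundary points of the $u$-variable, not values of $v_*$) and unnecessary: on the outer boundary of the comparison region all you need is $v_*\geq 0=V_{C_*-\ve,L}$, which is trivial.
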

\begin{proof}
Recall that $v(x,t) \geq \ov t u(x,t)$ for all $t > 0$, $x \neq 0$ and that the inequality is preserved under rescaling \eqref{eq:rescaling} and \eqref{rescaledU}.
As $u^\la$ converges to $U$ uniformly on sets $\set{|x| > \ve} \times [0, T]$, we have
$$
v_*(x,t) \geq \ov t U(x,t)
$$
and we see that $\Omega(V) = \Omega(U) \subset \Omega(v_*)$.

Recall that $v_*(\cdot, t)$ is superharmonic in $\Omega_t(v_*)$ for each $t$ (Remark~\ref{rem:viscosityObservations}) and behaves at zero as $\sim V(\cdot, t)$ by Lemma~\ref{singularityOfV}. For fixed $\ve > 0$, comparison of $v_*(\cdot, t)$ and $V_{C_* - \ve, L} (\cdot, t)$ yields
$$
v_*(x,t) \geq V_{C_* - \ve, L}(x,t), \quad \text{for every } t > 0,
$$
because  $\Omega(V_{C_* - \ve, L}) \subset \Omega(V)$. We conclude by taking the limit $\ve \to 0$.
\end{proof}

\begin{lemma}[cf. \cite{KM}*{Lemma 4.13 (ii)}]
\label{boundaryVstarV}
The following inclusion of boundaries holds:
$$\Gamma(v^*) \subset \Gamma(V).$$
\end{lemma}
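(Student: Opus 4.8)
The plan is to deduce $\Gamma(v^*)\subset\Gamma(V)$ from the single inclusion of positivity sets
$$
\Omega(v^*)\subset\cl{\Omega(V)}=\set{(x,t):\abs{x}\le\rho(t)}.
$$
Granting this, fix $(x_0,t_0)\in\Gamma(v^*)=\partial\Omega(v^*)$ with $x_0\neq0$. Then $(x_0,t_0)\in\cl{\Omega(v^*)}\subset\cl{\Omega(V)}$. On the other hand, Lemma~\ref{domainVstarV} and the trivial bound $v_*\le v^*$ give $\Omega(V)\subset\Omega(v_*)\subset\Omega(v^*)$, and since $\Omega(V)=\set{(x,t):0<\abs{x}<\rho(t)}$ is open, it lies in the interior of $\Omega(v^*)$; as $(x_0,t_0)$ is a boundary point of $\Omega(v^*)$, it cannot belong to $\Omega(V)$. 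Hence $(x_0,t_0)\in\cl{\Omega(V)}\setminus\Omega(V)=\Gamma(V)$. (Here $x_0\neq0$ is automatic because $\Gamma(v^*)$ is considered in $\set{\abs{x}\neq0}$, and I use only that $\Omega(v^*)$ need not be open, which is harmless.)

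Thus the real content is: \emph{if $\abs{x_0}>\rho(t_0)$ and $x_0\neq0$, then $v^*(x_0,t_0)=0$.} I would argue by contradiction. If $v^*(x_0,t_0)>0$, there is a sequence $(x_k,s_k,\la_k)\to(x_0,t_0,\infty)$ with $v^{\la_k}(x_k,s_k)\ge c>0$ for some fixed $c$. The structural input I would exploit is time-monotonicity of the pressure: by Theorem~\ref{th:equivalency} (rescaled), the wetted set $\Omega_s(v^{\la_k})$ is nondecreasing in $s$, and on it $v^{\la_k}(\cdot,s)$ is the harmonic function in $\Omega_s(v^{\la_k})\setminus K^{\la_k}$ with the fixed (large) value on $\partial K^{\la_k}$ and $0$ on $\Gamma_s(v^{\la_k})$; comparing this harmonic function for two times $s_k\le s$ on the nested domain $\Omega_{s_k}(v^{\la_k})\setminus K^{\la_k}$ via the maximum principle (the two data agree on $\partial K^{\la_k}$, while on $\Gamma_{s_k}$ one is $0$ and the other is $\ge0$) yields $v^{\la_k}(x,\cdot)$ nondecreasing for every $x\notin K^{\la_k}$. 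Since $x_k\to x_0\neq0$ while $K^{\la_k}\to\set{0}$, for large $k$ we have $x_k\notin K^{\la_k}$, hence $v^{\la_k}(x_k,s)\ge c$ for all $s\ge s_k$.

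Now integrate: $u^{\la_k}(x_k,t)=\int_0^t v^{\la_k}(x_k,s)\d s\ge c\,(t-s_k)$ for $t\ge s_k$. Choose $\eta>0$ small enough that $\rho(t_0+\eta)\le\abs{x_0}$ (possible since $\rho$ is continuous increasing and $\rho(t_0)<\abs{x_0}$; if $t_0=0$ take any $\eta$ with $\rho(\eta)\le\abs{x_0}$). Letting $k\to\infty$, $\liminf_k u^{\la_k}(x_k,t_0+\eta)\ge c\eta>0$; but Theorem~\ref{th:uconvergence} gives $u^{\la_k}\to U=U_{C_*,L}$ locally uniformly off the origin, and with continuity of $U$ away from $0$ this forces $u^{\la_k}(x_k,t_0+\eta)\to U(x_0,t_0+\eta)$. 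Finally $U(x_0,t_0+\eta)=\int_0^{t_0+\eta}V_{C_*,L}(x_0,s)\d s=0$ because $\abs{x_0}\ge\rho(t_0+\eta)\ge\rho(s)$ for all $s\le t_0+\eta$. This contradiction proves $v^*(x_0,t_0)=0$, i.e. $\Omega(v^*)\subset\cl{\Omega(V)}$, and with the reduction above finishes the proof. The argument is uniform in the dimension: in $n=2$ one only replaces $\abs{x}^{2-n}$, $\la^{1/n}$ by $-\log\abs{x}$, $\mathcal R(\la)$, and all the inputs ($K^\la\to\set0$, Theorems~\ref{th:equivalency} and~\ref{th:uconvergence}, continuity of $U$) hold verbatim.

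The step I expect to be the main obstacle is exactly this passage from the $C^0$–convergence of $u^\la$ — which only controls the \emph{time average} of the pressure — to the pointwise vanishing of $v^*$ outside $\cl{\Omega(V)}$. The radial comparison of Lemma~\ref{th:boundaryBound} is of no help here, since it only confines the free boundary between the radii of the radial sub- and supersolution (governed by $m$ and $M$), not at $\rho=\rho_{\ang{\ov g}}$. The time-monotonicity of $v^{\la}$ is what bridges the gap: it upgrades the \emph{one} positive value $v^{\la_k}(x_k,s_k)\ge c$ into the lower bound $u^{\la_k}(x_k,t)\ge c(t-s_k)$, which then collides with $U\equiv0$ off $\cl{\Omega(V)}$. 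A secondary point requiring care is that $v^*$ is merely upper semicontinuous, so $\Omega(v^*)$ may fail to be open; but the reduction above uses only that $\Omega(V)$ is open and sits inside $\Omega(v^*)$, so nothing is lost.
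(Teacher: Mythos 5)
Your argument is correct, and it reaches the conclusion by a genuinely different route than the paper. The paper's proof is a pointer to KM's Lemma~4.13(ii), driven by Lemma~\ref{convergenceOnBoundaryU}: one produces a sequence of free-boundary points $(x_k,t_k)\in\Gamma_{t_k}(u^{\la_k})$ accumulating at a given $(x_0,t_0)\in\Gamma(v^*)$, and the convergence lemma (whose content is the obstacle-problem nondegeneracy for $u^\la$) forces $(x_0,t_0)\in\Gamma(U)=\Gamma(V)$; the stronger set inclusion $\Omega(v^*)\subset\cl{\Omega(V)}$ is then extracted only afterwards, inside the proof of Theorem~\ref{uniformConvergenceOfGamma}, by a topological argument using boundedness of $\Omega_t(v^*)$ and simple connectedness of $\Omega(V)$. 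You reverse the order: you prove $\Omega(v^*)\subset\cl{\Omega(V)}$ first and read off the boundary inclusion from $\Omega(V)\subset\Omega(v_*)\subset\Omega(v^*)\subset\cl{\Omega(V)}$ together with $\Omega(V)$ open — a deduction that is indeed sound even though $\Omega(v^*)$ need not be open. Your key structural ingredient, replacing Lemma~\ref{convergenceOnBoundaryU}, is the time-monotonicity of the pressure $v^\la(x,\cdot)$, which you derive correctly from Theorem~\ref{th:equivalency}: $\Omega_t(v^\la)=\Omega_t(u^\la)$ is nondecreasing, $v^\la(\cdot,t)$ is harmonic there with fixed data on $\partial K^\la$, and comparison on the smaller domain gives the monotonicity. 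This is a standard fact about Hele-Shaw but is not stated in the paper; you are right to justify it. The monotonicity upgrades the single value $v^{\la_k}(x_k,s_k)\ge c$ into the linear lower bound $u^{\la_k}(x_k,t)\ge c(t-s_k)$, which then contradicts $u^{\la_k}\to U\equiv0$ off $\cl{\Omega(V)}$ (Theorem~\ref{th:uconvergence}). Net effect: you trade KM's nondegeneracy-based boundary-accumulation lemma for pressure monotonicity plus the $C^0$ convergence of $u^\la$, and you obtain $\Omega(v^*)\subset\cl{\Omega(V)}$ in one stroke instead of deferring it; the paper's route, on the other hand, reuses machinery (Lemma~\ref{convergenceOnBoundaryU}) that it needs elsewhere anyway.
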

\begin{proof}
Proof as in \cite{KM}, with the use of Lemma~\ref{convergenceOnBoundaryU} above.
\end{proof}

\begin{proof}[Proof of Theorem \ref{uniformConvergenceOfGamma}]
Since $\Omega_t(v^*)$ is bounded for every time $t > 0$ by Lemma~\ref{th:boundaryBound}, and $\Omega(V)$ is a simply connected set, Lemma~\ref{boundaryVstarV} also implies that in fact $\Omega(v^*) \subset \cl{\Omega(V)}$. 
In particular, we see that
$$
\cl{\Omega(v^*)} \subset \cl{\Omega(V)} \subset \Omega(V_{C_* + \ve, L}) \quad \text{for all } \ve > 0.
$$
Recall that $v^*(\cdot, t)$ is subharmonic in $R^n \setminus \set{0}$ for every $t > 0$ (Remark~\ref{rem:viscosityObservations}). 
Since $\lim_{|x| \to 0} \frac{v^*(x, t)}{V(x,t)}  = 1$ for all $t \geq 0$, we see that $v^*(x,t) \leq V_{C_* + \ve, L}(x,t)$ for every $\ve > 0$. After sending $\ve \to 0+$, we recover
$$
V(x,t) \leq v_*(x,t) \leq v^*(x,t) \leq V(x,t)
$$
and
$$
\Gamma(v_*) = \Gamma(v^*) = \Gamma(V).
$$

To prove the local uniform convergence of boundaries with respect to the Hausdorff distance, fix $0 < t_1 < t_2$ and let us denote
$$
\Gamma^\la = \Gamma(v^\la) \cap \set{t_1 \leq t \leq t_2}, \qquad \Gamma^\infty = \Gamma(V) \cap \set{t_1 \leq t \leq t_2}.
$$
We define the $\de$-neighborhood of a set $A \subset \Rd \times \R$,
\begin{align*}
U_\de(A) := \set{(x,t) : \dist((x,t), A) < \de}.
\end{align*}
We need to show that for every $\de > 0$ there exists $\la_0 > 0$ such that 
\begin{equation}
\label{eq:inclusionLambdaInfty}
\Gamma^\la \subset U_\de(\Gamma^\infty) \qquad \forall \la \geq \la_0.
\end{equation}
and
\begin{equation}
\label{eq:inclusionInftyLambda}
\Gamma^\infty \subset U_\de(\Gamma^\la) \qquad \forall \la \geq \la_0.
\end{equation}

First, suppose that for some $\de > 0$, the inclusion \eqref{eq:inclusionLambdaInfty} fails infinitely often. In other words, suppose that there is $\de > 0$ and a sequence $\seq{\la}k$ with $\la_k \ri$ such that
$$
\Gamma^{\la_k} \cap \pth{U_\de(\Gamma^\infty)}^c \neq \emptyset.
$$
Accordingly, we choose a sequence of points $(x_k, t_k) \in \Gamma^{\la_k}$ whose distance from $\Gamma^\infty$ is bounded by $\de$ from below, 
\begin{equation}
\label{eq:pointDeltaDistance}
\dist(\Gamma^\infty,(x_k, t_k)) \geq \de.
\end{equation} 
Since the union $\bigcup_\la \Gamma^\la$ is bounded (recall Lemma~\ref{th:boundaryBound}), there is a converging subsequence $(x_{k_j}, t_{k_j}) \to (x_0, t_0)$ as $j \ri$. Moreover, $x_0 \neq 0$ by Corollary~\ref{th:boundaryBound} and also $t_1 \leq t_0 \leq t_2$. Hence Lemma~\ref{convergenceOnBoundaryU}(b) implies that $(x_0, t_0) \in \Gamma(U) = \Gamma(V)$ and therefore $(x_0,t_0) \in \Gamma^\infty$. But that is a  contradiction with \eqref{eq:pointDeltaDistance}.

To prove the second inclusion, \eqref{eq:inclusionInftyLambda}, we start by proving a pointwise result. Suppose that there is $\de > 0$, a point $(x_0, t_0) \in \Gamma^\infty$ and a sequence $\seq{\la}k$ with $\la_k \ri$ such that $\dist((x_0,t_0), \Gamma^{\la_k}) \geq \de/2$ for every $k$. That means that there is $r > 0$ such that the spacetime cylinder $D_r(x_0, t_0) := B_r(x_0) \times [t_0 - r, t_0 +r]$ is either
$$
D_r(x_0,t_0) \subset \set{v^{\la_k} > 0},
$$
or 
$$
D_r(x_0,t_0) \subset \set{v^{\la_k} = 0}.
$$
At least one of the inclusions holds for infinitely many $k$. Therefore we arrive at two possibilities:
\begin{enumerate}
\item
In the case of the first inclusion, Lemma \ref{boundFromBelowVLambda} and Harnack's inequality in $B_{r/2}(x_0)$ for all $t \in [t_0 - r, t_0 + r]$ yield
$$
\frac{C_1 r^2}{4t} \leq \sup_{B_{r/2}(x_0)} v^{\la_k}(\cdot, t) \leq C_2 \inf_{B_{r/2}(x_0)} v^{\la_k}(\cdot,t),
$$
where $C_2$ does not depend on $\la_k$ and $t$. We conclude that $v^* > 0$ in $B_{r/2}(x_0) \times [t_0 - r, t_0 + r]$ and that is a contradiction with $(x_0, t_0) \in \Gamma^\infty \subset \Gamma(v^*)$.
\item
In the second case, we clearly have that $v_* = 0$ in $D_r(x_0, t_0)$ but that is a contradiction with $(x_0, t_0) \in \Gamma^\infty \subset \Gamma(v_*)$.
\end{enumerate}
We conclude that for each $\de > 0$ and $(x_0,t_0) \in \Gamma^\infty$ there is $\la_0$, depending on $(x_0,t_0)$, such that $\dist((x_0, t_0), \Gamma^\la) < \de/2$ for all $\la \geq \la_0$. But if $(x,t) \in \Gamma^\infty$ such that $\abs{(x,t) - (x_0,t_0)} < \de/2$, we have 
$$
\dist((x, t), \Gamma^\la) \leq \dist((x_0, t_0), \Gamma^\la) + \abs{(x,t) - (x_0,t_0)} < \de \quad \forall \la \geq \la_0.
$$
The set $\Gamma^\infty$ is compact and therefore can be covered by finitely many open sets,
$$
\Gamma^\infty \subset \bigcup_{1\leq j \leq j_0} \set{(x,t) : \abs{(x,t) - (x^j_0,t^j_0)} < \de/2}.
$$
 We can set $\la_0 = \max_{1\leq j \leq j_0} \la^j_0$ and we conclude that
$$
\Gamma^\infty \subset U_\de(\Gamma^\la) \qquad \forall \la \geq \la_0.
$$

\end{proof}

\textit{Acknowledgments.} This work is part of my doctoral research. I would like to sincerely thank my thesis advisor, Inwon C. Kim, for suggesting the problem and for her invaluable advice and support.

\normalsize
\baselineskip=17pt

\section{References}
\begin{biblist}

\bib{CSW}{article}{
   author={Caffarelli, L. A.},
   author={Souganidis, P. E.},
   author={Wang, L.},
   title={Homogenization of fully nonlinear, uniformly elliptic and
   parabolic partial differential equations in stationary ergodic media},
   journal={Comm. Pure Appl. Math.},
   volume={58},
   date={2005},
   number={3},
   pages={319--361},
}

\bib{CJ}{article}{
  author = 	 {\v{C}\'i\v{z}ek, P.},
  author = 	 {Janovsk\'{y}, V.},
  title = 	 {Hele-Shaw flow model of the injection by a point source},
  journal =	 {Proc. of the Royal Soc. of Edinburgh},
  date = 	 {1981},
  volume = {91A},
  pages = {147--159}
}

\bib{EJ}{article}{
  author = 	 {Elliott, C. M.},
  author = 	 {Janovsk\'{y}, V.},
  title = 	 {A variational inequality approach to Hele-Shaw flow with a moving boundary},
  journal =	 {Proc. of the Royal Soc. of Edinburgh},
  date = 	 {1981},
  volume = {88A},
  pages = {93--107}
}

\bib{Evans}{book}{
 	author = 	 {Evans, L. C.},
  title = 	 {Partial Differential Equations},
  series =		{Graduate Studies in Mathematics},
	volume = 	{19},
  date = 	 {1998},
  publisher = {American Mathematical Society},
	address = {Providence, Rhode Island}
}

\bib{FK}{article}{
  author = 	 {Friedman, A.},
  author = 	 {Kinderlehrer, D.},
  title = 	 {A One Phase Stefan Problem},
  journal =	 {Indiana Univ. Math. J},
  date = 	 {1975},
  volume = {24},
  number = {11},
  pages = {1005--1035}
}

\bib{HS}{article}{
  author = 	 {Hele-Shaw, H. S.},
  title = 	 {The flow of water},
  journal =	 {Nature},
  volume = {58},
  date = 	 {1898},
pages = {34--36}
}

\bib{Kim07}{article}{
  author = 	 {Kim, I. C.},
  title = 	 {Homogenization of the free boundary velocity},
  journal =	 {Arch. Ration. Mech. Anal.},
  volume = {185},
  date = 	 {2007},
pages = {69 -- 103}
}

\bib{KM}{article}{
  author = 	 {Kim, I. C.},
  author = 	 {Mellet, A.},
  title = 	 {Homogenization of a Hele-Shaw type problem in periodic and random media},
  journal =	 {Arch. Ration. Mech. Anal.},
  date = 	 {2009},
volume = {194},
number = {2},
pages= {507--530}
}

\bib{LU}{book}{
 	author = 	 {Ladyzhenskaya, O. A.},
   author = {Ural'tseva, N. N.},
  title = 	 {Linear and Quasilinear Elliptic Equations},
  series =		{Mathematics in Science and Engineering},
	volume = 	{46},
  date = 	 {1968},
  publisher = {Academic Press},
	address = {New York and London},
editor = {Ehrenpreis, L.}
}

\bib{MG}{article}{
  author = 	 {McGeough, J. A.},
  author = 	 {Rasmussen, H.},
  title = 	 {On the derivation of the quasi-steady model in electrochemical machining},
  journal =	 {J. Inst. Math. Appl.},
  date = 	 {1974},
pages = {13--21},
volume = {13},
number = {1}
}

\bib{PV}{article}{
   author={Papanicolaou, G. C.},
   author={Varadhan, S. R. S.},
   title={Boundary value problems with rapidly oscillating random
   coefficients},
   conference={
      title={Random fields, Vol. I, II},
      address={Esztergom},
      date={1979},
   },
   book={
      series={Colloq. Math. Soc. J\'anos Bolyai},
      volume={27},
      publisher={North-Holland},
      place={Amsterdam},
   },
   date={1981},
   pages={835--873},
}

\bib{P}{article}{
   author={Primicerio, M.},
   title={Stefan-like problems with space-dependent latent heat},
   journal={Meccanica},
   volume={5},
   date={1970},
   pages={187--190},
}

\bib{QV}{article}{
  author = 	 {Quir\'os, F.},
  author = 	 {V\'azquez, J. L.},
  title = 	 {Asymptotic convergence of the Stefan problem to Hele-Shaw},
  journal =	 {Transactions of the American Mathematical Society},
  date = 	 {2000},
  volume =	 {353},
  number = {2},
  pages = {609--634}
}

\bib{Richardson}{article}{
 	author = 	 {Richardson, S.},
 	title = 	 {Hele Shaw flows with a free boundary produced by the injection of fluid into a narrow channel},
journal = {J. Fluid Mech.},
 	volume = {56},
number = {4},
  date = 	 {1972},
pages = {609--618}
}

\bib{Rodrigues}{book}{
 	author = 	 {Rodrigues, J.-F.},
 	title = 	 {Obstacle problems in mathematical physics},
 	series = {North-Holland mathematics studies},
 	volume = {134},
  date = 	 {1987},
  publisher = {Elsevier Science Publishers},
}

\bib{Rou}{article}{
   author={Roub{\'{\i}}{\v{c}}ek, T.},
   title={The Stefan problem in heterogeneous media},
   journal={Ann. Inst. H. Poincar\'e Anal. Non Lin\'eaire},
   volume={6},
   date={1989},
   number={6},
   pages={481--501},
}

\end{biblist}

This is a preprint of an article submitted for consideration in the Interfaces and Free Boundaries, which is available online at:

\verb+http://www.ems-ph.org/journals/journal.php?jrn=ifb+

\end{document}